\newtheorem{theorem}{Theorem}
\newtheorem{lemma}[theorem]{Lemma}
\newtheorem{corollary}[theorem]{Corollary}
\theoremstyle{definition}
\newtheorem{remark}[theorem]{Remark}
\newcommand{\cref}[1]{Corollary~\ref{c.#1}}
\numberwithin{equation}{section}
\numberwithin{theorem}{section}
\newcommand{\Z}{\mathbb{Z}}
\newcommand{\R}{\mathbb{R}}
\newcommand{\cS}{\mathcal{S}}
\newcommand{\cD}{\mathcal{D}}
\newcommand{\scD}{\mathscr{D}}
\newcommand{\cK}{\mathcal{K}}
\newcommand{\cR}{\mathcal{R}}
\renewcommand{\tilde}{\widetilde}
\newcommand{\ol}{\overline}
\newcommand{\eps}{\varepsilon}
\newcommand{\fp}{\mathfrak{p}}
\title{A unified homogenization approach for the Dirichlet problem in Perforated Domains}
\author{Wenjia Jing}
\address{Yau Mathematical Sciences Center,
  Tsinghua University,
  No.1 Tsinghua Yuan,
  Beijing 100084, People's Republic of China}
\email{wjjing@tsinghua.edu.cn}
\date{January 24, 2019}
\begin{document}

\begin{abstract}

We revisit the periodic homogenization of Dirichlet problems for the Laplace operator in perforated domains, and establish a unified proof that works for different regimes of hole-cell ratios, that is the ratio between the scaling factor of the holes and that of the periodic cells. The approach is then made quantitative and it yields correctors and error estimates for vanishing hole-cell ratios. For positive volume fraction of holes, the approach is just the standard oscillating test function method; for vanishing volume fraction of holes, we study asymptotic behaviors of a properly rescaled cell problems and use them to build oscillating test functions. Our method reveals how the different regimes are intrinsically connected through the cell problems and the connection with periodic layer potentials. 

\smallskip

\noindent{\bf Key words}: periodic homogenization, perforated domain, strange term coming from nowhere, adaptive oscillating test function method, Newtonian capacity, large box limit.

\smallskip

\noindent{\bf Mathematics subject classification (MSC 2010)}: 35B27, 35J08

\end{abstract}

\maketitle


\section{Introduction}

In this article we revisit the periodic homogenization problem in perforated domains that are formed by removing a periodic array of small holes from a fixed open bounded and connected domain with regular boundary. For simplicity, we restrict the analysis to the Dirichlet problem for the Laplace operator on such domains. Our goal is to develop an adaptive approach that yields both qualitative homogenization and also error estimates, and it is adaptive with respect to the ratio between the size of the holes and the periodicity of the array. The problem of interest is, hence, to study the large scale behavior of $u^{\eps,a_\eps}$ that solves the following problem:
\begin{equation}
\label{eq:ppeps}
\left\{
\begin{aligned}
&-\Delta u^{\eps,a_\eps}(x) = g(x), &\quad &x \in D^{\eps,a_\eps},\\
&u^{\eps,a_\eps}(x) = 0, &\quad &x \in \partial D^{\eps,a_\eps}.
\end{aligned}
\right.
\end{equation}  
Here $D^{\eps,a_\eps} \subset \R^d$ denotes the perforated domain as described above, $\eps$ and $a_\eps$ are  small positive numbers which represent roughly the sizes of the periodicity and the holes respectively. The source term $g$ is assumed to be sufficiently regular, say in $L^2(D)$.

We now set up the mathematical model for $D^{\eps,a_\eps}$ here and use it throughout the paper. Let $D \subset \R^d$, $d \ge 2$, be an open bounded and connected domain, with $C^{1,\alpha}$ boundary $\partial D$ for some $\alpha \in (0,1)$. To obtain $D^{\eps,a_\eps}$, we remove from $D$ a periodic array of small sets. Those sets are rescaled from unit ones, so we start with the scenario at the unit scale. Let $Q := (-\frac12,\frac12)^d$ be the unit cube centered at the origin with side length one. Let $T \subset Q$ be an open set satisfying
\begin{equation}
\label{eq:refhole}
\overline{B}_{\frac1{16}}(0) \subset T \subset \overline{T} \subset B_\frac14(0),
\end{equation}
and with $C^{1,\alpha}$ boundary $\partial T$. For each $\eta \in (0,1]$, we set $Y_{f,\eta} := Q\setminus \eta \ol T$. Then $Y_{f,\eta}$ is the part of $Q$ with the set $\eta \ol T$ removed; for $\eta = 1$, we use $Y_f$ for $Y_{f,1}$. The subscript ``$f$" refers to ``fluid" or material part of the domain.

We then take copies of $Y_{f,\eta}$, and glue them together to form the perforated whole space
\begin{equation}
\R^d_{f,\eta} := \cup_{k \in \Z^d} \left( k+ Y_{f,\eta}\cup \partial Q \right) = \R^d \setminus \cup_{k \in \Z^d} \left( k + \eta \ol T\right).
\end{equation}
The second representation above says, $\R^d_{f,\eta}$ is constructed by removing from $\R^d$ the $\Z^d$-translated copies of the model hole $\eta \ol T$. We then rescale $\R^d_{f,\eta}$ by a small positive number $\eps$ and get $\eps \R^d_{f,\eta}$. In this scaled perforated whole set, the holes are rescaled from $T$ by a factor of $\eta \eps$, and each hole is enclosed by a periodic cell rescaled from $Q$ by a factor of $\eps$. 

The perforated domain $D^{\eps,a_\eps}$ of this paper is then defined by
\begin{equation*}
D^{\eps,a_\eps} := D \cap (\eps \R^d_{f,\eta_\eps}), \qquad\text{with} \quad \eta_\eps := a_\eps/\eps.
\end{equation*}
In other words, $D^{\eps,a_\eps}$ is obtained by cutting the piece of $\eps \R^d_{f,\eta_\eps}$ that is inside the set $D$ fixed earlier. The boundary of $D^{\eps,a_\eps}$ hence consists of $\partial D$ and the union of the boundaries of the holes. To avoid non-Lipschitz domains we modify the above definition a little bit: for those $k \in \Z^d$ such that $\partial D \cap \eps (k+\partial Q)$ is nonempty, we add the set $\eps(k+\eta\ol T)$ back to $D^{\eps,a_\eps}$, so the holes inside $D^{\eps,a_\eps}$ are separated from the outer boundary $\partial D$ by $\eps$. 

Through out the paper, we assume:
\begin{enumerate}
	\item[(P1)] For each $\eps \in (0,1)$, $a_\eps$ is chosen in $(0,\eps)$, and $D^\eps = D^{\eps,a_\eps}$ is constructed as above. In particular, the reference hole $T$ satisfies \eqref{eq:refhole} and $\eta_\eps = \frac{a_\eps}{\eps}$ is the ratio between the scaling factors of the holes and the periodic cells.
	\item[(P2)] The source term $g$ belongs to $L^2(D)$.
\end{enumerate} 
In the rest of the paper, we simplify the notations $D^{\eps,a_\eps}$ and $u^{\eps,a_\eps}$ to $D^\eps$ and $u^\eps$.

The goal of homogenization is to find the asymptotic behavior of $u^{\eps}$ as $\eps$ goes to $0$. Let us first consider the setting where $a_\eps = \eps$. In this case, the domain $D^\eps$ has two distinguished scales; the outer boundary $\partial D$ has length scale one, and the boundary of holes has scale $\eps$. We use the formal two-scale expansion and consider the ansatz
\begin{equation*}
u^\eps = u_0(x,\frac x\eps) + \eps u_1(x,\frac x\eps) + \eps^2 u_2(x,\frac x\eps) + \cdots
\end{equation*} 
where $u_i: \R^d\times Y_f \to \R$, $i = 0,1,2,\cdots$, satisfies periodic condition at the outer boundary $\partial Y_f \cap \partial Q$ and the Dirichlet boundary condition $u_i = 0$ at $\partial Y_f \cap Q$. Apply this ansatz in \eqref{eq:ppeps}, replace $\nabla$ by $\nabla_x + \frac{1}{\eps}\nabla_y$, and identify terms of the same order in $\eps$. We get,
\begin{equation*}
\begin{aligned}
&-\Delta_y u_0(x,y) = 0,\\
&-\Delta_y u_1(x,y) + 2\nabla_x\cdot\nabla_y u_0(x,y) = 0,\\
&-\Delta_y u_2(x,y) + 2\nabla_x\cdot\nabla_y u_1(x,y) = g(x).
\end{aligned}
\end{equation*}
We view those equations as posed on $Y_f$ with periodic boundary condition at $\partial Q$ and Dirichlet condition at $\partial Y_f \cap Q$, and the $x$-variable as a parameter. Then $u_0$ has to be zero, and so does $u_1$. The equation for $u_2$ reduces to $-\Delta_y u_2(x,y) = g(x)$. By linearity and uniqueness, we get $u_2(x,y) = g(x)\chi(y)$, where $\chi : \R^d_f \to \R$ is $\Z^d$-periodic and solves the \emph{cell problem}: 
\begin{equation}
\label{eq:cellprob}
\left\{
\begin{aligned}
-\Delta_y \chi(y) = 1, \qquad &y \in Y_f\\
\chi(y) = 0, \qquad &y \in \partial T.
\end{aligned}
\right.
\end{equation}
Elliptic theory shows that $\chi$ is uniquely determined. Then formally we have $u^\eps(x) \approx \eps^2 g(x)\chi(\frac{x}{\eps})$, and it follows that $u^\eps/\eps^2$ converges weakly in $L^2$ to $c_* g$ where $c_* = \int_{Y_f} \chi$. We conclude that $u^\eps$ is of order $\eps^2$ and the limit of $u^\eps/\eps^2$ is given by an algebraic equation. In some sense, this is analogous to the homogenization derivation of Darcy's law for Stokes' problem in periodically perforated domain. The formal two-scale expansion derivations for Stokes' system goes back at least to Keller \cite{Keller} and Sanchez-Palencia \cite{Sanchez}, and the rigorous proof was first obtained by Tartar \cite{Tartar}. Tartar's proof is based on oscillating test functions built from rescalings of the cell problem. The approach of Tartar was then generalized to the evolutional setting by Mikeli\u{c} \cite{Mikelic} and to the compressible setting by Masmoudi \cite{Masmoudi}. We refer to the books \cite{BLP} and \cite{Jikov} for more comprehensive introductions to the homogenization theory.

As long as different scaling for holes and cells is considered, Cioranescu and Murat were the first to identify the critical scaling ratio between $a_\eps$ and $\eps$; see \cite{CioMur-1,CioMur-2}. They found that the critical relative order (with respect to $\eps$) for $a_\eps$ is given by 
\begin{equation}
\label{eq:acritical}
a^\eps_* \sim \eps^{\frac{d}{d-2}}, \quad d\ge 3, \qquad \text{and} \qquad 
\log a^\eps_*  \sim -\frac{1}{\eps^2}, \quad d = 2.
\end{equation}
Under those critical scalings, say when equality holds above, the zero extension of $u^{\eps}$ is of order one and converges weakly in $H^1(D)$ to $u$, the solution of the effective problem
\begin{equation}
\label{eq:pp_c}
\left\{
\begin{aligned}
&-\Delta u(x) + \mu_* u(x) = g(x), &\quad &x \in D,\\
&u(x) = 0, &\quad &x \in \partial D,
\end{aligned}
\right.
\end{equation}
where $\mu_*$ is a positive constant. In fact, $\mu_*$ is the Newtonian capacity of the set $T$ for $d\ge 3$ and it equals to $1/2\pi$ (which is the logarithmic capacity) for $d=2$; see Section \ref{sec:mainresults}. On the other hand, for $a_\eps \ll a^\eps_*$, the limit of $u^\eps$ is the solution to
\begin{equation}
\label{eq:pp_subc}
\left\{
\begin{aligned}
&-\Delta u(x) = g(x), &\quad &x \in D,\\
&u(x) = 0, &\quad &x \in \partial D.
\end{aligned}
\right.
\end{equation}
In other words, in the setting of $a_\eps \ll a^\eps_*$, the homogenized problem does not see the holes. Due to this comparison, Cioranescu and Murat named the term $\mu_* u$ in the critical scaling the ``\emph{strange term coming from nowhere}".

\medskip

\noindent{\bf Scaling regimes.} The critical scaling $a^\eps_*$ comes from the application of the Poincar\'e inequality \eqref{eq:poincare} that is recorded in the Appendix. Given the scaling factors $\eps$ for the periodic cells and $a_\eps$ for the hole, let $\eta = \eta(\eps) = a_\eps/\eps$. We then follow Allaire \cite{Allaire91-1} and define the parameter
\begin{equation}
\label{eq:sige}
\sigma_\eps = \eps \eta^{-\frac{d-2}{2}} \quad d\ge 3, \qquad\text{and}\qquad 
\sigma_\eps = \eps|\log \eta|^\frac12 \quad d=2.
\end{equation}
The critical scaling $a^\eps_*$ is chosen to make $\sigma_\eps$ be of order one. In the rest of the paper, by \emph{positive (limiting) volume fraction} regime we mean $a_\eps \sim \eps$, and \emph{vanishing volume fraction} refers to the situation of $a_\eps/\eps \to 0$. For the latter case, we consider three sub-cases. If $a_\eps \sim a^\eps_*$, or equivalently $\sigma_\eps \sim 1$, we call it the \emph{``critical hole-cell ratio''} setting; similarly, \emph{``sub-critical hole-cell ratio''} refers to the situation of $a_\eps \ll a^\eps_*$ (note that $\sigma_\eps \gg 1$), and \emph{``super-critical hole-cell ratio''} refers to the case when $a^\eps_* \ll a_\eps$, or $\sigma_\eps \ll 1$. Those terminologies are used throughout the paper.

In the pioneering work of Cioranescu and Murat \cite{CioMur-1,CioMur-2}, a general framework was developed for the setting of vanishing volume fractions of holes for the Dirichlet problem of the Laplace operator. Moreover, a corresponding framework for quantification was established by Kacimi and Murat \cite{Kacimi_Murat}. The framework was later extended by Allaire to Stokes and Navier-Stokes problems \cite{Allaire91-1,Allaire91-2}, to the obstacle problems by Caffarelli and Mellet \cite{CafMel}, and to random settings by Ho\`ang \cite{Hoang-2,Hoang}; see also \cite{FeiLu-1} and see \cite{GiuHof-1,GiuHof-2} for randomly perforated domains based on Poisson piont processes.

The framework in \cite{CioMur-1} was set for holes with vanishing volume fractions and there is no reference to the cell problems. The latter, on the other hand, is central for the framework of Tartar for the more classical setting of $a_\eps \sim \eps$. A natural question arises to find the connection between the two frameworks. As far as we know, Allaire was the first to consider such a relation, and in \cite{Allaire-3} he found that the strange term in the critical setting (which corresponds to the so-called Brinkman's law) is the inverse of the permeability tensor in the super-critical setting with vanishing volume fraction of holes (which corresponds to the Darcy's laws), and this permeability tensor is the limit of a family of such tensors associated to the cell problems posed on a large periodic cell with a model hole removed. 

Inspired by this work of Allaire, we develop in this paper a unified approach for the homogenization of \eqref{eq:ppeps} that is based on the oscillating test function method and on the analysis of cell problems, and it works for all regimes of hole-cell ratios. For vanishing volume fraction of holes, we need to consider rescaled cell problems posed in a large periodic box whose side length is the inverse of the hole-cell ratio, and with the model hole removed. The large box limit of the cell problems is obtained by the compactness method very similar to Allaire \cite{Allaire-3}. A major profit of this new adaptive approach is, it can be easily quantified. The key tool we use for the quantifications is the large box limit of periodic potential theory; our results on this is interesting in its own rights.

Before concluding this introduction, we remark that the Dirichlet condition is crucial to make the scaling differences in the homogenization. If, for instance, the Neumann boundary $\nu\cdot \nabla u^\eps = 0$ is equipped in \eqref{eq:ppeps}, the homogenization procedure will not distinguish a critical scaling for $a_\eps$. This is because $u^\eps$ will not have a natural extension that is small somewhere, and the Poincar\'e inequality \eqref{eq:poincare} will be irrelevant; see e.g. \cite{AllMur} for a detailed treatment of the Neumann problem. We also refer to \cite{Monsur,AGGJS,J-CMS} for homogenization with transmission conditions across the boundaries of the holes.

The rest of the paper is organized as follows: We state the main theorems in the next section, which include not only qualitative homogenization but also error estimates for vanishing volume fractions. In section \ref{sec:otest} we prove the homogenization results by an adaptive oscillating test function method that works for various regimes of hole-cell ratios. The key large box limit of the cell problem studied in section \ref{sec:cell}. In section \ref{sec:error} we quantify the homogenization and obtain error estimates. Although the unified approach for homogenization works for all spatial dimensions $d \ge 2$, the two dimensional setting is different in terms of technical details. Hence, the proofs in sections \ref{sec:otest} and \ref{sec:error} are restricted to $d\ge 3$ and in section \ref{sec:twod} we present the main modifications that are needed for $d=2$. In the Appendix, we record facts on a Poincar\'e inequality and on potential theory of the Laplace operator; they are the key tools in our analysis.

\noindent{\bfseries Notations.} We list some special notations that will be used in the paper. Given $a,b\in [0,\infty)$, we use $a\wedge b$ for the minimum of the two numbers, and $a\vee b$ for the maximum. We use $\scD^{1,2}(\R^d)$ for the completion of smooth and compactly supported functions with respect to the norm $\|\nabla \varphi\|_{L^2(\R^d)}$. Let $E$ be an open set of $\R^d$ with smooth boundary, if $f$ is in the Sobolev space $H^1(D)$ and the trace of $f$ on $\partial E$ vanishes, we denote by $\tilde f$ the zero extension of $f$; that is $\tilde f: \R^d \to \R$ with $\tilde f = f$ on $E$ and $\tilde f = 0$ in $\R^d\setminus \ol E$. For a measurable function $f \in L^1(E;\R)$ where $E$ is a bounded open set of $\R^d$, we denote by $\langle f\rangle_E$ or $\fint_E f$ the average $\frac{1}{|E|}\int_E f$; similarly, $\langle f\rangle_S$ and $\fint_S f$ both denotes the average of $f$ over a hypersurface $S$ in $\R^d$, where the induced surface measure is used in the integral. We denote cubes with unit side length by $Q$, and by $rQ$ or $Q_r$ the dilation of $Q$ with respect to its center by a factor of $r$; similarly, $B$, $B_r$ and $rB$ are the corresponding notations for balls.

\section{Preliminaries and the Main results}
\label{sec:mainresults}

\medskip

We start this section by the energy estimates for \eqref{eq:ppeps}. Throughout the paper only consider the $L^2$ setting. Under the usual assumptions, for each fixed $\eps$ and $a_\eps$, the basic elliptic PDE theory yields that \eqref{eq:ppeps} admits a unique solution $u^\eps \in H^1_0(D^\eps)$. Let $\tilde u^\eps$ be the extension of $u^\eps$ to $D$ by zero. Then the following estimates hold.

\begin{lemma}
\label{lem:u_ee} Assume {\upshape(P1)} and {\upshape(P2)}. Then there exists a positive constant $C$ depending only on the model hole $T$, the whole set $D$ and the spatial dimension $d$, such that for all $\eps$ and $a_\eps$, the unique solution $u^\eps$ of \eqref{eq:ppeps} satisfies
\begin{equation}
\label{eq:ubdd}
\|\tilde u^\eps\|_{L^2(D)} \le C(1\wedge \sigma_\eps^2)\|g\|_{L^2(D)} , \quad \|\nabla \tilde u^\eps\|_{L^2(D)} \le C(1\wedge \sigma_\eps)\|g\|_{L^2(D)}.
\end{equation}
\end{lemma}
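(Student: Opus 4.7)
The plan is to combine the standard energy identity with two Poincaré-type inequalities, choosing the better one in each regime.

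First, I would test \eqref{eq:ppeps} against $u^\eps$ itself. Since $u^\eps \in H^1_0(D^\eps)$, integration by parts gives the identity
\begin{equation*}
\int_{D^\eps} |\nabla u^\eps|^2\,dx = \int_{D^\eps} g\, u^\eps\, dx,
\end{equation*}
which, after zero extension and Cauchy--Schwarz, reads $\|\nabla \tilde u^\eps\|_{L^2(D)}^2 \le \|g\|_{L^2(D)} \|\tilde u^\eps\|_{L^2(D)}$.

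The next step is to control $\|\tilde u^\eps\|_{L^2(D)}$ by $\|\nabla \tilde u^\eps\|_{L^2(D)}$ with the sharpest constant available. Two inequalities are available: on one hand, since $\tilde u^\eps \in H^1_0(D)$, the classical Poincaré inequality on the fixed domain $D$ yields $\|\tilde u^\eps\|_{L^2(D)} \le C_D\|\nabla \tilde u^\eps\|_{L^2(D)}$; on the other hand, because $\tilde u^\eps$ vanishes on the $\eps$-periodic array of holes of size $a_\eps$, the perforated-domain Poincaré inequality \eqref{eq:poincare} recorded in the Appendix gives the improved constant
\begin{equation*}
\|\tilde u^\eps\|_{L^2(D)} \le C\,\sigma_\eps\,\|\nabla \tilde u^\eps\|_{L^2(D)},
\end{equation*}
where $\sigma_\eps$ is as defined in \eqref{eq:sige} and $C$ depends only on $T$, $D$ and $d$. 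Taking the minimum of the two constants produces the factor $1\wedge \sigma_\eps$.

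Plugging this Poincaré bound into the energy identity and dividing by $\|\nabla \tilde u^\eps\|_{L^2(D)}$ yields $\|\nabla \tilde u^\eps\|_{L^2(D)} \le C(1\wedge\sigma_\eps)\|g\|_{L^2(D)}$, which is the second estimate in \eqref{eq:ubdd}. Feeding this back into the Poincaré inequality gives $\|\tilde u^\eps\|_{L^2(D)} \le C(1\wedge\sigma_\eps)^2 \|g\|_{L^2(D)}$, which is the first estimate. The only non-routine ingredient is the perforated-domain Poincaré inequality with the $\sigma_\eps$-scaling; this is precisely where the critical scaling \eqref{eq:acritical} enters, and since the paper defers its proof to the Appendix, here it is used as a black box. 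No further obstacles arise.
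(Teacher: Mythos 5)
Your proof is correct and follows essentially the same approach as the paper: the energy identity plus a choice between the classical Poincar\'e inequality on $D$ and the perforated-domain Poincar\'e inequality, yielding the $1\wedge\sigma_\eps$ factor. The one step you glossed over is that \eqref{eq:poincare} is a single-cube estimate, so passing to the global bound $\|\tilde u^\eps\|_{L^2(D)}\le C\sigma_\eps\|\nabla\tilde u^\eps\|_{L^2(D)}$ requires decomposing $D$ into $\eps$-cubes, applying the inequality in each (after noting $\tilde u^\eps$ vanishes on a ball of radius $\sim a_\eps$ in each cube), and summing; the paper carries this out explicitly.
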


\begin{proof} Integrate the equation \eqref{eq:ppeps} against $u^\eps$ itself, we get
\begin{equation*}
\int_D |\nabla \tilde u^\eps|^2 = \int_{D^\eps} |\nabla u^\eps|^2 = \int_{D^\eps} g u^\eps = \int_D g\tilde u^\eps.
\end{equation*}
On the one hand, since $\tilde u^\eps \in H^1_0(D)$ and $D$ has bounded diameter, by the Poincar\'e inequality we can find some constant $C$ depending only on $D$ and $d$ such that $\|\tilde u^\eps\|_{L^2(D)} \le C\|\nabla \tilde u^\eps\|_{L^2(D)}$. We then deduce that
\begin{equation*}
\|\nabla \tilde u^\eps\|_{L^2(D)} \le C\|g\|_{L^2(D)} \qquad\text{and}\qquad
\|\tilde u^\eps\|_{L^2(D)} \le C^2\|g\|_{L^2(D)}.
\end{equation*}
On the other hand, the integral of $|\tilde u^\eps|^2$ and $|\nabla \tilde u^\eps|^2$ over the whole space $D$ is a sum of their integrals over cubes in $\{Q_\eps^k = \eps(k+Q) \,:\, Q_\eps^k \cap D \ne \emptyset, k \in \Z^d\}$. Inside each $Q_\eps^k$, in view of the fact that $\ol B_{1/16} \subset T$, we check that $\tilde u^\eps = 0$ in a ball whose radius is of order $a_\eps$. Hence, $\tilde u^\eps$ satisfies the conditions of the Poincar\'e inequality in \eqref{eq:poincare}. Applying the inequality, we get
\begin{equation*}
\|\tilde u^\eps\|^2_{L^2(D)} = \sum_{\{k\,:\, Q_\eps^k \cap D \ne \emptyset\}} \|\tilde u^\eps\|^2_{L^2(Q_\eps^k)} \le \sum_{k} C\sigma_\eps^2 \|\nabla \tilde u^\eps\|^2_{L^2(Q_\eps^k)} = C\sigma_\eps^2 \|\nabla \tilde u^\eps\|^2_{L^2(D)}.
\end{equation*}
Apply this inequality to the integral at the beginning of the proof, we obtain
\begin{equation*}
\|\nabla \tilde u^\eps\|_{L^2(D)} \le C\sigma_\eps \|g\|_{L^2(D)} \qquad\text{and}\qquad
\|\tilde u^\eps\|_{L^2(D)} \le C^2\sigma_\eps^2\|g\|_{L^2(D)}.
\end{equation*}
The constant $C$ above only depends on $T$ and $d$. The desired estimate \eqref{eq:ubdd} then follows.
\end{proof}

\subsection{Homogenization results}

Lemma \ref{lem:u_ee} provides bounds on $\tilde u^\eps$, and the bounds depend on the hole-cell ratios, and $a^\eps_*$ is the critical scaling for the size of holes below which the Poincar\'e inequality loses its force and $\|\tilde u^\eps\|_{L^2(D)}$ is no longer of small order. From the proof of the lemma, we see that those regime dependent phenomena come from the Dirichlet boundary condition at the boundary of holes. 

We expect that the homogenization of $\tilde u^\eps$ also depends on the hole-cell ratios. We define 
\begin{equation}
\label{eq:chat}
c_* = \left\{ \begin{aligned}
&\langle \chi \rangle_{Q}, &\qquad &\text{if } a_\eps = \eps,\\
&\frac{1}{\mathrm{Cap}(T)}, &\qquad &\text{if } \lim_{\eps \to 0} a_\eps/\eps = 0 \text{ and } d\ge 3\\
&\frac{1}{2\pi}, &\qquad &\text{if } \lim_{\eps \to 0} a_\eps/\eps = 0 \text{ and } d \ge 2.\\
\end{aligned}
\right.
\end{equation}
Here, $\mathrm{Cap}(T)$ denotes the Newtonian capacity of the set $T \subset \R^d$, $d\ge 3$, which is an importance concept in potential theory. In two dimensions, $2\pi$ is related to the Logarithmic capacity of $T$. It will be clear in sections \ref{sec:cell} and \ref{sec:error} how the capacity of $T$ enters our analysis. 

The first main theorem of the papers is the qualitative homogenization of \eqref{eq:ppeps}; the scaling factors for $\tilde u^\eps$ come from the energy estimates.

\begin{theorem}
\label{thm:homog}
Assume {\upshape(P1)} and {\upshape(P2)}, let $\sigma_\eps$ be defined as in \eqref{eq:sige} and let $c_*$ be defined as in \eqref{eq:chat}. Then we have the following results.
\begin{enumerate}
\item[(1)] If \, $\lim_{\eps \to 0} \sigma_\eps = 0$, i.e. $a^\eps_* \ll a_\eps \le \eps$, then $\tilde u^\eps/\sigma_\eps^2$ converges weakly in $L^2(D)$ to $u := c_* g$.
\item[(2)] If\, $\lim_{\eps \to 0} \sigma_\eps = 1$, i.e. $a_\eps \sim a_*^\eps$, then $\tilde u^\eps$ converges weakly in $H^1(D)$ and strongly in $L^2(D)$ to the unique solution of \eqref{eq:pp_c}, where $\mu_* = c_*^{-1}$.
\item[(3)] If\, $\lim_{\eps \to 0} \sigma_\eps = \infty$, i.e. $a_\eps \ll a_*^\eps$, then $\tilde u^\eps$ converges weakly in $H^1(D)$ and strongly in $L^2(D)$ to the unique solution of \eqref{eq:pp_subc}.
\end{enumerate}
\end{theorem}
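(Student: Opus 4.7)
The plan is to run Tartar's oscillating test function argument, made adaptive to the regime of $\eta_\eps = a_\eps/\eps$. By \lref{u_ee}, in cases (2) and (3) the sequence $\tilde u^\eps$ is bounded in $H^1_0(D)$, so up to subsequences $\tilde u^\eps \rightharpoonup u$ weakly in $H^1_0(D)$ and, by Rellich, strongly in $L^2(D)$; in case (1), $\sigma_\eps \to 0$ and the right object is $\tilde u^\eps/\sigma_\eps^2$, which is bounded in $L^2(D)$ and admits a weak $L^2$ subsequential limit $u$. Once $u$ is identified in each case, uniqueness of the effective problem (or of the algebraic relation $u = c_* g$) upgrades the convergence to the full sequence.

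The engine is an adapted test function $w^\eps$, obtained by periodic tiling of a rescaled cell problem and extended by zero inside the holes. When $a_\eps \sim \eps$, I would take $w^\eps(x) := \eps^2 \chi(x/\eps)$ with $\chi$ solving \eqref{eq:cellprob}. When $\eta_\eps \to 0$, I would work instead with the cell problem posed on the large box $Q_{1/\eta_\eps}$ with the unit hole $T$ removed, whose large-box asymptotics, the main content of Section \ref{sec:cell}, identify the boundary flux through $\partial T$ in terms of $\mathrm{Cap}(T)$ (or of $2\pi$ in $d=2$). Normalizing correctly, one obtains $w^\eps$ with the following key properties: $w^\eps = 0$ on every hole boundary; in cases (2) and (3), $w^\eps \rightharpoonup 1$ weakly in $H^1(D)$ (hence $w^\eps \to 1$ strongly in $L^2$ and $\nabla w^\eps \rightharpoonup 0$ weakly in $L^2$); and the distribution $\mu^\eps := -\Delta w^\eps$, supported on thin neighborhoods of the hole boundaries, converges dually to $\mu_* = c_*^{-1}$ in the critical regime and to $0$ in the sub-critical regime. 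In case (1), the same construction is rescaled by the factor $\sigma_\eps^2$, so that $w^\eps/\sigma_\eps^2$ converges to $c_*$.

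With $w^\eps$ in hand, for any $\phi \in C_c^\infty(D)$, noting that $\phi w^\eps \in H^1_0(D^\eps)$ is the crucial admissibility, I would test \eqref{eq:ppeps} against $\phi w^\eps$ and test the equation $-\Delta w^\eps = \mu^\eps$ against $\phi \tilde u^\eps$. The common cross term $\int \phi \nabla u^\eps \cdot \nabla w^\eps$ cancels on subtraction, producing the identity
\[
\int_{D^\eps} w^\eps \nabla u^\eps \cdot \nabla \phi \; - \; \int_{D^\eps} u^\eps \nabla w^\eps \cdot \nabla \phi \; = \; \int_{D^\eps} g \phi w^\eps \; - \; \langle \mu^\eps, \phi \tilde u^\eps \rangle.
\]
In cases (2) and (3), passing to the limit using the weak convergences above yields $\int_D \nabla u \cdot \nabla \phi + \int_D \mu_* u \phi = \int_D g \phi$, the weak form of \eqref{eq:pp_c} when $\mu_* = c_*^{-1}$ and of \eqref{eq:pp_subc} when $\mu_* = 0$. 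In case (1), after dividing by $\sigma_\eps^2$ first, the gradient terms in the identity are of lower order by the energy estimate \eqref{eq:ubdd}, so only the algebraic identity $u = c_* g$ survives.

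The principal obstacle is constructing $w^\eps$ and establishing the convergence of $\mu^\eps$ when $\eta_\eps \to 0$; this is the large-box cell problem analysis of Section \ref{sec:cell}, which requires compactness for a family of PDEs on expanding domains and the identification of the limiting boundary flux with a capacity. A secondary difficulty is the boundary layer near $\partial D$, where the periodic construction of $w^\eps$ is incompatible with the outer boundary data; here the paper's construction of $D^\eps$, which keeps holes at distance at least $\eps$ from $\partial D$, combined with a smooth cut-off in $w^\eps$ and the Poincaré inequality of the Appendix, will absorb the boundary correction into error terms that vanish in the limit.
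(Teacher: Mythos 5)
Your proposal follows essentially the same path as the paper: an adaptive oscillating test function built from the (appropriately rescaled) periodic cell problem, the duality identity obtained by testing each equation against the other's solution, and the large-box cell problem asymptotics of Section~\ref{sec:cell} as the single nontrivial technical input. One conceptual correction is worth making. You describe $\mu^\eps = -\Delta w^\eps$ as a distribution ``supported on thin neighborhoods of the hole boundaries'' whose dual convergence must be established---this is a Cioranescu--Murat picture, where the test function is an explicit capacitary profile in a shell and the hard hypothesis is $H^{-1}$-strong convergence of the resulting boundary measures. For the cell-problem-based $w^\eps$ your proposal actually constructs (i.e.\ $w^\eps = v^\eps/c_*$ with $v^\eps$ as in \eqref{eq:cell_01}), $-\Delta w^\eps$ equals the \emph{constant} $(c_*\sigma_\eps^2)^{-1}$ throughout $D^\eps$; the singular boundary part of $-\Delta\tilde w^\eps$ is irrelevant since $\phi\tilde u^\eps$ vanishes on the hole boundaries. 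Thus $\langle\mu^\eps,\phi\tilde u^\eps\rangle = (c_*\sigma_\eps^2)^{-1}\int_D\phi\tilde u^\eps$, and the strange term emerges directly from the energy bound of Lemma~\ref{lem:u_ee} and the value of $\lim\sigma_\eps$. No convergence of boundary measures in $H^{-1}$ is needed; the only nontrivial limit is $v^\eps\to c_*$ strongly in $L^2$ (Lemma~\ref{lem:cell_1}), which is where the capacity enters. This is precisely the simplification the paper's method buys over Cioranescu--Murat, and mislocating the difficulty there would send you down a harder road. A secondary note: the boundary-layer cut-off you anticipate near $\partial D$ is unnecessary, since $\phi\in C_c^\infty(D)$ already guarantees $\phi w^\eps\in H^1_0(D^\eps)$ (recall $w^\eps$ vanishes on the hole boundaries and $\phi$ vanishes near $\partial D$).
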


\begin{remark}
Item two above corresponds to the critical setting in \cite{CioMur-2}, and item three is the sub-critical setting there. The first item is an analog of the super-critical setting for Stokes problem in \cite{Tartar,Allaire91-1,Allaire91-2}; in this setting, $\tilde u^\eps$ is of order $\sigma_\eps$ in $H^1$ and converges strongly to zero, and a proper rescaling of it has a limit given by an algebraic equation, just like the Darcy's law for the Stokes problems.

We emphasize that the coefficient $\mu_*$ in the strange term in the critical setting equals to the inverse of the multiplier, $\mathrm{Cap}^{-1}(T)$ or $1/2\pi$ depending on $d$, in the limiting algebraic equation in the super-critical setting with vanishing volume fraction of holes. The multiplier $c_*$ for positive volume fraction of holes is apparently different, and the relation between those multipliers is made clear in section \ref{sec:error}. This is an analog of a result by Allaire in \cite{Allaire-3}. 
\end{remark}

Theorem \ref{thm:homog} is proved in section \ref{sec:otest} by an adaptive oscillating test function method that relies on asymptotic behaviors of rescaled cell problems, which are studied in section \ref{sec:cell}.

\subsection{Correctors and Error estimates}

The second main theorem of the paper is the quantification of the homogenization results in the previous theorem. This is done only for the setting of vanishing volume fractions of holes.

\begin{theorem} \label{thm:error}
Assume {\upshape(P1)}, {\upshape(P2)}, $d\ge 3$ and that $a_\eps \ll \eps$. Let $\eta = a_\eps/\eps$ and set $v^\eps = \chi^\eta(\cdot/\eps\eta)$ with $\chi^\eta$ defined by the rescaled cell problem \eqref{eq:cell_2} in section \ref{sec:cell}. Then the following holds.
\begin{itemize}
	\item[(1)] If $\lim_{\eps \to 0} \sigma_\eps = 0$, assume further that $u = c_* g \in W^{2,d}_0(D)$. Then there exists $C$ depending only on $d, T$ and $D$ and we have
	\begin{equation*}
	\begin{aligned}
	&\|\nabla(\tilde u^\eps/\sigma^2_\eps - v^\eps g)\|_{L^2} \le C(\sigma_\eps+\eta^{\frac{d-2}{2}})\|u\|_{W^{2,d}(D)},\\
	&\|\tilde u^\eps/\sigma^2_\eps - v^\eps g\|_{L^2} \le C(\sigma_\eps^2 + \eps)\|u\|_{W^{2,d}(D)} \quad \text{and}\\
	&\|\tilde u^\eps/\sigma^2_\eps - c_* g\|_{L^2} \le C(\sigma_\eps^2 + \eps  + \eta^{\frac{d-2}{2}})\|u\|_{W^{2,d}(D)}.
	\end{aligned}
	\end{equation*}
	\item[(2)] If $\lim_{\eps\to 0} \sigma_\eps = 1$, let $u$ be the solution to \eqref{eq:pp_c} and assume further that $u \in W_0^{2,d}(D)$. Then there exists $C$ depending only on $d, T$ and $D$ such that
	\begin{equation*}
	\begin{aligned}
	&\|\tilde u^\eps - \mathrm{Cap}(T) v^\eps u\|_{H^1} \le C\eps\|u\|_{W^{2,d}(D)} \quad \text{and}\\
	&\|\tilde u^\eps - u\|_{L^2} \le C\eps\|u\|_{W^{2,d}(D)}.
	\end{aligned}
	\end{equation*}
	\item[(3)] If $\lim_{\eps\to 0} \sigma_\eps = \infty$, let $u$ be the solution to \eqref{eq:pp_subc} and assume further that $u \in W^{2,d}_0(D)$. Then there exists $C$ depending only on $d, T$ and $D$ such that
	\begin{equation*}
	\begin{aligned}
	&\|\tilde u^\eps - \mathrm{Cap}(T) v^\eps u\|_{H^1} \le C(\sigma_\eps^{-2} + \eta^{\frac{d-2}{2}})\|u\|_{W^{2,d}(D)} \quad \text{and}\\
	&\|\tilde u^\eps - u\|_{L^2} \le C(\sigma_\eps^{-2} + \eta^{\frac{d-2}{2}})\|u\|_{W^{2,d}(D)}.
	\end{aligned}
	\end{equation*}
	\end{itemize}
	For the setting of $d=2$, the above results hold with $\mathrm{Cap}(T)$ replaced by $2\pi$, and under the stronger condition that $u \in W^{2,\infty}_0(D)$.
\end{theorem}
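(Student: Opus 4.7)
My plan is to derive all three quantitative estimates from a single corrector framework based on the rescaled cell solution $v^\eps=\chi^\eta(\cdot/\eps\eta)$. In each regime I would build a two-scale approximation of $\tilde u^\eps$ (or of its rescaling $\tilde u^\eps/\sigma_\eps^2$) that vanishes on the hole boundaries because $v^\eps$ does, multiply by a thin cutoff $\theta^\eps$ near $\partial D$ to enforce the outer Dirichlet condition, and then subtract from $\tilde u^\eps$ so that the difference lies in $H^1_0(D^\eps)$. The energy identity combined with the adaptive Poincar\'e inequality \eqref{eq:poincare} --- already the driver of Lemma~\ref{lem:u_ee} --- then converts an $H^{-1}$ bound on the residual of the approximate equation into $H^1$ and $L^2$ bounds on the error, with the Poincar\'e constant $\sigma_\eps$ reproducing the regime-dependent rates.

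For the critical case (item (2)), my model case, I would take $w^\eps := \mathrm{Cap}(T)\, v^\eps u$, where $u$ solves \eqref{eq:pp_c}. The large-box analysis of Section~\ref{sec:cell} should guarantee that $\mathrm{Cap}(T) v^\eps$ approximates $1$ in the bulk up to an error of order $\eta^{(d-2)/2}$, so $w^\eps\approx u$ in $L^2$, and that the singular part $-\Delta v^\eps$ (concentrated near the hole boundaries) reproduces, when tested against smooth functions, the strange-term density. Expanding
\begin{equation*}
-\Delta w^\eps = \mathrm{Cap}(T)\bigl[(-\Delta v^\eps)\, u - 2\nabla v^\eps\cdot\nabla u - v^\eps\Delta u\bigr],
\end{equation*}
the first term matches $\mathrm{Cap}(T) u$ from \eqref{eq:pp_c} up to an $O(\eps)$ error (from freezing $u$ at the cell average), the middle term is an $O(\eps)$ high-frequency oscillation in $H^{-1}$, and the last term combines with $-\Delta u = g - \mathrm{Cap}(T)u$ to produce the source $g$. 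Testing the resulting error equation against the error itself and applying the Poincar\'e inequality yields the $H^1$ estimate; the $L^2$ estimate on $\tilde u^\eps-u$ then follows by combining $\|w^\eps-u\|_{L^2}=O(\eps)$ with an Aubin--Nitsche duality argument.

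Items (1) and (3) proceed in parallel, with the respective approximations $\sigma_\eps^2\, v^\eps g$ and $\mathrm{Cap}(T)\, v^\eps u$. In item (1) the homogenized profile is algebraic, so no $\nabla u$ cross-term appears; the Poincar\'e constant $\sigma_\eps\ll 1$ then directly yields the $\sigma_\eps + \eta^{(d-2)/2}$ rate in $H^1$, with an additional $\sigma_\eps$ factor in $L^2$ and an $\eps$ contribution from the boundary cutoff. In item (3) the same mechanism produces the $\sigma_\eps^{-2}$ improvement because $\sigma_\eps\gg 1$. The main obstacle is the quantitative closeness of $\chi^\eta$ to the infinite-space capacitary potential $\Phi_T$ of $T$ on the divergent box $(1/\eta)Q$: the sharp rate $\eta^{(d-2)/2}$ is precisely the content of Section~\ref{sec:cell} and drives every estimate of the theorem. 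A secondary but delicate issue is the boundary layer, where I invoke $W^{2,d}_0(D)$ regularity on $u$ (respectively $W^{2,\infty}_0(D)$ in two dimensions, where Sobolev embedding is logarithmically weaker) to control pointwise values of $u$ and $\nabla u$ in the $\eps$-neighborhood of $\partial D$ and absorb the cutoff contribution into the stated $C\eps$ rate.
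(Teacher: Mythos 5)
Your overall framework---define the corrector $\xi^\eps := \tilde u^\eps/\sigma_\eps^2 - v^\eps g$ (resp. $\tilde u^\eps - \mathrm{Cap}(T)v^\eps u$), note that it lies in $H^1_0(D^\eps)$, derive its equation, test against itself, and combine the energy identity with the Poincar\'e inequality \eqref{eq:poincare} and the rate $\|v^\eps - c_*\|_{L^{2^*}} \le C\eta^{(d-2)/2}$ from Lemma~\ref{lem:rvtc}---is exactly the paper's strategy. But you have introduced two complications that the paper avoids, and you have glossed over the one step that actually carries the proof.

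First, the avoidable complications: since the hypothesis is $u \in W^{2,d}_0(D)$ (resp.\ $g = u/c_* \in W^{2,d}_0(D)$), the product $v^\eps u$ already vanishes on $\partial D$ by trace, and vanishes on the hole boundaries because $v^\eps$ does. So $v^\eps u \in H^1_0(D^\eps)$ without any boundary cutoff; the cutoff would only add an error term you would then have to estimate. Likewise, Aubin--Nitsche duality is never needed here: the $L^2$ bounds follow from the $H^1$ bounds by the same Poincar\'e inequality that drives the energy estimate, with the factor $\sigma_\eps$ converting $\sigma_\eps + \eta^{(d-2)/2}$ into $\sigma_\eps^2 + \eps$ (using $\sigma_\eps\,\eta^{(d-2)/2} = \eps$). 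Second, and more seriously, your claim that the cross-term $\nabla v^\eps\cdot\nabla u$ is ``an $O(\eps)$ high-frequency oscillation in $H^{-1}$'' is precisely the step that must be proved, and the naive estimate does not give it: in the critical regime $\|\nabla v^\eps\|_{L^2} \sim \sigma_\eps^{-1} = O(1)$, and in the subcritical regime it is even large. The paper's trick is to insert $-c_*$ into the gradient, writing $\nabla v^\eps = \nabla(v^\eps - c_*)$, and integrate by parts in $\int_{D^\eps}\xi^\eps\,\nabla(v^\eps - c_*)\cdot\nabla u$ to move the derivative off $v^\eps - c_*$; the boundary term vanishes because $\xi^\eps \in H^1_0(D^\eps)$, and the remaining terms are then controlled by $\|v^\eps - c_*\|_{L^{2^*}}\le C\eta^{(d-2)/2}$ via H\"older with $\|\nabla u\|_{L^d}$ and $\|\Delta u\|_{L^d}$. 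Without this manipulation your energy estimate does not close at the stated rate, so this is a genuine gap. (A minor related point: the ``freezing $u$ at the cell average'' heuristic you invoke for the first term is unnecessary---$-\Delta v^\eps = \sigma_\eps^{-2}$ exactly in $D^\eps$, so that term cancels identically against the PDE for $u$, not up to $O(\eps)$.)
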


\begin{remark} In each item above, the last inequality quantifies the error for the convergence of $\tilde u^\eps$; we can only hope to get convergence rate in $L^2$. The inequalities that precede the error estimates should be viewed as \emph{corrector} results. They provide necessary corrections that should be added to the homogenization limits to get stronger convergences and estimates. Take the first item, for instance, the corrector to the limit function $c_* f$ is given by $\phi^\eps := (v^\eps - c_*)g$, and with this corrector we have $\tilde u^\eps/\sigma_\eps^2 - (c_*g + \phi^\eps)$ strongly converges to zero in $H^1(D)$ with explicit error bounds.  

The above result in the critical setting was implied by the work of Kacimi and Murat \cite{Kacimi_Murat}; the results on the other settings are new as far as the author knows. In \cite{Kacimi_Murat}, such estimates were obtained by a quantitative general framework that strengthens those of \cite{CioMur-1}. In this paper, the results are proved by simple and natural quantification of the oscillating test function arguments. Moreover, the requirement that $u \in W_0^{2,d}$ in our results, for the $d\ge 3$ setting, seems an improvement from \cite{Kacimi_Murat,Allaire91-1}.
\end{remark}

\section{Asymptotic analysis for the cell problems}
\label{sec:cell}

We propose a unified approach to homogenization that is based on the standard energy method of Tartar with oscillating test functions built from cell problems. The new element is, when the hole-cell ratio vanishes in the limit, we need to consider cell problems with a parameter $\eta$ that stands for this ratio, and study the limit of the cell problem as $\eta \to 0$. This is the main objective of this section. Some earlier results can be found in \cite{Allaire-3}. 

\subsection{The cell problems} When the scaling factors for the holes and cells are the same, i.e. $a_\eps = \eps$, we have derived the cell problem \eqref{eq:cellprob} through formal two-scale expansions. For the setting of vanishing volume fractions, we introduce a parameter $\eta \in (0,1]$ which represents the ratio of the rescaling factors of the holes agains that of the cells, and consider the cell problem
\begin{equation}
\label{eq:cell_1}
\left\{
\begin{aligned}
&-\Delta_y \chi_\eta(y) = 1, &\quad &y \in Y_{f,\eta} = Q\setminus (\eta \ol T),\\
&\chi_\eta(y) = 0, &\quad &y \in \partial(\eta T),\\
&y \mapsto \chi_\eta(y) \text{ is } Q\text{-periodic}.
\end{aligned}
\right.
\end{equation}
Clearly, if $\eta = 1$, we come back to \eqref{eq:cellprob}. For each fixed $\eta$, the standard elliptic PDE theory yields a unique solution $\chi_\eta$ of the above problem. We aim to study the behavior of $\chi_\eta$ as $\eta \to 0$. 

For a reason that will be clear, we consider a rescaled version of the cell problem above. Dilate $Y_{f,\eta}$ by $1/\eta$, and define $\chi^\eta \in H^1_{\rm per}(\eta^{-1}Q)$ by
\begin{equation*}
\chi^\eta(x) = \eta^{d-2}\chi_\eta(\eta x), \qquad x \in \eta^{-1}Q \setminus \ol T,
\end{equation*}
and extend it by zero inside $T$. We also extend $\chi^\eta$ to $H^1_{\rm loc}(\R^d)$ periodically (with respect to $\eta^{-1}\Z^d$). 

\begin{remark}
\label{rem:periodic}
Throughout the paper, for a function $f$ in a cube $rQ$ satisfying periodic conditions at the boundary, we identify $f$ with its periodic extension in the whole space, and we also identify $rQ_\fp$ with a flat torus $r\R^d\setminus (r\Z^d)$ and view $f$ as a function on that torus, where the subscript $\fp$ indicates the identification of opposite sides of $\partial (rQ)$.
\end{remark}

The function $\chi^\eta$ is then the unique solution to the rescaled cell problem:
\begin{equation}
\label{eq:cell_2}
\left\{
\begin{aligned}
&-\Delta_x \chi^\eta(x) = \eta^d, &\quad &x \in \eta^{-1}Q\setminus T,\\
&\chi^\eta(x) = 0, &\quad &x \in \partial T,\\
&x \mapsto \chi^\eta(x) \text{ is } (\eta^{-1}Q)\text{-periodic}.
\end{aligned}
\right.
\end{equation}

\subsection{The scaling limit of cell problems}

In this subsection, we study the limit of $\chi^\eta$ as $\eta$ goes to zero. We restrict to the case of $d\ge 3$; the two dimensional setting is a bit different and is left in section \ref{sec:twod}. The advantage of treating $\chi^\eta$ instead of $\chi_\eta$ is the following uniform estimate.

\begin{lemma}
\label{eq:unif_bdd_u}
Assume {\upshape (P1)} and $d\ge 3$. Then there exists a constant $C$ that depends only on $T$ and $d$, such that
\begin{equation}
\label{eq:cell_C}
\|\nabla \chi^\eta\|_{L^2(\eta^{-1}Q)} \le C, \qquad c_\eta := \langle \chi^\eta \rangle_{\eta^{-1}Q} = \frac{1}{|\eta^{-1}Q|} \int_{\eta^{-1}Q} \chi^\eta \in (0,C].
\end{equation}
\end{lemma}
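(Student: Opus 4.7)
The plan is to combine an energy identity for the rescaled cell problem with the Poincar\'e inequality \eqref{eq:poincare}, applied at the correct scale. First, I would test \eqref{eq:cell_2} against $\chi^\eta$ and integrate over $\eta^{-1}Q \setminus T$. The boundary contributions on $\partial(\eta^{-1}Q)$ cancel by periodicity, and the integral on $\partial T$ vanishes because $\chi^\eta|_{\partial T} = 0$. Extending $\chi^\eta$ by zero inside $T$ and using $|\eta^{-1}Q| = \eta^{-d}$, this yields the key identity
\begin{equation*}
\|\nabla \chi^\eta\|_{L^2(\eta^{-1}Q)}^2 \;=\; \eta^d \int_{\eta^{-1}Q} \chi^\eta \;=\; c_\eta.
\end{equation*}
In particular, both upper bounds in \eqref{eq:cell_C} reduce to the single task of proving $c_\eta \le C$.

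Next, I would bound $c_\eta$ by applying the Poincar\'e inequality \eqref{eq:poincare} to $\chi^\eta$ on $\eta^{-1}Q$. Since $\chi^\eta$ vanishes on $T \supset \overline{B}_{1/16}(0)$, \eqref{eq:poincare} is exactly applicable with outer scale $\eta^{-1}$ and hole-cell ratio $\eta$, producing a Poincar\'e constant of order $\eta^{-1}\cdot \eta^{-(d-2)/2} = \eta^{-d/2}$. Inserting $\|\chi^\eta\|_{L^2(\eta^{-1}Q)} \le C\eta^{-d/2}\|\nabla \chi^\eta\|_{L^2(\eta^{-1}Q)}$ into a Cauchy--Schwarz bound for $\int_{\eta^{-1}Q}\chi^\eta$ (whose domain has volume $\eta^{-d}$), the prefactor $\eta^d$ in the identity above is precisely what is needed to give $c_\eta \le C \|\nabla \chi^\eta\|_{L^2(\eta^{-1}Q)} = C\, c_\eta^{1/2}$, whence $c_\eta \le C^2$ and the two inequalities in \eqref{eq:cell_C} follow.

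For the strict positivity $c_\eta > 0$, I would invoke the strong maximum principle applied to $-\Delta \chi^\eta = \eta^d > 0$ with zero Dirichlet data on $\partial T$ and periodic boundary conditions on $\partial(\eta^{-1}Q)$. Then $\chi^\eta$ attains its minimum only on $\partial T$ and is strictly positive throughout the interior of $\eta^{-1}Q \setminus T$, so its average is strictly positive.

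There is no serious obstacle here; the only point requiring care is tracking the powers of $\eta$. The rescaling defining $\chi^\eta$ is calibrated precisely so that the Poincar\'e constant ($\eta^{-d/2}$), the source prefactor ($\eta^d$) on the right-hand side of \eqref{eq:cell_2}, and the volume of the cube ($\eta^{-d}$) combine to produce a bound independent of $\eta$; this is the entire point of passing from \eqref{eq:cell_1} to \eqref{eq:cell_2} and is what allows the large-box limit studied later in the section.
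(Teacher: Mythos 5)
Your proposal is correct and follows exactly the approach the paper indicates (the paper only sketches it: integrate the cell problem against $\chi^\eta$, use the Poincar\'e inequality \eqref{eq:poincare}, and invoke the maximum principle for positivity). The energy identity $\|\nabla\chi^\eta\|_{L^2(\eta^{-1}Q)}^2 = c_\eta$, the $\eta^{-d/2}$ Poincar\'e constant with $a = 1/16$, the Cauchy--Schwarz step, and the strong maximum principle argument are all sound and the powers of $\eta$ cancel as you claim.
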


This lemma can be proved easily: we only needs to integrate the cell problems against $\chi^\eta$ and then use the Poincar\'e inequality \eqref{eq:poincare}. The sign of $c_\eta$ is due to the maximum principle which yields $\chi^\eta > 0$ in $\eta^{-1}Q\setminus \ol T$. With those uniform estimates, we can explore compactness method and prove the following key lemma which characterizes the asymptotic behavior of $\chi^\eta$.

\begin{lemma}\label{lem:celld3} Assume {\upshape (P1)} and $d \ge 3$. Let $w$ be the unique function in $\scD^{1,2}(\R^d)$ that satisfies
\begin{equation}
\label{eq:exterior}
\left\{
\begin{aligned}
&-\Delta w(x) = 0 &\qquad &x \in \R^d \setminus \ol T,\\
&w(x) = 1 &\qquad &x \in T.
\end{aligned}
\right.
\end{equation}
Then as $\eta \to 0$, $c_\eta$ converges to $c_* = (\mathrm{Cap}(T))^{-1}$; $\tilde \chi^\eta$ converges weakly in $L^p_{\rm loc}(\R^d)$, $p \in [1,\frac{2d}{d-2}]$, to $c_*(1-w)$, and $\nabla \tilde \chi^\eta$ converges weakly in $L^2_{\rm loc}(\R^d)$ to $-c_*\nabla w$.
\end{lemma}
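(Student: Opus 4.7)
I plan to combine the uniform bounds of Lemma \ref{eq:unif_bdd_u} with compactness and elliptic regularity to extract subsequential limits of $\tilde\chi^\eta$ and $c_\eta$, and then to identify the limit uniquely via an integral identity on $\partial T$; uniqueness of the limit upgrades subsequential convergence to convergence of the full family.

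\emph{Compactness.} Fix any bounded open $\Omega\supset \ol T$; for $\eta$ small, $\Omega\subset \eta^{-1}Q$, and the bound $\|\nabla\chi^\eta\|_{L^2(\eta^{-1}Q)}\le C$ restricts to $\|\nabla\tilde\chi^\eta\|_{L^2(\Omega)}\le C$. Since $\tilde\chi^\eta$ vanishes on the set $T$ of positive capacity, the Poincar\'e inequality \eqref{eq:poincare} and Sobolev embedding give $\|\tilde\chi^\eta\|_{L^{2d/(d-2)}(\Omega)}\le C(\Omega)$. A diagonal extraction over $\Omega = B_R$, $R\to\infty$, then produces a subsequence (still denoted $\eta$) along which $c_\eta \to c_\infty \in [0,C]$, $\tilde\chi^\eta \rightharpoonup \chi_\infty$ weakly in $H^1_{\mathrm{loc}}(\R^d)\cap L^{2d/(d-2)}_{\mathrm{loc}}(\R^d)$ and strongly in $L^p_{\mathrm{loc}}$ for $p<\tfrac{2d}{d-2}$, and with $\nabla\chi_\infty \in L^2(\R^d)$ globally by lower semicontinuity.

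\emph{Limit PDE, structure, and identification of the constant.} Passing to the limit in the weak form $\int\nabla\chi^\eta\cdot\nabla\varphi = \eta^d\int\varphi$ against $\varphi \in C^\infty_c(\R^d\setminus \ol T)$ shows $\chi_\infty$ is harmonic on $\R^d\setminus \ol T$, while the zero-extension and trace continuity give $\chi_\infty\equiv 0$ on $T$. For $d\ge 3$, the standard asymptotic expansion of harmonic functions in an exterior domain with $L^2$-gradient supplies a constant $\ell$ such that $\chi_\infty(x)=\ell+O(|x|^{2-d})$ at infinity; hence $\chi_\infty - \ell(1-w)$ is harmonic on $\R^d\setminus \ol T$, vanishes on $\partial T$ and at infinity, so the maximum principle in the exterior domain forces $\chi_\infty=\ell(1-w)$. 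To pin down $\ell$, integrate $-\Delta\chi^\eta=\eta^d$ over $\eta^{-1}Q\setminus T$ and use $(\eta^{-1}Q)$-periodicity to cancel the outer flux, obtaining
\begin{equation*}
\int_{\partial T}\partial_{\nu_T}\chi^\eta\, dS = 1-\eta^d|T|,
\end{equation*}
where $\nu_T$ is the outward unit normal to $T$. The uniform $L^2$-bound on $\chi^\eta$ near $\ol T$, the bounded right-hand side $\eta^d$, and the $C^{1,\alpha}$ regularity of $\partial T$ give uniform $C^{1,\alpha}$ estimates in a neighborhood of $\partial T$, so the normal-flux passes to the limit and the left-hand side converges to $\int_{\partial T}\partial_{\nu_T}\chi_\infty\, dS$. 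The capacitary identity $-\int_{\partial T}\partial_{\nu_T} w\, dS = \mathrm{Cap}(T)$ (obtained by testing $-\Delta w=0$ on $\R^d\setminus \ol T$ against $w$, using $w|_{\partial T}=1$ and the decay at infinity) together with $\chi_\infty=\ell(1-w)$ then yields $\ell\,\mathrm{Cap}(T)=1$, i.e. $\ell = c_* = 1/\mathrm{Cap}(T)$. Since the limit $c_*(1-w)$ and the value $c_*$ are independent of the subsequence, the full family converges as stated, and the weak $L^p_{\mathrm{loc}}$ range follows by interpolation from the uniform $L^{2d/(d-2)}_{\mathrm{loc}}$ bound.

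\emph{Main obstacle.} The delicate step is the structural identification $\chi_\infty=\ell(1-w)$: it relies on the \emph{global} $L^2$-control of $\nabla\chi_\infty$ on $\R^d$ (not merely locally) and on the exterior-domain harmonic analysis that pins down a genuine constant limit at infinity. The integral identity on $\partial T$ then closes the argument only because the uniform $C^{1,\alpha}$-regularity near $\partial T$ permits taking the limit of the normal fluxes; without it one could not convert the algebraic identity on $\partial T$ into the identification $\ell=c_*$.
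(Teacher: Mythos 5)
Your overall architecture (compactness, local identification of the limit via the exterior problem, then uniqueness) matches the paper's, and the pieces that you do carry out are essentially right: the structural claim that a harmonic function on $\R^d\setminus\ol T$ with $\nabla u\in L^2$ has a constant limit $\ell$ at infinity is correct for $d\ge 3$, and the flux identity $\int_{\partial T}\partial_{\nu_T}\chi^\eta = 1-\eta^d|T|$ combined with $\chi_\infty=\ell(1-w)$ does pin down $\ell=1/\mathrm{Cap}(T)$.

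However, there is a genuine gap in the first conclusion of the lemma, namely $c_\eta\to c_*$. You introduce $c_\infty:=\lim c_\eta$ along a subsequence and separately derive $\ell=c_*$ from the flux calculation, but you never argue that $c_\infty=\ell$. These are a priori different quantities: $c_\eta$ is the average of $\chi^\eta$ over the expanding cube $\eta^{-1}Q$, while $\ell$ is the value at infinity of the local weak limit $\chi_\infty$. Local weak convergence on compact sets carries no information about averages over sets whose measure blows up, so the chain $c_\eta\to c_\infty \stackrel{?}{=} \ell = c_*$ is broken at the middle equality. This is precisely where the paper's cutoff device $\xi_\eta$ earns its keep: by showing $(\chi^\eta-c_\eta)\xi_\eta$ is bounded in $\scD^{1,2}(\R^d)$ (using the scaling-invariant Sobolev-Poincar\'e bound $\|\chi^\eta-c_\eta\|_{L^{2d/(d-2)}(\eta^{-1}Q)}\le C$ together with $\|\nabla\xi_\eta\|_{L^d}\le C$), the paper forces the local limit $v=\chi_\infty-c_\infty$ to lie in $\scD^{1,2}$, hence to vanish at infinity, which delivers $\ell=c_\infty$ for free. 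If you want to keep your local-compactness framework you must supply this bridge explicitly, e.g.\ by invoking $\|\chi^\eta-c_\eta\|_{L^{2d/(d-2)}(\eta^{-1}Q)}\le C$, passing to the weak limit to conclude $\chi_\infty-c_\infty\in L^{2d/(d-2)}(\R^d)$, and then observing that $\chi_\infty-c_\infty\to\ell-c_\infty$ at infinity forces $\ell=c_\infty$.

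Two further, more minor, differences from the paper. First, you pass the normal flux $\int_{\partial T}\partial_{\nu_T}\chi^\eta$ to the limit via uniform Schauder/$W^{2,p}$ estimates up to the $C^{1,\alpha}$ boundary; the paper instead tests the weak formulation against a fixed cutoff $\xi_2$, which is lower-tech and avoids invoking boundary regularity theory. Second, your identification of the constant limit at infinity through the spherical-harmonics expansion of exterior harmonic functions with $L^2$-gradient is correct but is a nontrivial structure theorem; the paper sidesteps this entirely because its $\scD^{1,2}$ framework makes decay to zero at infinity an automatic feature of the limit function.
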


In the statement of the theorem, $\scD^{1,2}(\R^d)$ is the so-called homogeneous Sobolev space and it is the completion of smooth and compactly supported functions with norm $\|\nabla\varphi\|_{L^2(\R^d)}$. We refer to \cite{DenLio} for more information regarding this space. \eqref{eq:exterior} is the exterior problem that determines the Newtonian capacity of $T$. In fact, by definition,
\begin{equation}
\label{eq:capdef}
\mathrm{Cap}(T) := \int_{\R^d\setminus \ol T} |\nabla w|^2.
\end{equation}
The weak formulation of problem \eqref{eq:exterior} reads
\begin{equation*}
\int_{\R^d\setminus \ol T} \nabla w \cdot \nabla \varphi + \int_{\partial T} \frac{\partial \varphi}{\partial \nu} = 0, \qquad \forall \varphi \in \scD^{1,2}(\R^d).
\end{equation*}
The requirement that $w \in \scD^{1,2}(\R^d)$ essentially imposes that $w$ decays to zero at infinity. In the proof of Lemma \ref{lem:celld3} below, we provide a proof for the existence and uniqueness of the solution to the exterior problem above$w$; another way to solve \eqref{eq:exterior} is by the potential theory.

\begin{proof} Observe that $\tilde \chi^\eta$ is in $H^1_{\rm loc}(\R^d)$, vanishes in $T$ and is $\eta^{-1}\Z^d$-periodic. We introduce some cut-off functions. Let $\rho: [0,\infty) \to [0,1]$ be smooth and satisfy $\rho \equiv 1$ in $[0,\frac12]$ and $\rho \equiv 0$ in $[1,\infty)$, with derivative $\rho' \in [-4,0]$. Then we define $\xi_\eta(x) = \rho(\eta |x|_\infty)$, where $|x|_\infty := \max_{i=1}^d |x^i|$ is the infinity norm of $x = (x^1,\cdots,x^d)$. Then there exists a constant $C$ depending only on $d$ and $\rho$, and
\begin{equation}
\label{eq:chi2star}
\xi_\eta \equiv 1 \text{ on } \frac{1}{2\eta}Q, \quad \|\nabla \xi_\eta\|_{L^\infty(\eta^{-1}Q)} \le C\eta, \quad\text{and}\quad \|\nabla\xi_\eta\|_{L^d(\eta^{-1}Q)} \le C.
\end{equation}

By the Sobolev embedding, we can find a constant $C_d$ independent of $\eta$, such that
\begin{equation}
\|\chi^\eta - c_\eta\|_{L^{\frac{2d}{d-2}}(\eta^{-1}Q)} \le C_{d}\|\nabla \chi^\eta\|_{L^2} \le C.
\end{equation}
We emphasize that for $L^{\frac{2d}{d-2}}$, the embedding above is scaling invariant and hence $C_d$ is independent of $\eta$. To summarize, we can find a $C$ still independent of $\eta$, and we have
\begin{equation*}
\|\nabla[(\chi^\eta-c_\eta) \xi_\eta]\|_{L^2(\R^d)} \le \|\xi_\eta \nabla \chi^\eta\|_{L^2(\eta^{-1}Q)} + \|\chi^\eta-c_\eta\|_{L^{\frac{2d}{d-2}}(\eta^{-1}Q)}\|\nabla \xi_\eta\|_{L^d(\eta^{-1}Q)} \le C.
\end{equation*}
This shows that $(\chi^\eta-c_\eta)\xi_\eta \in \scD^{1,2}(\R^d)$ and satisfies
\begin{equation*}
\|(\chi^\eta-c_\eta)\xi_\eta\|_{\scD^{1,2}(\R^d)} \le C.
\end{equation*}
By the Banach-Alaoglu theorem, there is a subsequence still denoted by $\eta \to 0$, a function $v \in \scD^{1,2}(\R^d)$ and a constant $\hat c \in (0,C]$, such that: $c_\eta \to \hat c$, $(\chi^\eta-c_\eta)\xi_\eta$ converges weakly in $\scD^{1,2}$ to $v$. We also have $\chi^\eta - c_\eta$  converges to $v$ weakly in $L^{\frac{2d}{d-2}}$ on any bounded set. 

We show next that the possible limit pair $(\hat c,v)$ is uniquely determined and, as a result, the whole sequence converges. To characterize the pair $(\hat c,v)$, we first pass to the limit in the weak formulation of \eqref{eq:cell_2}, and get
\begin{equation*}
-\Delta v = 0 \text{ in } \R^d\setminus \ol T, \quad v\rvert_{\partial T} = -\hat c.
\end{equation*}
in the sense that
\begin{equation*}
\int_{\R^d\setminus \ol T}  \nabla v \cdot \nabla \varphi +  \int_{\partial T} \varphi \frac{\partial v}{\partial \nu}= 0, \qquad \forall \varphi \in \scD^{1,2}(\R^d).
\end{equation*}
By taking the cut-off function $\xi_2 = \rho(2|x|_\infty))$ as the test function for \eqref{eq:cell_2}, we find
\begin{equation*}
\int_{\partial T} \frac{\partial v}{\partial \nu} \, d\sigma = \lim_{\eta \to 0} \int_{\partial T}\frac{\partial \chi^\eta}{\partial \nu}.
\end{equation*}
To compute the right hand side above, we integrate the first equation in \eqref{eq:cell_2} on both side and get
\begin{equation*}
\lim_{\eta \to 0} \int_{\partial T} \frac{\partial \chi^\eta}{\partial \nu} d\sigma = \lim_{\eta \to 0} \int_{\eta^{-1}Q \setminus \ol T} \eta^{d} \,dx = 1.
\end{equation*}
Let $w = -\frac{v}{\hat c}$, then $w \in H^1_{\rm loc}(\R^d)$  solves the problem \eqref{eq:exterior}.

On the other hand, the solution of \eqref{eq:exterior} must be unique. Indeed, if $w_1$ and $w_2$ are two solutions, then set $u = w_1 - w_2$; it satisfies
\begin{equation*}
-\Delta u = 0 \text{ in } \R^d\setminus T, \quad u \lvert_{\partial T} = 0.
\end{equation*}
Integrate the equation above against $u$ itself, we get
\begin{equation*}
\int_{\R^d\setminus T} |\nabla u |^2  = 0.
\end{equation*}
Since $u \in \scD^{1,2}(\R^d)$, we get $u = w_1 - w_2 = 0$ in $\R^d$.

To conclude, we proved that $c_\eta$ converges to $\hat c$, $\chi^\eta$ converges weakly in $L^{2d/(d-2)}_{\rm loc}$ to $\hat c(1-w)$, and $\nabla \chi^\eta$ converges weakly in $L^2_{\rm loc}$ to $-\hat c\nabla w$. Finally, $\hat c$ is determined by the identity
\begin{equation}
\int_{\R^d\setminus \ol T} |\nabla w|^2 = - \int_{\partial T} \frac{\partial w}{\partial \nu} = \frac1{\hat c} \int_{\partial T} \frac{\partial v}{\partial \nu} = \frac1{\hat c}.
\end{equation}
In view of the defining identity of capacity \eqref{eq:capdef}, we deduce that the limit of $c_\eta$ is indeed $c_*$ defined in \eqref{eq:chat}. The proof is now complete.
\end{proof}

\begin{remark} The proof above followed some ideas of Allaire outlined in \cite{Allaire-3} in the setting of Stokes problems. In section \ref{sec:otest}, we provide a quantitative analysis for this convergence result.
\end{remark}

Next we rescale $\chi^\eta$ further to obtain a function that oscillates in the same scale of $D^\eps$. We define
\begin{equation*}
v^\eps(x) := \chi^\eta\left(\frac{x}{\eps\eta}\right) = \eta^{d-2}\chi_\eta\left(\frac{x}{\eps}\right), \quad x \in \eps \R^d_{f,\eta}.
\end{equation*}
Then $v^\eps$ belongs to $H^1_{\rm loc}(\R^d)$, vanishes inside $\eps (k+\eta T)$, $k \in \Z^d$, and is $\eps \Z^d$-periodic. We check that $v^\eps$ is the unique solution to the rescaled cell problem:
\begin{equation}
\label{eq:cell_01}
\left\{
\begin{aligned}
& -\Delta v^\eps(x) = \frac{\eta^{d-2}}{\eps^2} = \frac{1}{\sigma_\eps^2}, &\qquad  &x \in \eps\R^d_{f,\eta},\\
&v^\eps(x) = 0, &\qquad &x \in \cup_{k\in \Z^d} \, \eps(k+\eta T).
\end{aligned}
\right.
\end{equation}
We will use $v^\eps$ to construct oscillating test functions in the proof of homogenization, and the following convergence results will be the key.

\begin{lemma}\label{lem:cell_1} Assume {\upshape(P1)} and $d \ge 3$. For each $\eps$ and $a_\eps$, let $\eta = a_\eps/\eps \in (0,1]$ and let $v^\eps$ be defined as above. Let $K$ be an open and bounded set in $\R^d$ with smooth boundary. Then there exists a positive constant $C$ depending only on $K$, $T$ and $d$, such that
\begin{equation}
\label{eq:gvbdd}
\|\nabla v^\eps\|_{L^2(K)} \le C\eps^{-1} \eta^{\frac{d-2}{2}} = C\sigma_\eps^{-1}.
\end{equation}
Moreover, let $c_*$ be defined as in \eqref{eq:chat}, then we have
\begin{equation}
\label{eq:cp_lim=c*}
\lim_{\eps \to 0} \|v^\eps - c_*\|_{L^2(K)} = 0.
\end{equation}
Finally, for the critical hole-cell ratio, i.e. $a_\eps = a^\eps_*$, we have
\begin{equation}
\label{eq:gvwl}
\nabla v^\eps \rightharpoonup 0 \quad \text{weakly in } L^2(K).
\end{equation}
\end{lemma}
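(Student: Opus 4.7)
My plan is to reduce every claim to estimates for the single-cell object $\chi^\eta$ on $\eta^{-1}Q$ by combining the change of variables $y = x/(\eps\eta)$ with a periodic-covering argument. Since $\chi^\eta$ is $\eta^{-1}\Z^d$-periodic, the bounded set $K/(\eps\eta)$ can be covered by $O(\eps^{-d})$ translates of the fundamental cell, so for any periodic function $f$ we have $\|f\|_{L^p(K/(\eps\eta))}^p \le C|K|\eps^{-d}\|f\|_{L^p(\eta^{-1}Q)}^p$.

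For (1), apply this with $f = \nabla \chi^\eta$ and $p=2$ together with the Jacobian $(\eps\eta)^d$:
\[
\|\nabla v^\eps\|_{L^2(K)}^2 = (\eps\eta)^{d-2}\|\nabla\chi^\eta\|_{L^2(K/(\eps\eta))}^2 \le C(\eps\eta)^{d-2}\eps^{-d}\|\nabla\chi^\eta\|_{L^2(\eta^{-1}Q)}^2 \le C/\sigma_\eps^{2},
\]
where the final step uses \eqref{eq:cell_C}. For (2), which is meaningful in the vanishing-fraction regime $\eta \to 0$, split $v^\eps - c_* = (v^\eps - c_\eta) + (c_\eta - c_*)$; the second summand tends to zero by Lemma~\ref{lem:celld3}. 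The same rescaling applied to $\chi^\eta - c_\eta$ gives $\|v^\eps - c_\eta\|_{L^2(K)}^2 \le C|K|\eta^d\|\chi^\eta - c_\eta\|_{L^2(\eta^{-1}Q)}^2$, and combining Hölder's inequality with the scaling-invariant Sobolev bound $\|\chi^\eta - c_\eta\|_{L^{2d/(d-2)}(\eta^{-1}Q)} \le C\|\nabla\chi^\eta\|_{L^2(\eta^{-1}Q)}$ already used in the proof of Lemma~\ref{lem:celld3} yields $\|\chi^\eta - c_\eta\|_{L^2(\eta^{-1}Q)} \le C\eta^{-1}$, so the overall upper bound is $C\eta^{d-2}\to 0$.

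For (3), at the critical scaling $\sigma_\eps\sim 1$, (1) gives $\nabla v^\eps$ uniformly bounded in $L^2(K)$ while (2) gives $v^\eps \to c_*$ strongly in $L^2(K)$. For any vector test function $\varphi \in C^\infty_c(K;\R^d)$, integration by parts together with these convergences gives $\int_K \nabla v^\eps\cdot\varphi\,dx = -\int_K v^\eps\,\div\varphi\,dx \to -c_*\int_K \div\varphi\,dx = 0$; by uniqueness of the weak $L^2$-limit, the whole sequence $\nabla v^\eps$ converges weakly to zero.

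The main subtlety is the balance of scalings in (2): the periodic cover loses a volume factor $\eta^{-d}$ and Hölder costs another factor $\eta^{-2}$, both exactly compensated by the $(\eps\eta)^d$ Jacobian, leaving $\eta^{d-2}$; this is where the hypothesis $d\ge 3$ is used essentially.
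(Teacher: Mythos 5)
Your proofs of \eqref{eq:gvbdd} and \eqref{eq:cp_lim=c*} are correct and essentially identical in method to the paper's: change of variables $y=x/(\eps\eta)$, a periodic covering of $K/(\eps\eta)$ by $O(\eps^{-d})$ translates of $\eta^{-1}Q$, and the uniform bounds from \eqref{eq:cell_C} and the scale-invariant Sobolev embedding. The only cosmetic difference in \eqref{eq:cp_lim=c*} is that the paper works directly with the $L^{2^*}$-norm (getting $\|v^\eps-c_*\|_{L^{2^*}(K)}^{2^*}\le C(|c_\eta-c_*|^{2^*}+\eta^d)$), while you pass to $L^2$ via H\"older on the fundamental cell; both yield the same $\eta^{(d-2)/2}$ rate and both use $d\ge 3$ in the same place.

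Your proof of \eqref{eq:gvwl} takes a genuinely different route. The paper fixes $\psi\in C_c^\infty$, breaks $\int\partial_j v^\eps\psi$ over the $\eps$-cells, uses periodicity to kill the leading (cell-averaged) contribution, and estimates the remainder $r_\eps(k)$ by the mean value theorem. You instead integrate by parts against $\varphi\in C^\infty_c(K;\R^d)$, obtaining $\int_K\nabla v^\eps\cdot\varphi=-\int_K v^\eps\,\div\varphi\to -c_*\int_K\div\varphi=0$, and then combine this with the uniform $L^2$-bound on $\nabla v^\eps$ from part \eqref{eq:gvbdd} at the critical scaling. This is cleaner and more robust: it sidesteps the explicit per-cell bookkeeping, and it immediately extends from distributional convergence to weak $L^2$-convergence by density and boundedness. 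The tradeoff is that your argument relies on having already proved \eqref{eq:cp_lim=c*}, whereas the paper's estimate of $r_\eps(k)$ is self-contained and gives an explicit decay rate ($\sim\eta^{d/2-1}$) that could be useful for quantification. Both are correct.

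One remark on scope: you restrict \eqref{eq:cp_lim=c*} to $\eta\to 0$, calling that the regime where the statement is ``meaningful.'' That restriction is in fact necessary: for $\eta\equiv 1$, $v^\eps(x)=\chi(x/\eps)$ oscillates at scale $\eps$ and converges to $c_*=\langle\chi\rangle_Q$ only weakly, not strongly, in $L^2$; $\|v^\eps\|_{L^2(K)}^2\to|K|\langle\chi^2\rangle_Q>|K|c_*^2$. The paper's one-line appeal to Riemann--Lebesgue in that case only gives weak convergence, so strictly speaking the $\eta=1$ case of \eqref{eq:cp_lim=c*} is overstated (and indeed never used: the proof of Theorem \ref{thm:homog} in the $a_\eps=\eps$ regime uses only boundedness and weak convergence of $v^\eps$). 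Your implicit restriction is therefore the right call, though it would be worth saying explicitly that you are \emph{only} proving \eqref{eq:cp_lim=c*} for $\eta\to 0$.
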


\begin{remark} The estimate \eqref{eq:gvbdd} show that $\nabla v^\eps$ converges strongly to zero in the super-critical setting. For the critical ratio, \eqref{eq:gvbdd} says that $\nabla v^\eps \to 0$ holds but weakly. We remark also that, in the setting of vanishing volume fractions of hole, i.e. $a_\eps \ll \eps$, the convergence in \eqref{eq:cp_lim=c*} holds in $L^p$ for any $p \in [1,\frac{2d}{d-2}]$. This is a fact proved below.
\end{remark}

\begin{proof} We first prove the bound \eqref{eq:gvbdd}. By definition, we have the expression
\begin{equation*}
\nabla v^\eps(x) = \frac{1}{\eps \eta} \nabla \chi^\eta\left(\frac{x}{\eps \eta}\right),
\end{equation*}
The desired estimate can then be proved by breaking down the integral of $|\nabla v^\eps|^2$ into each $\eps$-square inside $K$, rescaling the $\eps$-squares to $\frac{1}{\eta}$-squares, and then applying \eqref{eq:cell_C}. 

Next, we prove \eqref{eq:cp_lim=c*}. In the setting of holes with positive volume fraction, i.e. $a_\eps = \eps$, it follows directly from the Riemann-Lebesgue lemma. We hence assume that the holes have vanishing volume fraction. The main idea is to use periodicity of $v^\eps$ and Lemma \ref{lem:celld3}. Let $\mathcal{I}_\eps = \mathcal{I}_\eps(K)$ denote the integer points in $k \in \Z^d$ such that $\eps (k+Q)$ has nonempty intersection with $K$. The cardinality of $\mathcal{I}_\eps$ is of order $\eps^{-d}$. Then, with $2^*$ representing the exponent $2d/(d-2)$, we compute
\begin{equation}
\label{eq:c_conv}
\begin{aligned}
\|v^\eps - c_*\|_{L^{2^*}(K)}^{2^*} &= \sum_{k \in \mathcal{I}_\eps} \int_{\eps (k+Q)} \left| \chi^\eta(\textstyle \frac{x}{\eps \eta}) - c_*\right|^{2^*}dx\\
&= \sum_{k \in \mathcal{I}_\eps} \eps^d \eta^d \int_{\frac{1}{\eta}(k+Q)} \left| \chi^\eta(y) - c_*\right|^{2^*}dx\\
&\le \sum_{k \in \mathcal{I}_\eps} C\eps^d \left(|c_\eta - c_*|^{2^*} + \eta^d \int_{\frac{1}{\eta}Q} \left| \chi^\eta(y) - c_\eta\right|^{2^*}dx\right)\\
&\le C\left(|c_\eta - c_*|^{2^*}+\eta^d\right).
\end{aligned}
\end{equation}
In the third line, we used H\"oler inequality and the constant depends only on $d$, and in the last line, we used \eqref{eq:chi2star}. In view of $\eta = a_\eps/\eps \to 0$ and Lemma \ref{lem:celld3}, we conclude that the right hand side above vanishes in the limit.

\medskip

Now we prove \eqref{eq:gvwl}. This is essentially the Riemann-Lebesgue lemma, although there is another parameter $\eta_\eps$ that goes to zero with $\eps$. We note that $\|\nabla v^\eps\|_{L^2(K)}$ is bounded in view of \eqref{eq:gvbdd}, since $a_\eps = a^\eps_*$ implies $\sigma_\eps = 1$. Fix an arbitrary smooth test function $\psi $ compactly supported in $D$. Then
\begin{equation*}
\begin{aligned}
\int_D \partial_j v^\eps \psi &= \sum_{k \in \mathcal{I}_\eps} \int_{\eps(k+ Q)} \frac{1}{\eps \eta} (\partial_j \chi^\eta)\left(\frac{x}{\eps \eta}\right) \psi(x) dx\\
&= \sum_{k \in \mathcal{I}_\eps} \int_{\eta^{-1}(k+Q)} (\eps \eta)^{d-1} (\partial_j \chi^\eta)(y) \psi(\eps k + \eps \eta y) dy\\
&= \sum_{k \in \mathcal{I}_\eps} \left(\psi(\eps k) \int_{\eta^{-1}(k+Q)} (\eps \eta)^{d-1} (\partial_j \chi^\eta)(y) dy + r_\eps(k)\right).
\end{aligned}
\end{equation*}
In the last line we add and subtract $\psi(\eps k)$ in each $\eps$-cube. Then the first term vanishes due to periodicity. For the remainder $r_\eps(k)$, we use the mean value theorem on $\psi$ and get
\begin{equation*}
\begin{aligned}
|r_\eps(k)| &= \left|\int_{\eta^{-1}(k+Q)}  (\eps\eta)^{d-1} \partial_j\chi^\eta(y) [\psi(\eps k + \eps \eta y) - \psi(\eps k)] dy \right|\\
&\le (\eps \eta)^d \|D\psi\|_{L^\infty} \|\nabla \chi^\eta\|_{L^2(\eta^{-1}(k+Q))}|\eta^{-1}(k+Q)|^{\frac12}\\
&\le C\eps^d \eta^{\frac d2} \|D\psi\|_{L^\infty}.
\end{aligned}
\end{equation*}
Note that the bound above is uniform in $k$. Plugging this into the summation above, we get
\begin{equation*}
	\lim_{\eps \to 0} \int_D \partial_j v^\eps \psi \, dx = 0.
\end{equation*}
This completes the proof of the theorem.
\end{proof}


\section{A unified proof for periodic homogenizations}
\label{sec:otest}

In this section, we prove Theorem \ref{thm:homog} by a unified approach based on the oscillating test function method. The oscillatory test functions are constructed using $v^\eps$ of last section. Again, we focus on the case of $d\ge 3$ first.

Here is an outline of the method: the starting point is to conclude from the uniform estimates \eqref{eq:ubdd} that $\nabla \tilde u^\eps/(1\wedge\sigma_\eps)$ and $\tilde u^\eps/(1\wedge \sigma^2_\eps)$ converges weakly in $L^2(D)$ and in $H^1(D)$ respectively. Then homogenization is proved by showing that the possible limits are uniquely determined.

To determine the limits, fix an arbitrary smooth function $\varphi \in C^\infty(D)$ with compact support in $D$, consider the oscillating function $\varphi v^\eps$ which belongs to $H^1_0(D^\eps)$ and test it against the equation of $u^\eps$, i.e. \eqref{eq:ppeps}. We then get
\begin{equation*}
\int_D \varphi \nabla \tilde u^\eps \cdot \nabla v^\eps  + \int_D v^\eps \nabla \tilde u^\eps \cdot \nabla \varphi = \int_D f \varphi v^\eps.
\end{equation*}
On the other hand, the oscillatory function $\varphi \tilde u^\eps$ also belongs to $H^1_0(D)$ and we test it against the equation of $v^\eps$, that is \eqref{eq:cell_01}. Then we get
\begin{equation*}
\int_D \varphi \nabla v^\eps \cdot \nabla \tilde u^\eps + \int_D \tilde u^\eps \nabla v^\eps \cdot \nabla \varphi = \frac{1}{\sigma_\eps^2} \int_D \tilde u^\eps \varphi.
\end{equation*}
Subtracting the two identities above, one obtains
\begin{equation}
\label{eq:uv}
\int_D v^\eps \nabla \tilde u^\eps \cdot \nabla \varphi  - \int_D \tilde u^\eps \nabla v^\eps \cdot \nabla \varphi  = \int_D   \varphi f v^\eps - \int_D \varphi \frac{\tilde u^\eps}{\sigma_\eps^2}. 
\end{equation}

We then pass the limit $\eps \to 0$ in \eqref{eq:uv}, and characterize the limits of each terms according to different regimes of $a_\eps/\eps$. Let us number the integrals there by $I_1, I_2$, $I_3$ and $I_4$ according to the order of their appearances. We study them case by case for all regimes of hole-cell ratios.

\subsection{The case of super-critical hole-cell ratio} In this setting, $\sigma_\eps \to 0$. We assume that through a subsequence still denoted by $\tilde u^\eps$, we have $\tilde u^\eps/\sigma^2_\eps$ converges weakly to $u \in L^2(D)$. To pass to the limit in \eqref{eq:uv} and identify $u$, we distinguish two two subcases: positive volume fraction of holes (for simplicity $a_\eps = \eps$) and vanishing volume fraction ($a_\eps \ll \eps$).

\begin{proof}[Proof of part (1) in Theorem \ref{thm:homog}] Consider the case $a_\eps = \eps$. We have $\eta = 1$, $\sigma_\eps = \eps$ and $v^\eps$ is $\eps$-periodic. This is the classical homogenization setting and we have recalled the formal two-scale expansion argument in the Introduction. Here we give the rigorous proof. In view of Lemma \ref{lem:cell_1} and the estimate \eqref{eq:ubdd}, we have
\begin{equation*}
|I_1| \le \|v^\eps\|_{L^2(D)}\|\nabla \tilde u^\eps\|_{L^2(D)} \|\nabla \varphi\|_{L^\infty} \le C\eps \|\nabla \varphi\|_{L^\infty},
\end{equation*}
which converges to zero as $\eps \to 0$; similarly, we have
\begin{equation*}
|I_2| \le \|\tilde u^\eps\|_{L^2(D)} \|\nabla v^\eps\|_{L^2(D)} \|\nabla \varphi\|_{L^\infty} \le C\eps^2 \eps^{-1}\|\nabla \varphi\|_{L^\infty} ,
\end{equation*}
which converges to zero as well. For $I_3$, by the Riemann-Lebesgue lemma we have
\begin{equation}
\lim_{\eps \to 0} I_3 = \langle \chi \rangle_{Q} \int_D f\varphi = c_*\int_D f\varphi,
\end{equation}
where $\chi$ is the unique solution of the standard cell problem \eqref{eq:cellprob}, and we have used the definition $c_* = \langle \chi \rangle_Q$ of this setting. Finally, since $\tilde u^\eps/\sigma_\eps^2$ converges weakly in $L^2$, we have
\begin{equation*}
 \lim_{\eps \to 0} I_4 = \int_D \varphi u.
 \end{equation*} 
Hence, we showed that the limit of \eqref{eq:uv} reads
\begin{equation}
\label{eq:supc}
\int_D (c_*f - u) \varphi = 0.
\end{equation}
Since $\varphi$ is arbitrary, we conclude that $u = c_* f$.

\medskip

Consider the case $a_\eps \ll \eps$. Then the supercritical condition $\lim_{\eps \to 0} \sigma_\eps = 0$ implies that $a^\eps_* \ll a_\eps$. By Lemma \ref{lem:cell_1} and the remark there, we know $v^\eps$ is uniformly bounded in $L^{2^*}(D)$, where $2^* = \frac{2d}{d-2}$. Hence,
\begin{equation*}
|I_1| \le \|v^\eps\|_{L^{2^*}(D)}\|\nabla \tilde u^\eps\|_{L^2(D)} \|\nabla \varphi\|_{L^d(D)} \le C\sigma_\eps \|\nabla \varphi\|_{L^d},
\end{equation*}
On the other hand, by \eqref{eq:ubdd} and by \eqref{eq:gvbdd}, we have
\begin{equation*}
|I_2| \le \|\tilde u^\eps\|_{L^2(D)} \|\nabla v^\eps\|_{L^2(D)} \|\nabla \varphi\|_{L^\infty(D)} \le C\sigma_\eps\|\nabla \varphi\|_{L^\infty}.
\end{equation*}
Both terms vanishes in the limit. For $I_3$, we apply Lemma \ref{lem:cell_1}, in particular the fact that $v^\eps$ converges strongly in $L^{2^*}(D)$ (hence also in $L^2(D)$) to $c_* = 1/\mathrm{Cap}(T)$. This shows: 
\begin{equation}
\lim_{\eps \to 0} I_3 = \frac{1}{\mathrm{Cap}(T)} \int_D f\varphi.
\end{equation}
Convergence of $I_4$ is straightforward as before. Then \eqref{eq:supc} follows again and we get $u = c_*f$ with $c_* = 1/\mathrm{Cap}(T)$. This completes the proof of the first part of Theorem \ref{thm:homog}.
\end{proof}

\medskip

\subsection{The case of critical hole-cell ratio} Here $a_\eps$ is critical and comparable to $a^\eps_*$. For simplicity, assume $\lim_{\eps \to 0} \sigma_\eps = 1$. In view of \eqref{eq:ubdd}, we assume that along a subsequence still denoted by $\tilde u^\eps$, the function $\tilde u^\eps$ converges weakly in $H^1$ to $u \in H^1_0(D)$. Then $\nabla \tilde u^\eps$ converges weakly in $L^2$ to $\nabla u$. 

\begin{proof}[Proof of part (2) in Theorem \ref{thm:homog}] It remains to study the limit of \eqref{eq:uv} to identify $u$. For $I_1$, we use the strong convergence \eqref{eq:cp_lim=c*} and the weak convergence of $\nabla \tilde u^\eps$ to conclude that
\begin{equation*}
\lim_{\eps \to 0} I_1 = \lim_{\eps \to 0} \, c_* \int_D \nabla \tilde u^\eps \cdot \nabla \varphi = c_* \int_D \nabla u \cdot \nabla \varphi,
\end{equation*}
where $c_*$ is $1/\mathrm{Cap}(T)$ according to \eqref{eq:chat}. For $I_2$, we use the weak convergence of $\nabla v^\eps$ in \eqref{eq:gvwl} and the strong convergence of $\tilde u^\eps$, and get
\begin{equation*}
\lim_{\eps \to 0} I_2 = \lim_{\eps \to 0} \int_D \nabla v^\eps \cdot \nabla \varphi \, u = 0.
\end{equation*}
The terms $I_3$ and $I_4$ are trivial in this setting, and we have
\begin{equation*}
\lim_{\eps \to 0} I_3 = \int_D c_* f\varphi, \qquad \lim_{\eps \to 0} I_4 = \int_D fu.
\end{equation*}
Hence, passing to the limit $\eps \to 0$ on \eqref{eq:uv}, we obtain:
\begin{equation}
\int_D c_*\left(\nabla u \cdot \nabla \varphi - f\varphi + \frac{1}{c_*} u \varphi\right) = 0.
\end{equation}
Note that we used the positivity of $c_*$ which is clear. 

Finally, since $\varphi$ is arbitrary, the above implies that the limit function $u \in H^1_0(D)$ is a solution to 
\begin{equation*}
-\Delta u + \frac1{c_*} u = f \quad \text{in } D.
\end{equation*}
Note $1/c_* = \mathrm{Cap}(T)$ is positive. The solution to this Dirichlet problem is unique. This completes the proof of the second item of Theorem \ref{thm:homog}.
\end{proof}

\medskip

\subsection{The case of sub-critical hole-cell ratios} Now we consider the remaining situation where $a_\eps \ll a^\eps_*$ or $\lim_{\eps \to 0} \sigma_\eps = \infty$. Again, by the uniform estimates \eqref{eq:ubdd} we take a subsequence still denoted by $\tilde u^\eps$ for which $\tilde u^\eps$ converges weakly in $H^1$ and strongly in $L^2$ to $u \in H^1_0(D)$.

\begin{proof}[Proof of part (3) in Theorem \ref{thm:homog}] We study the limit of \eqref{eq:uv} to identify the limit $u$. The terms $I_1$ and $I_3$ can be treated exactly as in the case of critical hole-cell size ratio. The limits there still hold. The limit of $I_4$ vanishes since $\tilde u^\eps$ is bounded while $\sigma_\eps \to \infty$. It remains to study $I_2$. We use \eqref{eq:gvbdd} to conclude that
\begin{equation*}
|I_2| \le \|\nabla v^\eps\|_{L^2}\|\tilde u^\eps\|_{L^2} \|\nabla \varphi\|_{L^\infty} \le C\sigma_\eps^{-1} \to 0.
\end{equation*}
Combine those results; we see that \eqref{eq:uv} becomes
\begin{equation}
\int_D c_*\left(\nabla u \cdot \nabla \varphi - f\varphi\right) = 0
\end{equation}
in the limit. By the positivity of $c_*$ and the arbitrariness of $\varphi$, we conclude that the only possible limit $u \in H^1_0(D)$ is the unique solution to the problem \eqref{eq:pp_subc}. This completes the proof of item three of Theorem \ref{thm:homog}.
\end{proof}

\section{On the correctors and error estimates}
\label{sec:error}

In this section, we quantify the unified proof of the last section to get the error estimates in \ref{thm:homog}, for the cases when the holes have vanishing volume fractions. It is remarkable that this quantification is quite easy to do using tools from potential theory. 

It is almost clear from the proof of Theorem \ref{thm:homog} that, to quantify the homogenization error, we need to find the convergence rate of $v^\eps \to c_*$ in $L^2$. This is done in the next key lemma, for $d\ge 3$.

\begin{lemma}\label{lem:rvtc} Assume {\upshape (P1)} and $d \ge 3$. For each $a_\eps$ and $\eps$, let $\eta = a_\eps/\eps$, let $v^\eps$ be defined as in \eqref{eq:cell_01} and let $c_*$ be defined as in \eqref{eq:chat}. Let $K$ be an open and bounded set in $\R^d$ with smooth boundary. Then there exists a positive constant $C$ depending only on $K$, $T$ and $d$, such that
\begin{equation*}
\|v^\eps - c_*\|_{L^{2^*}(K)} \le C\eta^{\frac{d-2}{2}}.
\end{equation*}
\end{lemma}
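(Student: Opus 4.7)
The plan is to reduce the lemma to a quantitative rate on $|c_\eta - c_*|$ and then establish that rate by a variational comparison. The chain \eqref{eq:c_conv} in the proof of Lemma~\ref{lem:cell_1} already yields
\[
\|v^\eps - c_*\|_{L^{2^*}(K)}^{2^*} \le C\bigl(|c_\eta - c_*|^{2^*} + \eta^d\bigr),
\]
and since $\tfrac{d-2}{2}\cdot 2^* = d$, the lemma reduces to proving $|c_\eta - c_*| \le C\eta^{(d-2)/2}$ (or anything sharper).

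My main tool would be the pair of energy identities $c_\eta = \|\nabla \chi^\eta\|^2_{L^2(\eta^{-1}Q)}$ (obtained by testing \eqref{eq:cell_2} against $\chi^\eta$) and $c_* = c_*^2\,\mathrm{Cap}(T) = \|c_*\nabla w\|^2_{L^2(\R^d\setminus T)}$ (from \eqref{eq:capdef}), together with the fact that $\chi^\eta$ minimizes $J(u) := \int |\nabla u|^2 - 2\eta^d \int u$ among $\eta^{-1}\Z^d$-periodic $u$ vanishing on $\partial T$, with $\min J = -c_\eta$. Fix a smooth cutoff $\xi_\eta$ with $\xi_\eta\equiv 1$ on $\{|x|_\infty \le 1/(4\eta)\}$, strict support in $\{|x|_\infty \le 1/(2\eta)\}$, and $\|\nabla\xi_\eta\|_{L^\infty}\le C\eta$, and take $\psi^\eta := c_*(1-w\xi_\eta)$. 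Since $\psi^\eta$ vanishes on $\partial T$ and equals $c_*$ on a neighborhood of each cell boundary (so that its periodic extension glues smoothly), it is admissible in $J$. Using the standard decay $|w(x)| \le C|x|^{-(d-2)}$ and $|\nabla w(x)| \le C|x|^{-(d-1)}$ of the Newtonian capacity potential to control the cutoff errors on the annulus $A_\eta := \{1/(4\eta) \le |x|_\infty \le 1/(2\eta)\}$, one obtains $\|\nabla\psi^\eta\|^2 \le c_* + C\eta^{d-2}$ and $\eta^d\int\psi^\eta \ge c_* - C\eta^{d-2}$, hence $J(\psi^\eta) \le -c_* + C\eta^{d-2}$ and the one-sided bound
\[
c_\eta \;\ge\; c_* - C\eta^{d-2}.
\]

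For the matching upper bound $c_\eta \le c_* + C\eta^{d-2}$, I would analyze the remainder $R^\eta := \chi^\eta - \psi^\eta$, which vanishes on $\partial T$, is periodic, and satisfies $-\Delta R^\eta = \eta^d - c_*(2\nabla w\cdot \nabla\xi_\eta + w\Delta\xi_\eta)$ on the fluid part, with the source correction supported in $A_\eta$. The key algebraic identity, obtained from the divergence theorem combined with $\Delta w = 0$ outside $T$ and the flux identity $\int_{\partial K}\partial_\nu w = -\mathrm{Cap}(T)$ for any $K \supset T$, is $\int_{A_\eta}\Delta(w\xi_\eta) = \mathrm{Cap}(T) = 1/c_*$, which forces the cross term $\int \nabla\psi^\eta\cdot\nabla R^\eta$ to cancel down to $O(\eta^{d-2})$. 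Bootstrapping via $\|\nabla R^\eta\|^2 = \int R^\eta(-\Delta R^\eta)$, scale-invariant Sobolev--Poincaré control of $\|R^\eta\|_{L^{2^*}(\eta^{-1}Q)}$, and the qualitative smallness $c_\eta \to c_*$ from Lemma~\ref{lem:celld3}, one arrives at $\|\nabla R^\eta\|^2 = O(\eta^{d-2})$, and the expansion $c_\eta = \|\nabla\psi^\eta\|^2 + 2\int\nabla\psi^\eta\cdot\nabla R^\eta + \|\nabla R^\eta\|^2$ then closes the upper bound. The main obstacle is this remainder estimate, in particular the bootstrap absorption; a cleaner alternative, closer to the periodic-layer-potential strategy referenced in the paper's introduction, is to represent $\chi^\eta$ via the torus Green's function $G_\eta$ and invoke the large-box expansion $G_\eta = G + \eta^{d-2}\widetilde G(\eta\,\cdot\,)$ to read off $|c_\eta - c_*| = O(\eta^{d-2})$ directly. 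In either case the resulting bound is far stronger than the required $\eta^{(d-2)/2}$, and the lemma follows via the reduction of the first paragraph.
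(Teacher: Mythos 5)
Your proof is correct in its conclusion but takes a genuinely different route from the paper's for the key estimate on $|c_\eta - c_*|$; the reduction to that estimate via the chain in \eqref{eq:c_conv} is exactly the paper's Step~3. The paper's Steps~1--2 are potential-theoretic: it writes $\chi^\eta = G^\eta + \phi^\eta$ with $G^\eta$ the torus Green's function, represents $\phi^\eta$ by a periodic double-layer potential, decomposes $L^2(\partial T)$ using the Neumann--Poincar\'e operator $\cK_T$, and extracts $|c_\eta - c_*| \le C\eta^{d-2}$ from the expansion $G^\eta = \Gamma + \eta^{d-2}R(\eta\,\cdot)$ together with an $O(\eta^{-1})$ bound on the density $\|\psi^\eta_1\|_{L^2(\partial T)}$, obtained by perturbation of $-\frac12 I + \cK_T$. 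Your variational route replaces all of this. The lower bound $c_\eta \ge c_* - C\eta^{d-2}$ drops out cleanly from the Dirichlet principle with the competitor $\psi^\eta = c_*(1-w\xi_\eta)$, and your remainder strategy is also correct, but the crux is the cancellation you only allude to: after testing $-\Delta R^\eta = \eta^d - c_*\Delta(w\xi_\eta)$ against $R^\eta$, the constant-mode contributions $\eta^d\int R^\eta = \bar{R}^\eta$ and $\bar{R}^\eta\, c_*\int_{A_\eta}\Delta(w\xi_\eta)$ cancel exactly because $c_*\int\Delta(w\xi_\eta) = -c_*\int_{\partial T}\partial_\nu w = c_*\,\mathrm{Cap}(T) = 1$; only then does the scale-invariant Sobolev--Poincar\'e inequality (applied to $R^\eta - \bar{R}^\eta$, not to $R^\eta$ itself, for which the constant is not $\eta$-uniform) close the bootstrap and give $\|\nabla R^\eta\|_{L^2} \le C\eta^{(d-2)/2}$. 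From there Cauchy--Schwarz on the cross term $\int\nabla\psi^\eta\cdot\nabla R^\eta$ gives $|c_\eta - c_*| \le C\eta^{(d-2)/2}$, which is exactly enough for the lemma but not, as you assert, ``far stronger'' than required; the paper's sharper $O(\eta^{d-2})$ is recoverable in your framework by noting $\int\nabla\psi^\eta\cdot\nabla R^\eta = \bar{R}^\eta - \|\nabla R^\eta\|^2$ and comparing with $c_\eta = \eta^d\int\psi^\eta + \bar{R}^\eta$. The trade-off: the paper's route uses more machinery but ties naturally into its periodic layer-potential theme and yields the sharp rate automatically; yours is more elementary and self-contained, but the remainder argument hinges on the cancellation above to avoid a loss, and as written you have not supplied it.
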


Our proof below relies heavily on the potential theory for the Laplace operator. We refer the readers to the Appendix and the references therein for more details.

\begin{proof} {\itshape Step 1: The convergence rate of $c_\eta = \fint_{\eta^{-1}Q} \tilde\chi^\eta \to c_*$}. Here $\chi^\eta$ is the solution to \eqref{eq:cell_1}. Let $G^\eta$ be the Green's function of the Laplace operator in the flat torus $\eta^{-1}Q_\fp$; see \eqref{eq:Geta}. Our plan is to explore the explicit formula of $\chi^\eta$ given by the potential theory. We refer to Appendix \ref{app:lp} for the terminologies and notations. Let $\chi^\eta = G^\eta + \phi^\eta$. Then by definition $\phi^\eta$ satisfies
\begin{equation*}
-\Delta \phi^\eta = 0 \quad\text{on } \frac{1}{\eta} Q_{\fp}\setminus \ol T, \qquad \phi^\eta\vert_{\partial T} = g^\eta := -G^\eta\vert_{\partial T}.
\end{equation*}
In view of \eqref{eq:Geta_d}, $g^\eta$ is smooth on $\partial T$. By Theorem, $\mathrm{Ran}(-\frac12 I + \cK^{(\eta)}_{T,\fp})$ is the whole space and, hence, $\phi^\eta$ can be represented by a double layer potential. To find the structure of the potential, we decompose $g^\eta$ according to Corollary \ref{cor:Gam}: Let $X$ denote the subspace $\mathrm{Ran}(-\frac12 I + \cK_T)$ of $L^2(\partial T)$,then
\begin{equation*}
g^\eta = g^\eta_1 + r^\eta, \quad g^\eta_1 \in X \quad \text{and} \quad r^\eta = \langle g^\eta, \varphi_*\rangle.
\end{equation*}
As a result, $\phi^\eta$ can be represented by
\begin{equation}
\phi^\eta = \cD^{(\eta)}_{T,\fp} [\psi^\eta_1] + r^\eta, \quad\text{where} \quad \psi^\eta_1 = (-\frac12 I+ \cK^{\eta})^{-1} g_1^\eta.
\end{equation}

\medskip

Back to the function $\chi^\eta$, its average in $\frac1\eta Q$ can be written as
\begin{equation}
\label{eq:r_av}
c_\eta = r^\eta + \langle G^\eta \rangle_{\frac1\eta Q} + \langle \cD^{(\eta)}_{T,\fp} [\psi_1^\eta] \rangle_{\frac1\eta Q}.
\end{equation}
We estimates those numbers on the right hand side one by one. By Corollary \ref{cor:Gam}, we have
\begin{equation*}
\begin{aligned}
r^\eta &= -\langle G^\eta, \varphi_* \rangle_{L^2(\partial T), L^2(\partial T)} \\
&= -\langle \Gamma\big\rvert_{\partial T}, \varphi_* \rangle - \eta^{d-2} \langle R(\eta\cdot),\varphi \rangle = \frac{1}{\mathrm{Cap}(T)} - \eta^{d-2} \int_{\partial T} R(\eta x) \varphi_*(x).
\end{aligned}
\end{equation*}
Here, $R$ is the defined in the formula \eqref{eq:Geta_d}. This leads to
\begin{equation}
\label{eq:crate-1}
|r^\eta - c_*| \le \eta^{d-2}\|R\|_{L^2(\partial T)}.
\end{equation}

Secondly, for the average of $G^\eta$, we calculate
\begin{equation*}
\left| \frac{1}{|\eta^{-1} Q|} \int_{\frac1\eta Q \setminus \ol T} \Gamma(x) dx  \right| \le \eta^d \int_{B_{\frac1{2\eta}}} \Gamma + \eta^d \int_{\frac1\eta(Q\setminus \ol B_{1/2})} \Gamma \le C_d\,\eta^{d-2}. %
\end{equation*}
Then since $G^\eta(x) = \Gamma(x) + \eta^{d-2}R(\eta x)$ for all $x \in \frac1\eta Q\setminus \ol T$, we have
\begin{equation}
\label{eq:crate-2}
\left|\langle G^\eta \rangle_{\frac1\eta Q} \right| \le \langle \Gamma \rangle_{\frac1\eta Q} + \eta^{d-2}\|R\|_{L^\infty} \le C\eta^{d-2},
\end{equation}
where $C$ depends only on $d$ and $T$.

For the third term in the right hand side of \eqref{eq:r_av}, we note
\begin{equation*}
\cD^{(\eta)}_{T,\fp}[\psi_1^\eta] = \eta^d \int_{\partial T} \nu_y \cdot \nabla\Gamma(x-y) \psi^\eta_1(y) d\sigma + \eta^{d-1}\int_{\partial T} \nu_y \cdot (\nabla R)(\eta(x-y)) \psi^\eta_1(y) d\sigma.
\end{equation*}
Using the explicit form of $\nabla \Gamma$ and by the same method as above, we can average in $x$ in the first integral and estimate the result. For the second integral above, we note that $\|\nabla R\|_{L^\infty}$ is finite and depends only on $T$. We hence get
\begin{equation}
\label{eq:crate-3}
|\langle \cD^{(\eta)}_{T,\fp}[\psi_1^\eta]\rangle_{\eta^{-1}Q}| \le C\eta^{d-1}(1+\|\nabla R\|_{L^\infty})\|\psi^\eta_1\|_{L^2(\partial T)},
\end{equation}
for some constant $C$ that depends only on $d$ and $T$. We conclude that the rate of $c_\eta \to c_*$ can be characterized once we get a quantitative estimate of $\|\psi^\eta_1\|_{L^2(\partial T)}$ in terms of $\eta$. 

\smallskip

\noindent{\itshape Step 2: The estimate of $\|\psi^\eta_1\|_{L^2(\partial T)}$}. By the decomposition \eqref{eq:NPop-d}, the function $\psi^\eta_1$ is the unique solution to the equation
\begin{equation*}
\left(-\frac12 I + \cK_T + \cR^\eta \right)[\psi^\eta_1] = g^\eta_1.
\end{equation*}
where $\cR^\eta = \eta^{d-1}\cR_2$ and $\cR_2$ is defined in \eqref{eq:Rijdef}. We seek a solution of the form $\psi^\eta_1 = \xi_1 + \theta$ where $\xi_1 \in L^2_0(\partial T)$ and $\theta \in \R$. Plug this ansatz into the equation above, we find
\begin{equation}
\label{eq:psi1}
(-\frac12 I + \cK_T)[\xi_1] + \cR^\eta[\xi_1] -\eta^d|T|\theta = g^\eta_1.
\end{equation}
Project the above equation to $X$, and let $\Pi_X$ denote this projection operator. Recall that $g^\eta_1$ is in the range, we get
\begin{equation*}
(-\frac12 I + \cK_T)[\xi_1] + \Pi_X \cR^\eta[\xi_1] = g^\eta_1.
\end{equation*}
We know that $-\frac12 I + \cK_T$ is invertible from $L^2_0(\partial T)$ to $X$ with bounded operator norm; see Theorem \ref{thm:layerp}. On the other hand, $\|\cR^\eta\|_{L^2\to L^2}$ is of order $\eta^{d-1}$, and $\|\Pi_X\cR^\eta\|_{L^2_0\to X}$ is of the same order. By standard perturbation theory, see e.g. \cite[Theorem IV.1.16]{Kato}, for $\eta$ sufficiently small, the operator norm $\|(-\frac12 I + \cK_T + \Pi_X \cR^\eta\|_{L^2_0\to X}$ can be bounded by $C$ independent of $\eta$. We hence get
\begin{equation}
\label{eq:psi1-1}
\|\xi_1\|_{L^2(\partial T)} \le C\|g^\eta_1\|_{L^2(\partial T)} \le C.
\end{equation}

Now the projection of \eqref{eq:psi1} to the kernel of $-\frac12 I + \cK_T$ reads
\begin{equation*}
\eta^d\theta|T| = \langle \cR^\eta[\xi_1], \varphi_*\rangle = \langle \xi_1, (\cR^\eta)^* \varphi_* \rangle
\end{equation*}
We then get the estimate
\begin{equation}
\label{eq:psi1-2}
|\theta||T| \le \frac{1}{\eta^d}\|\xi_1\|_{L^2(\partial T)} \|(\cR^{\eta})^* \varphi_*\|_{L^2} \le \frac{1}{\eta}\|\xi_1\|_{L^2(\partial T)}\|\varphi_*\|_{L^2(\partial T)} \le C\eta^{-1}.
\end{equation}
Here we used the fact that $\|(\cR^\eta)^*\|_{L^2\to L^2} \le C\eta^{d-1}\|\nabla R\|_{L^\infty}$. Combine \eqref{eq:psi1-1} with \eqref{eq:psi1-2} we get $\|\psi^\eta_1\|_{L^2(\partial T)} \le C\eta^{-1}$. Using this estimate in \eqref{eq:crate-3} we get
\begin{equation*}
\left|\langle \cD^{(\eta)}_{T,\fp}[\psi^\eta_1] \rangle_{\eta^{-1}Q} \right| \le C\eta^{d-2}.
\end{equation*}
Combine this estimate with \eqref{eq:crate-1} and \eqref{eq:crate-2}, we finally obtain, for $d\ge 3$,
\begin{equation}
\label{eq:crate-4}
|c_\eta - c_*| \le C\eta^{d-2},
\end{equation}
where the constant $C$ depends only on $d$ and $T$.

\smallskip

\noindent{\itshape Step 3: Convergence rate for $\|v^\eps - c_*\|_{L^{2^*}(K)} \to 0$.} We examine the proof of Lemma \ref{lem:cell_1}, and apply \eqref{eq:crate-4} in the last line of \eqref{eq:c_conv}. We get
\begin{equation*}
\|v^\eps - c_*\|_{L^{2^*}(K)}^{2^{*}} \le C\left((\eta^{(d-2)/2})^{2^*} + \eta^d\right) \le C\eta^d.
\end{equation*}
where the constant $C$ depends only on $d$, $T$ and $K$. This completes the proof of Lemma \ref{lem:rvtc}.
\end{proof}
\subsection{Proof of Theorem \ref{thm:error}}

Now we are ready to prove the corrector and error estimates in Theorem \ref{thm:error}. We emphasize that those results are only for the setting with vanishing volume fraction of holes, i.e. $a_\eps \ll \eps$.

\begin{proof}[Proof of Theorem \ref{thm:error}] {\itshape Case 1: super-critical hole-cell ratios.} Here, $a^\eps_* \ll a_\eps \ll \eps$ and $\lim_{\eps \to 0} \sigma_\eps = 0$ hold. Moreover, the assumption that $u = \langle \chi \rangle_Q g \in W^{2,d}_0(D)$ implies that the source term in \eqref{eq:ppeps} satisfies $g \in W^{2,d}_0(D)$. 

Let $\eta = a_\eps/\eps$ and let $v^\eps$ be defined as in \eqref{eq:cell_01}. Define $\xi^\eps := u^\eps/\sigma^2_\eps - v^\eps g$; then $\xi^\eps \in H^1_0(D^\eps)$. Direct computation shows that
\begin{equation*}
-\Delta \xi^\eps = 
v^\eps \Delta g + 2\nabla v^\eps \cdot \nabla g \qquad \text{in } D^\eps.
\end{equation*}
Integrate the above equation against $\xi^\eps$; we get
\begin{equation*}
\int_{D^\eps} |\nabla \xi^\eps|^2 = \int_{D^\eps} \xi^\eps v^\eps \Delta g + 2\int_{D^\eps} \xi^\eps\nabla(v^\eps - c_*)\cdot \nabla g.
\end{equation*}
Note that in the last integral above, we inserted $-c_*$ in a gradient term which does not change the integral. We estimate
\begin{equation}
\label{eq:ee_1}
\begin{aligned}
\left| \int_{D^\eps} \xi^\eps v^\eps \Delta g \right| &\le \|\xi^\eps\|_{L^2(D^\eps)} \|v^\eps\|_{L^{2^*}(D^\eps)} \|\Delta g\|_{L^d(D)}\\
&\le \sigma_\eps \|\nabla\xi^\eps\|_{L^2(D^\eps)} \|v^\eps\|_{L^{2^*}(D^\eps)}\|\Delta g\|_{L^d(D)} .
\end{aligned}
\end{equation}
In the first line, we applied H\"older inequality and noted that $v^\eps$ in bounded in $L^{2^*}(D^\eps)$, as indicated by \eqref{eq:cp_lim=c*} and the remark there. In the second line, we used the Poincar\'e inequality. For the other integral, we compute
\begin{equation*}
\int_{D^\eps} \xi^\eps \nabla(v^\eps - c_*)\cdot \nabla g = \int_{D^\eps} \nabla \cdot \left[\xi^\eps(v^\eps-c_*)\nabla g\right] - (v^\eps - c_*) \nabla \xi^\eps\cdot \nabla g - \xi^\eps(v^\eps-c_*)\Delta g.
\end{equation*}
We view the right hand side as three integrals. Then the first one is zero since $\xi^\eps$ vanishes on $\partial D^\eps$. The third integral can be treated as in \eqref{eq:ee_1}. To estimate the second one, we have
\begin{equation*}
\left| \int_{D^\eps} (v^\eps - c_*) \nabla \xi^\eps\cdot \nabla g \right| \le \|v^\eps - c_*\|_{L^{2^*}(D^\eps)}\|\nabla \xi^\eps\|_{L^2(D^\eps)} \|\nabla g\|_{L^d(D)}.
\end{equation*}
Combining the estimates above and apply Lemma \eqref{lem:rvtc}, we get
\begin{equation*}
\|\nabla\tilde \xi^\eps\|_{L^2(D)} \le C\|g\|_{W^{2,d}(D)}[\sigma_\eps + \|v^\eps-c_*\|_{L^2(D^\eps)}] \le C(\sigma_\eps + \eta^{\frac{d-2}{2}})\|g\|_{W^{2,d}(D)}.
\end{equation*}
By the Poincar\'e inequality, we also have
\begin{equation*}
\|\tilde \xi^\eps\|_{L^2(D)} \le C(\sigma_\eps^2 + \eps)\|g\|_{W^{2,q}(D)}.
\end{equation*}
Here, we used the relation that $\eta^{\frac{d-2}{2}} = \eps/\sigma_\eps$. We hence proved the first two estimates in part (1) of Theorem \ref{thm:error}. The third inequality follows from the relation that
\begin{equation*}
\frac{\tilde u^\eps}{\sigma_\eps^2} - c_*g = \frac{\tilde u^\eps}{\sigma_\eps^2} - v^\eps g + (v^\eps - c_*)g,
\end{equation*}
the result of Lemma \ref{lem:rvtc} and the triangle inequality.

\medskip

{\itshape Case 2: Critical hole-cell ratio.} Now we consider the case of $a_\eps \sim a^\eps_*$ which also implies that $\sigma_\eps \sim 1$. For simplicity, we assume that $\sigma_\eps = 1$ for the sequence of $\eps$ that converges to $0$. Then $\eta^{\frac{d-2}{2}} = \eps$.

Similar to the previous case, let $\xi^\eps = u^\eps - c_*^{-1} v^\eps u$. Then $\xi^\eps \in H^1_0(D^\eps)$ and it solves the equation
\begin{equation*}
-\Delta \xi^\eps = \frac1{c_*}(v^\eps - c_*) \Delta u + \frac{2}{c_*}\nabla v^\eps\cdot \nabla u \qquad \text{in } D^\eps.
\end{equation*}
Integrate against $\xi^\eps$, we have
\begin{equation*}
\begin{aligned}
\int_{D^\eps} |\nabla \xi^\eps|^2 &= \frac{1}{c_*}\int_{D^\eps} (v^\eps-c_*)\xi^\eps \Delta u + \frac{2}{c_*}\int_{D^\eps} \xi^\eps \nabla (v^\eps- c_*)\cdot \nabla u\\
&= -\frac{1}{c_*} \int_{D^\eps} (v^\eps-c_*)\xi^\eps \Delta u - \frac{2}{c_*} \int_{D^\eps} (v^\eps-c_*) \nabla \xi^\eps \cdot \nabla u.
\end{aligned}
\end{equation*}
We then note that $\tilde \xi \in H^1_0(D)$, and by Poincar\'e inequality we get
\begin{equation*}
\|\nabla \tilde \xi^\eps\|^2_{L^2(D)} \le C\|v^\eps-c_*\|_{L^{2^*}(D^\eps)}\|\nabla \tilde\xi^\eps\|_{L^2} (\|\Delta u\|_{L^d(D)} + \|\nabla u\|_{L^d(D)}).
\end{equation*}
By Lemma \ref{lem:rvtc} and by the Poincar\'e inequality again we finally get
\begin{equation*}
\|\xi^\eps\|_{H^1(D)} \le C\eta^{\frac{d-2}{2}}\|u\|_{W^{2,d}(D)} = C\eps\|u\|_{W^{2,d}(D)}.
\end{equation*}
This finishes the proof of the first estimate in item two of Theorem \ref{thm:error}; the other estimates follows, again, by adding the correction $(c_*^{-1}v^\eps - 1)u$ to $u$ and by the triangle inequality.

\medskip

{\itshape Case 3: Sub-critical hole-cell ratios.} Finally we consider the case of $a_\eps \ll a^\eps_*$ which implies that $\lim_{\eps\to 0} \sigma_\eps = \infty$. We still let $\xi^\eps$ denote the distance function $u^\eps - c_*^{-1}v^\eps u$. Then
\begin{equation*}
-\Delta \xi^\eps = -\frac{1}{c_*} \frac{u}{\sigma_\eps^2} + \frac1{c_*}(v^\eps - c_*) \Delta u + \frac{2}{c_*}\nabla v^\eps\cdot \nabla u \qquad \text{in } D^\eps
\end{equation*}
By exactly the same arguments of the previous cases and the usual Poincar\'e inequality $\|\tilde\xi^\eps\|_{L^2(K)} \le C(K)\|\nabla \xi^\eps\|_{L^2(K)}$, we get
\begin{equation*}
\begin{aligned}
\|\nabla \tilde \xi^\eps\|_{L^2(D)} &\le C(\sigma_\eps^{-2}\|u\|_{L^2(D)} +\|u\|_{W^{2,\infty}(D)}\|v^\eps-c_*\|_{L^2(D)})\\
&\le C(\sigma_\eps^{-2} + \eta^{\frac{d-2}{2}})\|u\|_{W^{2,d}(D)}.
\end{aligned}
\end{equation*}
This finish the proof of the first inequality in item three of Theorem \ref{thm:error}; the other one can be treated as before.
\end{proof}

\medskip


\medskip

\section{The two dimensional setting}
\label{sec:twod}

In this section we study the two dimensional setting. It is clear from the proofs in section \ref{sec:otest} and section \ref{sec:error} that, to get qualitative homogenization, we only needs to prove similar results to Lemma \ref{lem:cell_1}, and to get error and corrector estimates, we only need a quantification like that of Lemma \ref{lem:rvtc}. Those estimates are obtained in Lemma \ref{lem:celld2} below. With those results, the homogenization and corrector results can be proved by the oscillating test function method and its quantification in the earlier sections; we will not repeat those arguments here. 

The technical difference for $d=2$ comes from the energy estimates. In view of the Poincar\'e inequality \eqref{eq:poincare}, the solution to the cell problem \eqref{eq:cell_2} does not satisfy \eqref{eq:cell_C}; instead, we only have
\begin{equation}
\label{eq:en_d2}
\|\nabla \chi^\eta\|_{L^2(\eta^{-1}Q)} \le C|\log \eta|^{\frac12}, \qquad c_\eta = \fint_{\eta^{-1}Q} \chi^\eta \le C|\log \eta|.
\end{equation}
Since $|\log \eta|$ blows up as $\eta \to 0$, the analysis of the last two sections cannot be repeated. 




Inspired by Allaire \cite{Allaire-3}, we set $a_\eta = \frac1{2\pi}(1-\eta^2|T|)$, for each $\eta < 1/4$, and define the function
\begin{equation}
\label{eq:Phi2}
\Phi^\eta(x) = \begin{cases}
0, \quad &x \in B_{1},\\
a_\eta \log |x|, \quad& x \in B_{1/2\eta}\setminus \overline{B}_1,\\
a_\eta |\log \eta/2|, \quad& x \in \frac{1}{\eta}Q \setminus \overline{B}_{1/2\eta}.
\end{cases}
\end{equation}
The reason for this specific definition of $a_\eta$ will be clear later. Since $\overline T \subset B_{1/2}$, we have $\Phi^\eta = 0$ on $T$. Extend $\Phi^\eta$ to be $\eta^{-1}\Z$-periodic. The difference function $w^\eta := \chi^\eta - \Phi^\eta$ then satisfies
\begin{equation}
\label{eq:cell_w2}
\left\{
\begin{aligned}
&-\Delta w^\eta = \eta^2 - 2\eta a_\eta \delta_{S_{1/2\eta}} + a_\eta \delta_{S_1},  &\quad &x \in \eta^{-1} Q\setminus \ol T,\\
&w^\eta = 0, &\quad &x \in \partial T.&\\
\end{aligned}
\right.
\end{equation}
Here, $\delta_{S_r}$, $r = 1/2\eta$ or $1$, is the uniform measure concentrated on the sphere $S_r = \partial B_r(0)$. In other words, we have
\begin{equation*}
\delta_{S_r}(\varphi) := \int_{S_r} \varphi(y) \, d\sigma_y, \qquad \forall \varphi \in C^\infty_c(\R^d).
\end{equation*}
Those measures belong to $H^{-1}(\eta^{-1}Q)$, i.e. the dual space of $H^1(\eta^{-1}Q)$. Set $\nu^\eta := \eta^2 - 2 \eta a_\eta\delta_{S_{1/2\eta}}$. The specific choice of $a_\eta$ yields the following identity:
\begin{equation}
\label{eq:nu_a0}
\langle \nu^\eta, 1 \rangle_{H^{-1}(\eta^{-1}Q\setminus \ol T),H^1(\eta^{-1}Q\setminus \ol T)} = \int_{\eta^{-1}Q\setminus \ol T} \eta^2 - 2\eta a_\eta\delta_{S_{1/2\eta}} \, dy = 0.
\end{equation}

We check that $\|\nabla w^\eta\|_{L^2(\eta^{-1}Q)}$ can be controlled uniformly in $\eta$. 
\begin{lemma}
\label{lem:celld2} Assume {\upshape(P1)} and $d=2$. For each $\eta \in (0,1/4]$, let $\chi^\eta$ be as in \eqref{eq:cell_2} and $\Phi^\eta$ as in \eqref{eq:Phi2}, and set $w^\eta = \chi^\eta - \Phi^\eta$. Then there exists $C > 0$ that is independent of $\eta$ such that
\begin{equation}
\|\nabla w^\eta \|_{L^2(\eta^{-1}Q)} \le C.
\end{equation}
\end{lemma}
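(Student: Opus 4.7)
The plan is to run an energy estimate by pairing the equation with $w^\eta$ itself. Since $w^\eta$ vanishes on $\partial T$ and is $\eta^{-1}\Z^2$-periodic, integration by parts yields
\begin{equation*}
\|\nabla w^\eta\|^2_{L^2(\eta^{-1}Q)} = \int_{\eta^{-1}Q\setminus \overline T} w^\eta\, \nu^\eta + a_\eta \int_{S_1} w^\eta\, d\sigma,
\end{equation*}
where $\nu^\eta = \eta^2 - 2\eta a_\eta \delta_{S_{1/2\eta}}$ is the mean-zero measure from \eqref{eq:nu_a0}. The identity $\int \nu^\eta = 0$ (the very reason for the specific choice of $a_\eta$) allows me to replace $w^\eta$ by $w^\eta - m$ for any constant $m$ in the first integral. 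Taking $m := \fint_{S_{1/2\eta}} w^\eta\, d\sigma$ kills the delta-contribution, and Cauchy--Schwarz together with $|E|^{1/2} \le \eta^{-1}$ give
\begin{equation*}
\left|\int_{\eta^{-1}Q\setminus \overline T} w^\eta \nu^\eta\right| = \left|\eta^2 \int_{\eta^{-1}Q\setminus \overline T}(w^\eta - m)\right| \le \eta \, \|\tilde w^\eta - m\|_{L^2(\eta^{-1}Q)},
\end{equation*}
where $\tilde w^\eta$ denotes the zero-extension of $w^\eta$ onto $T$ (valid since $w^\eta|_{\partial T} = 0$).

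The central technical estimate is therefore $\|\tilde w^\eta - m\|_{L^2(\eta^{-1}Q)} \le C\eta^{-1}\|\nabla w^\eta\|_{L^2}$ with $C$ independent of $\eta$. I would insert the torus mean $\bar w := \fint_{\eta^{-1}Q} \tilde w^\eta$: Poincar\'e--Wirtinger on the torus $\eta^{-1}Q_\fp$ directly gives $\|\tilde w^\eta - \bar w\|_{L^2} \le C\eta^{-1}\|\nabla w^\eta\|_{L^2}$, so it remains to bound $|\bar w - m|$ by $C\|\nabla w^\eta\|_{L^2}$ with $C$ independent of $\eta$. For this I would pass through the intermediate average $\beta := \fint_{B_{1/2\eta}} \tilde w^\eta$: the gap $|\bar w - \beta|$ is controlled by $|B_{1/2\eta}|^{-1/2}\|\tilde w^\eta - \bar w\|_{L^2(\eta^{-1}Q)}$, and since $|B_{1/2\eta}|$ and $|\eta^{-1}Q|$ are comparable the factor $|B_{1/2\eta}|^{-1/2} = O(\eta)$ cancels the $\eta^{-1}$ coming from Poincar\'e--Wirtinger; the gap $|\beta - m|$ is controlled by the scale-invariant trace-Poincar\'e estimate $\|f - \fint_{B_r} f\|_{L^2(S_r)} \le C r^{1/2}\|\nabla f\|_{L^2(B_r)}$ (verified by rescaling $x \mapsto x/r$ in the usual trace theorem on the unit ball), which upon dividing by $|S_r|^{1/2} \sim r^{1/2}$ yields the required dimensionless bound.

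The $S_1$-boundary term is easier: since $\overline T \supset \overline B_{1/16}$ is a fixed set inside $B_{1/2}$ and $w^\eta|_{\partial T} = 0$, the standard Poincar\'e inequality on the fixed bounded domain $B_1 \setminus \overline T$ together with the trace theorem $H^1(B_1\setminus\overline T) \hookrightarrow L^2(S_1)$ yield $|a_\eta \int_{S_1} w^\eta| \le C\|\nabla w^\eta\|_{L^2}$ (using $a_\eta \le 1/(2\pi)$). Assembling, I obtain $\|\nabla w^\eta\|^2_{L^2} \le C\|\nabla w^\eta\|_{L^2}$, which gives the desired uniform bound. The hard part is the $\eta$-independent comparison of averages across vastly different scales (the torus $\eta^{-1}Q$, the inscribed ball $B_{1/2\eta}$, the sphere $S_{1/2\eta}$, and the fixed hole $T$); the geometry that $B_{1/2\eta}$ is the inscribed ball of $\eta^{-1}Q$ and the scale-invariance of trace-Poincar\'e on balls are decisive, and the whole chain is triggered by the zero-mass property of $\nu^\eta$ built into the definition of $a_\eta$.
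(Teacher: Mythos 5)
Your proof is correct, but it handles the key $\nu^\eta$-term by a genuinely different route than the paper. Both proofs start from the energy identity $\|\nabla w^\eta\|^2_{L^2(\eta^{-1}Q)} = a_\eta\delta_{S_1}(w^\eta) + \nu^\eta(w^\eta)$, both treat the $S_1$-piece identically by trace plus a Poincar\'e inequality using $w^\eta|_{\partial T}=0$, and both exploit the zero-mass property $\langle\nu^\eta,1\rangle=0$ to subtract a constant for free. The paper then \emph{rescales} the pairing back to the $\eta$-dependent reference domain $Y_{f,\eta}=Q\setminus\eta\overline T$, views $1-2a_\eta\delta_{S_{1/2}}$ as an element of $H^{-1}(Y_{f,\eta})$ with uniformly bounded norm, and concludes via Poincar\'e--Wirtinger on $Y_{f,\eta}$ together with the $d=2$ scale invariance of $\|\nabla\cdot\|_{L^2}$. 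You instead stay entirely in the large torus $\eta^{-1}Q_{\fp}$ and its inscribed ball $B_{1/2\eta}$: you pick the subtracted constant to be the \emph{sphere} average over $S_{1/2\eta}$, which makes the delta part of $\nu^\eta$ vanish outright, and then control the leftover volume term by Cauchy--Schwarz, Poincar\'e--Wirtinger on the torus, and a chain of average comparisons (torus mean, inscribed-ball mean, sphere mean) whose $\eta$-dependence cancels exactly thanks to the scale-invariant trace-Poincar\'e estimate $\|f-\fint_{B_r}f\|_{L^2(S_r)}\le Cr^{1/2}\|\nabla f\|_{L^2(B_r)}$ in $d=2$. The paper's version is shorter; yours is a bit longer but works only with the torus and a ball, thereby avoiding the (true but unremarked in the paper) need to check that the Poincar\'e--Wirtinger constant and the $H^{-1}$-norm on the $\eta$-dependent domain $Y_{f,\eta}$ remain bounded as the hole shrinks to a point. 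Both arrive at $\|\nabla w^\eta\|^2_{L^2}\le C\|\nabla w^\eta\|_{L^2}$ and hence the uniform bound.
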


\begin{proof} Integrating on both sides of \eqref{eq:cell_w2} against $w^\eta$, we get
\begin{equation}
\label{eq:w1}
\|\nabla w^\eta\|^2_{L^2(\eta^{-1}Q)} = a_\eta\delta_{S_1}(w^\eta) + \nu^\eta(w^\eta).
\end{equation}
For the first term above, we find
\begin{equation}
\label{eq:w2}
\left|\delta_{S_1}(w^\eta)\right| \le C\|w^\eta\|_{L^2(S_1)} \le C\|\nabla w^\eta\|_{L^2(B_2)},
\end{equation}
where we used the usual trace inequality for $H^1$ functions, and the Poincar\'e inequality \eqref{eq:poincare}, and noted that the constants involved are uniform in $\eta$. For the second term, recall that $Y_{f,\eta}$ refers to the set $Q\setminus \eta\ol T$. Then in view of \eqref{eq:nu_a0}, we have
\begin{equation*}
\begin{aligned}
\langle \nu^\eta, w^\eta \rangle_{H^{-1}(\eta^{-1}Y_{f,\eta}),H^1(\eta^{-1}(Y_{f,\eta}))} &= \left\langle \nu^\eta, w^\eta - \fint_{\eta^{-1}Y_{f,\eta}} w^\eta \right\rangle_{H^{-1}(\eta^{-1}Y_{f,\eta}), H^1(\eta^{-1}Y_{f,\eta})}\\
&= \int_{Y_{f,\eta}} \left(1-2a_\eta \delta_{S_{1/2}}\right)[w_\eta - \fint_{Y_{f,\eta}} w_\eta].
\end{aligned}
\end{equation*}
It is clear that $1-2 a_\eta \delta_{S_{1/2}}$ an element in $H^{-1}(Y_{f,\eta})$ and its norm can be bounded uniformly in $\eta$. It follows from the Poincar\'e-Wirtinger inequality that
\begin{equation}
\label{eq:w3}
\left|\langle \nu^\eta, w^\eta \rangle_{H^{-1},H^1} \right| \le C\left\|  w_\eta - \fint_{Y_{f,\eta}} w_\eta \right\|_{H^1(Y_{f,\eta})} \le C\|\nabla w_\eta\|_{L^2(Y_{f,\eta})} = C\|\nabla w^\eta\|_{L^2(\frac{1}{\eta}Y_{f,\eta})}.
\end{equation}
The last step is due to scaling invariance of the $\|\nabla\varphi\|_{L^2}$ norm for $d = 2$.

Combine \eqref{eq:w1}, \eqref{eq:w2} and \eqref{eq:w3}, and recall that $w^\eta$ is extended to $T$ by zero. We finally obtain
\begin{equation}
\label{eq:w4}
\|\nabla w^\eta\|_{L^2(\eta^{-1}Q)} \le C,
\end{equation}
where $C$ is independent of $\eta$. The proof is hence complete.
\end{proof}



\medskip

To build oscillating test functions, we consider a rescaled version of $\chi^\eta$ and define
\begin{equation*}
v^\eps(x) :=  \frac{1}{|\log \eta|} \chi^\eta\left(\frac{x}{\eps \eta}\right), \qquad x \in \eps(Q \setminus \eta \overline{T}).
\end{equation*}
Again, we extend $v^\eps$ to zero in $\eta \ol T$ and extend it further periodically in each cubes of $\eps \Z^d$. Then $v^\eps$ solves the rescaled cell problem:
\begin{equation}
\label{eq:ce2}
\left\{
\begin{aligned}
&-\Delta v^\eps(x) = \frac{1}{|\log \eta|\eps^2}  = \frac{1}{\sigma_\eps^2}, &\quad &x \in \eps \R^2_f,\\
&v^\eps(x) = 0, &\quad &x \in \partial (\eps \R^2_f).
\end{aligned}
\right.
\end{equation}
We have the following properties for $v^\eps$.

\begin{lemma}
\label{lem:celld2} Assume {\upshape ({P1})}, $d= 2$ and $a_\eps \ll \eps$, i.e. $\eta(\eps) = a_\eps/\eps \ll 1$. Let $v^\eps$ be as in \eqref{eq:ce2}. Then for each set $K$ compactly supported in $\R^2$, there exists $C > 0$ that depends only on $d$, $T$ and $K$ such that
\begin{equation}
\label{eq:celld21}
\|\nabla v^\eps\|_{L^2(K)} \le C(\eps|\log \eta|^{\frac12})^{-1}= C\sigma_\eps^{-1}.
\end{equation}
Moreover, we have
\begin{equation}
\label{eq:celld22}
\left\|v^\eps - 1/2\pi\right\|_{L^2(K)} \le C|\log \eta(\eps)|^{-\frac12}.
\end{equation}
Finally, if $\alpha \sim \alpha^\eps_*$, i.e. $\lim_{\eps \to 0} \sigma_\eps \in (0,\infty)$, then
\begin{equation}
\label{eq:celld23}
\nabla v^\eps \rightharpoonup 0 \qquad \text{ weakly in } L^2(K).
\end{equation}

\end{lemma}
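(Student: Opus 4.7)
The plan is to mirror the structure of the $d\ge 3$ proof of Lemma \ref{lem:cell_1}, replacing the uniform $H^1$ bound \eqref{eq:cell_C} on $\chi^\eta$ (which fails in two dimensions) by the decomposition $\chi^\eta = w^\eta + \Phi^\eta$ from the preceding lemma, where $\|\nabla w^\eta\|_{L^2(\eta^{-1}Q)}\le C$ and $\Phi^\eta$ is explicit.

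For \eqref{eq:celld21}, I would write $\nabla v^\eps(x) = (|\log\eta|\eps\eta)^{-1}(\nabla \chi^\eta)(x/(\eps\eta))$, cover $K$ by $O(\eps^{-2})$ copies of $\eps Q$, and change variables $y = x/(\eps\eta)$ on each to reduce to the energy bound $\|\nabla\chi^\eta\|_{L^2(\eta^{-1}Q)}^2 \le C|\log\eta|$ from \eqref{eq:en_d2}. The prefactors combine to give $\|\nabla v^\eps\|_{L^2(K)}^2 \le C\eps^{-2}|\log\eta|^{-1} = C\sigma_\eps^{-2}$.

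For \eqref{eq:celld22}, the same periodic rescaling used in \eqref{eq:c_conv} reduces the claim to
\begin{equation*}
\eta^2 \int_{\eta^{-1}Q} \left|\frac{\chi^\eta(y)}{|\log\eta|} - \frac{1}{2\pi}\right|^2 dy \le \frac{C}{|\log\eta|}.
\end{equation*}
Splitting $\chi^\eta = w^\eta + \Phi^\eta$, the $w^\eta$ contribution is controlled by rescaling the Poincar\'e inequality \eqref{eq:poincare} from the unit cube with hole $\eta T$ to the cube $\eta^{-1}Q$ with hole $T$: the resulting Poincar\'e constant is $\eta^{-1}|\log\eta|^{1/2}$, so $\|w^\eta\|_{L^2(\eta^{-1}Q)} \le C\eta^{-1}|\log\eta|^{1/2}$ and its contribution is bounded by $C/|\log\eta|$. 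The $\Phi^\eta$ contribution is purely explicit; the key identity is $a_\eta\to 1/(2\pi)$, which comes from the specific choice $a_\eta = (1-\eta^2|T|)/(2\pi)$. The region $B_1$ contributes $O(\eta^2)$, and the outer region $\eta^{-1}Q\setminus B_{1/(2\eta)}$ contributes $O(|\log\eta|^{-2})$ since $a_\eta\log(1/(2\eta))/|\log\eta| = 1/(2\pi) + O(|\log\eta|^{-1})$. The annular region $B_{1/(2\eta)}\setminus B_1$ is the delicate one; in polar coordinates, the substitution $s = \log r/|\log\eta|$ transforms the integral into $2\pi|\log\eta|\int_0^{s_*} |a_\eta s - 1/(2\pi)|^2 \eta^{-2s}\, ds$ with $s_* = 1 - \log 2/|\log\eta|$. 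The integrand concentrates at $s = s_*$; exploiting $|a_\eta s_* - 1/(2\pi)| = O(|\log\eta|^{-1})$ together with exponential decay of $\eta^{-2(s_*-s)}$ away from $s_*$ gives a bound of $O(\eta^{-2}|\log\eta|^{-2})$ for this integral, and multiplying by $\eta^2$ yields $O(|\log\eta|^{-2})$. Summing the three pieces, taking square roots, and absorbing the sum over cells gives \eqref{eq:celld22}.

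For \eqref{eq:celld23}, the argument is a direct analog of the Riemann--Lebesgue-type computation at the end of the proof of Lemma \ref{lem:cell_1}. Since $\sigma_\eps \sim 1$, part (1) gives a uniform $L^2(K)$ bound on $\nabla v^\eps$, hence a weakly convergent subsequence. For a smooth test function $\psi \in C^\infty_c(K)$, I would split $\int_K \partial_j v^\eps\,\psi$ into integrals over $\eps$-periodic cubes and replace $\psi$ on each cube by its value $\psi(\eps k)$ at the center plus a Taylor remainder. The leading term vanishes by the $\eps$-periodicity of $v^\eps$, and the remainders sum to $O(\eps\|\nabla\psi\|_\infty \|\nabla v^\eps\|_{L^2(K)}) = O(\eps)$, which tends to zero; hence the weak limit is zero. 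The main obstacle is the careful bookkeeping of the $\Phi^\eta$ contribution in \eqref{eq:celld22}: both $\chi^\eta/|\log\eta|$ and $1/(2\pi)$ are of the same order throughout most of $\eta^{-1}Q$, so the required extra $|\log\eta|^{-1}$ factor must be extracted from the precise choice of $a_\eta$ and from the asymptotic concentration of the annular integral near $|x| = 1/(2\eta)$.
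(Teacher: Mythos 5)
Your proposal is correct and follows essentially the same route as the paper: decompose $\chi^\eta = w^\eta + \Phi^\eta$, control the $w^\eta$ part via the uniform bound $\|\nabla w^\eta\|_{L^2(\eta^{-1}Q)}\le C$ together with the rescaled Poincar\'e inequality (giving the dominant $O(|\log\eta|^{-1})$ contribution), and estimate the explicit $\Phi^\eta$ part directly. The differences are organizational: you rescale the full sum to a single reference cell $\eta^{-1}Q$ rather than working $\eps$-cell by $\eps$-cell, you apply the Poincar\'e inequality to $w^\eta$ directly rather than splitting it into mean plus Poincar\'e--Wirtinger fluctuation as the paper does, and your treatment of the annular piece of $\Phi^\eta$ via the substitution $s=\log r/|\log\eta|$ with exponential concentration near $s_*$ is more explicit than the paper's terse comparison (which as written contains a dubious pointwise inequality, though the $O(\eps^2/|\log\eta|^2)$ conclusion is still right). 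Your cleaner per-cell Cauchy--Schwarz for the Taylor remainder in part (3), giving $O(\eps\|\nabla\psi\|_\infty\|\nabla v^\eps\|_{L^2(K)})$, is also equivalent to the paper's $r_\eps$ bookkeeping. All three estimates check out.
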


\begin{proof} We first observe that \eqref{eq:celld21} and \eqref{eq:celld23} can be proved as in the $d \ge 3$ setting, using the two dimensional versions of the Poincar'e inequality in \eqref{eq:poincare} and the energy estimates \eqref{eq:en_d2}. Consider the proof of \eqref{eq:celld23} for instance, the control $r_\eps$ term in the proof of Lemma \ref{lem:cell_1} becomes, in a typical cell $\eps\square$,
\begin{equation*}
\begin{aligned}
|r_\eps| &\le |\log \eta|^{-1}(\eps \eta)^2 \|D\psi\|_{L^\infty} \|\nabla \chi^\eta\|_{L^2(\eta^{-1}\square)} |\eta^{-1}\square|^\frac12\\
&\le C\eps^2\left(\eta|\log \eta|^{-\frac12}\right).
\end{aligned}
\end{equation*}
This is sufficient for the proof.

The proof of \eqref{eq:celld22} need major modifications in $d=2$. Since $a_\eta$ converges to $1/2\pi$ with error of order $\eta^2$, we only need to estimate $\|v^\eps-a_\eta\|_{L^2(K)}$. As in the proof of Lemma \ref{eq:cell_01}, we use periodicity and reduce the estimates essentially to a rescaled reference cell $\eps Q$. 

In the $\eps$-cell $\eps Q$, extend $v^\eps$ by zero inside $\eps \eta T$. Recall the definitions of $\Phi^\eta$ and $w^\eta$ in \eqref{eq:Phi2}. Then direct computation shows that
\begin{equation}
\label{eq:celld2_0}
v^\eps(x) - a_\eta = \zeta_{\eta,\eps} + \frac{1}{|\log \eta|} \left[ \left(w^\eta\left(\frac{x}{\eps \eta}\right) - \ol w_{\eta,\eps} \right) + \ol w_{\eta,\eps} \right].
\end{equation}
Here, $\zeta_{\eta,\eps}$ is an $\eps Q$-periodic function defined by 
\begin{equation*}
\zeta_{\eta,\eps}(x) := \frac{1}{|\log \eta|} \Phi^\eta\left(\frac{x}{\eps \eta}\right) - a_\eta = \begin{cases}
-a_\eta &\qquad x \in \ol{B}_{\eps\eta},\\
a_\eta|\log \eta|^{-1} \log\left|\frac{x}{\eps}\right| &\qquad x \in \eps\eta(B_{1/2\eta} \setminus \ol B_1),\\
a_\eta |\log 2||\log \eta|^{-1} &\qquad x \in \eps(\square\setminus B_{1/2}),
\end{cases}
\end{equation*}
and $\ol w_{\eta,\eps}$ is a short-hand notation for the average $\fint_{\eps Q} w^\eta(\cdot/\eps\eta)$ in a typical $\eps$-cell. Moreover, by simple scaling check it is clear that $\ol w_{\eta,\eps}$ is the same as the average $\int_{\frac1{\eta}Q} w^\eta$ in the $\frac{1}{\eta}$-cell. Hence, $\ol w_{\eta,\eps}$ satisfies the estimate
\begin{equation*}
|\ol w_{\eta,\eps}|  = \left| \fint_{\eta^{-1}Q} w^\eta(y)\, dy \right| 
\le \frac{\|w^\eta\|_{L^2(\eta^{-1}Q)}}{|\eta^{-1}Q|^{\frac12}} \le \frac{(1/\eta)|\log \eta|^{\frac12}\|\nabla w^\eta\|_{L^2(\eta^{-1}Q)}}{|\eta^{-1}Q|^{\frac12}}.
\end{equation*}
In the last step, we used the Poincar\'e inequality \eqref{eq:poincare}. It follows that, in a typical $\eps$-cell, we have
\begin{equation}
\label{eq:celld2_1}
\|\overline{w}_{\eta,\eps}\|^2_{L^2(\eps Q)} = |\eps Q| |\overline{w}_{\eta,\eps}|^2 \le \eps^2 |\log \eta|\|\nabla w^\eta\|^2_{L^2(\eta^{-1} Q)} \le C\eps^2 |\log \eta|.
\end{equation}
To get the last inequality, we used Lemma \ref{lem:celld2}. This controls the last item on the right hand side of \eqref{eq:celld2_0}. Hence, $\||\log\eta|^{-1}\overline{w}_{\eta,\eps}\|^2_{L^2(\eps Q)}$ is of order $C\eps^2|\log \eta|^{-1}$.

\smallskip

To control the oscillations of $w^\eta(\cdot/\eps\eta)$, we apply the Poincar\'e-Whirtinger inequality and obtain
\begin{equation}
\label{eq:celld2_2}
\begin{aligned}
\|w^\eta(\cdot/\eps\eta) - \overline{w}_{\eta,\eps}\|^2_{L^2(\eps Q)} &= (\eps\eta)^d\|w^\eta(\cdot)- \textstyle\fint_{\eta^{-1} Q} w^\eta\|^2_{L^2(\eta^{-1} Q)}\\
&\le(\eps\eta)^d \eta^{-2}\|\nabla w^\eta\|^2_{L^2(\eta^{-1} Q)} \le C\eps^2.
\end{aligned}
\end{equation}
Hence, $\||\log\eta|^{-1}(w^\eta(\cdot/\eps\eta) - \overline{w}_{\eta,\eps})\|^2_{L^2(\eps Q)}$ is of order $\eps^2|\log \eta|^{-2}$.

\smallskip

Finally, for the $\zeta_{\eta,\eps}$ term, we write $\|\zeta_{\eta,\eps}\|^2_{L^2(\eps Q)}$ as the following sum:
\begin{equation*}
 \|a_\eta\|^2_{L^2(B_{\eps\eta})} + \|a_\eta |\log \eta|^{-1}\log(|\cdot/\eps|)\|^2_{L^2(B_{\eps/2}\setminus \ol B_{\eps\eta})} +  \|a_\eta|\log 2| |\log \eta|^{-1}\|^2_{L^2(\eps Q \setminus \ol B_{\eps/2})}.
\end{equation*}
Then the first term is of order $\eta^2 \eps^2$, and the third term is of order $\eps^2|\log \eta|^{-2}$. For the second term of the above summation, we compute directly and get
\begin{equation*}
\|\log(|\cdot/\eps|)\|^2_{L^2(\eps Q)} \le \eps^2 \|\log (|x|+1)\|^2_{L^2(Q)} \le C\eps^2.
\end{equation*}
Hence, $\|a_\eta |\log \eta|^{-1}\log(|\cdot/\eps|)\|^2_{L^2(B_{\eps/2}\setminus B_{\eps\eta})}$ is of order $\eps^2 |\log \eta|^{-2}$.

The above estimates hold uniformly for all $\eps$-cubes that have non-empty intersections with $K$, and the total number of such cubes is of order $\eps^{-2}$. We hence obtain
\begin{equation}
\|v^\eps - a_\eta\|^2_{L^2(K)} \le C|\log \eta|^{-1}.
\end{equation}
The constant $C$ depends only on $T$ and $K$. This completes the proof of \eqref{eq:celld22}.
\end{proof}



\section*{Acknowledgments}

The author is grateful to Yi-Hsuan Lin, Yong Lu and Christophe Prange for helpful discussions. The work of the author has been supported by the Recruitment Program of Global Experts of China and by the National Natural Science Foundation of China under Grant No.\,11701314.

\appendix

\section{Some useful lemmas}

\medskip


\subsection{A Poincar\'e inequality}

\medskip

We first record a Poincar\'e inequality for $H^1$ functions that vanishes in a set. This inequality is used heavily in our analysis.

\begin{theorem} Assume that $0<a < R$ and $\ol B_a \subset Q_R$. Then there exists a positive constant $C$ depending only on $d$, such that for any $u \in H^1(Q_R)$ satisfying $u = 0$ in $B_a$, we have
\begin{equation}
\label{eq:poincare}
\|u\|_{L^2(Q_R)} \le \begin{cases}
CR\left(\frac{R}{a}\right)^{\frac{d-2}{2}} \|\nabla u\|_{L^2(Q_R)}, \qquad &d\ge 3,\\
CR(\log (R/a))^{\frac12}\|\nabla u\|_{L^2(Q_R)}, \qquad &d=2.
\end{cases}
\end{equation}
\end{theorem}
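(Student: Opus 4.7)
The plan is to reduce by scaling to the unit cube, then exploit a radial integration from the boundary of the excluded ball. Set $\tilde u(x) = u(Rx)$, which lies in $H^1(Q_1)$ and vanishes on $B_{a/R}$. A direct computation shows that the inequality for radius $R$ and parameter $a$ is equivalent to the inequality for radius $1$ and parameter $a/R$, so it suffices to prove
\begin{equation*}
\|\tilde u\|_{L^2(Q_1)} \le C\, \phi_d(a/R)\, \|\nabla \tilde u\|_{L^2(Q_1)},
\end{equation*}
where $\phi_d(t) = t^{-(d-2)/2}$ for $d \ge 3$ and $\phi_2(t) = (\log(1/t))^{1/2}$.

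To prove the reduced inequality, I would work in polar coordinates $(r,\omega) \in (0,\infty) \times S^{d-1}$ centered at the origin. For each direction $\omega$, let $L(\omega)$ denote the distance from $0$ to $\partial Q_1$ along the ray, so $1/2 \le L(\omega) \le \sqrt{d}/2$. Since $\tilde u$ vanishes on $B_{a/R}$, for each $r \in (a/R, L(\omega))$ the fundamental theorem of calculus gives
\begin{equation*}
\tilde u(r\omega) = \int_{a/R}^{r} \partial_s \tilde u(s\omega)\, ds.
\end{equation*}
The key step is to apply Cauchy--Schwarz after inserting the measure factor $s^{d-1}$:
\begin{equation*}
|\tilde u(r\omega)|^2 \le \left(\int_{a/R}^{r} |\partial_s \tilde u(s\omega)|^2 s^{d-1}\, ds\right) \left(\int_{a/R}^{r} s^{-(d-1)}\, ds\right).
\end{equation*}
The second factor is where the dimension-dependent blow-up enters: it is bounded above by $(d-2)^{-1}(a/R)^{-(d-2)}$ when $d \ge 3$ and by $\log(L(\omega) R/a) \le C \log(R/a)$ when $d=2$. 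The first factor is bounded by $\int_0^{L(\omega)} |\partial_s \tilde u(s\omega)|^2 s^{d-1}\, ds$, which is independent of $r$.

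The remaining step is routine: multiply through by $r^{d-1}$, integrate $r$ from $0$ to $L(\omega)$ (using that $\tilde u$ vanishes on $B_{a/R}$ to restrict attention to $r > a/R$), then integrate over $\omega \in S^{d-1}$. The radial $r$-integration contributes a bounded factor $L(\omega)^d/d \le C_d$, and the remaining double integral reassembles into $\|\nabla \tilde u\|_{L^2(Q_1)}^2$ after bounding $|\partial_s \tilde u| \le |\nabla \tilde u|$ and using $s^{d-1}\,ds\,d\omega = dx$. Scaling back yields the claimed bound with the explicit $R$-dependence.

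I do not anticipate any genuine obstacle here; the one mild subtlety is that $Q_1$ is a cube rather than a ball, but this is handled cleanly by introducing $L(\omega)$ and noting that it is uniformly bounded above and below by constants depending only on $d$. The argument is essentially a weighted one-dimensional Hardy-type inequality applied ray-by-ray, and the logarithmic divergence of $\int s^{-(d-1)} ds$ in $d=2$ is exactly what produces the two distinct regimes in the statement.
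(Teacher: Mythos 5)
Your proof is correct and follows essentially the same route as the paper: integrate radially from the excluded ball via the fundamental theorem of calculus, apply Cauchy--Schwarz with the Jacobian weight $s^{d-1}$, and observe that the second factor $\int s^{-(d-1)}\,ds$ produces the $a^{-(d-2)}$ blow-up for $d\ge 3$ and the logarithm for $d=2$. The only cosmetic differences are that you scale to the unit cube first and handle the cube geometry directly through $L(\omega)$, whereas the paper simply replaces $Q_R$ by $B_R$ and carries the $r$-dependence through a Fubini step before bounding.
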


Clearly, the theorem still hold if the cube is changed to a ball and/or the ball is changed to a cube. This theorem is standard and a proof can be found in \cite{Allaire91-2}. We record the proof here for the convenience of the reader.

\begin{proof} We prove the theorem with $Q_R$ is replaced by $B_R$. By density of smooth functions in $H^1$, we may assume that $u$ is $C^1$. For any point $x \in Q_R\setminus \ol B_a$, we have
\begin{equation*}
	u(x) = u(re) = \int_a^r e\cdot \nabla u(se) ds.
\end{equation*}
Using H\"older inequality, we then have the estimate
\begin{equation*}
	|u(x)|^2 \le \left(\int_a^r |\nabla u(se)|^2 s^{d-1}ds\right) \int_a^r \frac{1}{s^{d-1} }ds.
\end{equation*}
For $d = 2$, the explicit computation
\begin{equation}
\label{eq:p_ec}
	\int_a^r \frac{1}{s^{d-1} }ds = \log r - \log a
\end{equation}
facilitates the estimates below:
\begin{equation*}
\begin{aligned}
\|u\|^2_{L^2(B_R\setminus \ol B_a)} \le& \int_{S^{d-1}} d\sigma_e \int_a^R r^{d-1}\left[\left(\int_a^r |\nabla u(se)|^2 s^{d-1}ds\right) \log\left(\frac{r}{a}\right)\right] dr\\
=& \int_{S^{d-1}} d\sigma_e \int_a^R \left(\int_s^R r\log\left(\frac{r}{a}\right) \,dr\right) \,|\nabla u(se)|^2 s^{d-1}ds\\
\le& \int_{S^{d-1}} d\sigma_e \int_a^R \left(\frac{R^2}{2}\log \frac{R}{a} - \frac{s^2}{2}\log \frac{s}{a}\right) \, |\nabla u(se)|^2 s^{d-1}\, ds\\
\le& \left(\frac{R^2}{2}\log \frac{R}{a} \right)\int_{S^{d-1}} d\sigma_e \int_a^R  \, |\nabla u(se)|^2 s^{d-1}\,ds\\
=&\left(\frac{R^2}{2}\log \frac{R}{a} \right)\|\nabla u\|^2_{L^2(B_R\setminus \ol B_a)}.
\end{aligned}
\end{equation*}
This completes the proof for $d=2$. 

For $d \ge 3$, the explicit computation \eqref{eq:p_ec} is replaced by
\begin{equation*}
	\int_a^r \frac{1}{s^{d-1} }ds = \frac{1}{d-2}\frac{1}{r^{d-2}}\left[ \left(\frac{r}{a}\right)^{d-2}-1 \right] \le \frac{1}{d-2}\frac{1}{r^{d-2}}\left(\frac{r}{a}\right)^{d-2}.
\end{equation*}
We then have
\begin{equation*}
\begin{aligned}
\|u\|^2_{L^2} \le& 
\int_{S^{d-1}} d\sigma_e \int_a^R \left(\int_s^R \frac{r}{d-2}\left(\frac{r}{a}\right)^{d-2} \,dr \right) |\nabla u(se)|^2 s^{d-1} ds\\
\le& \int_{S^{d-1}} d\sigma_e \int_a^R \frac{a^2}{d(d-2)}\left(\frac{R}{a}\right)^d \,|\nabla u(se)|^2 s^{d-1} ds\\
=&\frac{1}{d(d-2)}R^2\left(\frac{R}{a}\right)^{d-2}\|\nabla u\|^2_{L^2}.
\end{aligned}
\end{equation*}
This completes the proof for $d\ge 3$.
\end{proof}

\begin{remark} An easy consequence of the above theorem is, if $v \in H^1_0(D^\eps)$, and $D^\eps$ is the perforated domain satisfying assumption {\upshape(P1)}, then one has
\begin{equation*}
\|\tilde u^\eps\|_{L^2(D)} \le C\sigma_\eps \|\nabla \tilde u^\eps\|_{L^2(D)}.
\end{equation*}
This inequality is most frequently used in this paper and it is verified in the proof of Lemma \ref{lem:u_ee}.
\end{remark}

\medskip

\subsection{Periodic potential theory}
\label{app:lp}

In section \ref{sec:error}, we used \emph{periodic layer potentials} to solve the cell problem \eqref{eq:cell_2} that is posed on the rescaled cube $\frac1\eta Q$, and quantified the convergence of its mean value as $\eta \to 0$. Those potentials are natural generalizations of the classical layer potential operators associated to the Laplace operator in the whole space.

The starting point is to specify the fundamental solution to the Laplace operator in the unit periodic cell, or equivalently in the flat torus $Q_\fp = \R^d/\Z^d$. We seek a solution to
\begin{equation*}
\Delta G(x,y) = \delta_y(x) - 1 \qquad \text{in } Q_\fp,
\end{equation*}
in the distributional sense. Note that the volume of $Q_\fp$ is subtracted on the right hand side; this is necessary for solving the problem, since $Q_\fp$ is a compact manifold without boundary.
Clearly, the solution is unique only up to an additive constant. We hence impose the condition
\begin{equation}
\int_Q G(x,y)dx = 0.
\end{equation}
The existence and uniqueness of $G$ is then classical. As in Remark \ref{rem:periodic}, we view $G$ both as defined on the torus, or periodically over the whole space, in fact the product of them with the diagonal removed. It is easy to check that $G(x;y) = G_0(x-y)$ where $G_0 = G(\cdot;0)$. For notational simplicity, we still use $G$ for $G_0$. It well know that $G(x)$ is a smooth function in $x$ except at the origin, and $G$ has the following form:
\begin{equation*}
 G(x) = \Gamma(x) + R(x), \qquad x\in Q\setminus \{0\}.  
\end{equation*}  
Here, $\Gamma: \R^d\setminus \{0\} \to \R$ is the fundamental solution of the Laplace operator in the whole space given by the formulae:
\begin{equation}
\Gamma(x) = \begin{cases}
\frac{1}{(n-2)c_d} \frac{1}{|x|^{d-2}}, \qquad d \ge 3,\\
-\frac{1}{2\pi}\log |x|, \qquad d = 2,
\end{cases}
\end{equation}
where $c_d$ is the volume of the unit sphere in $\R^d$. Clearly, $R$ is the solution to the equation
\begin{equation*}
\Delta R(x) = -1 \quad\text{in } Q,
\end{equation*}
with proper boundary conditions so that $R + \Gamma$ is periodic. Standard PDE theory then shows that $R$ is smooth in $\ol Q$. Note $R$ itself does not satisfy periodic conditions at $\partial Q$.

To form periodic potentials in the $\eta^{-1}$-periodic cell $\eta^{-1}Q_{\fp}$, we rescale $G$ and define $G^\eta(x;y) = \eta^{d-2}G(\eta(x-y))$, for $x,y \in \frac{1}{\eta}Q$ and $x\ne y$. It is the unique periodic fundamental solution to
\begin{equation}
\label{eq:Geta}
\Delta G^\eta(x;y) = \delta_y(x) - \eta^d \quad\text{in } \frac1\eta Q_\fp \qquad\text{and} \qquad \int_{\frac1\eta Q} G^\eta(x;y) = 0.
\end{equation}
Then $G^\eta$ has the decomposition formula:
\begin{equation}
\label{eq:Geta_d}
G^\eta(x) = \Gamma(x) + \eta^{d-2}R(\eta x), \qquad x\in \eta^{-1}Q\setminus \{0\}.
\end{equation}

\medskip

For each fixed $\eta > 0$, we define the periodic single layer potential operator on $\frac{1}{\eta} Q_\fp$ by
\begin{equation}
\label{eq:psingle}
\cS^{(\eta)}_{T,\fp} [\varphi](x) := \int_{\partial T} G^\eta(x-y)\varphi(y) d\sigma(y), \qquad x\in \frac{1}{\eta}Q_\fp \setminus \partial T,
\end{equation}
and define the periodic double layer potential by
\begin{equation}
\label{eq:pdouble}
\cD^{(\eta)}_{T,\fp} [\varphi](x) := \int_{\partial T} \nu_y \cdot \nabla G^\eta(x-y) \varphi(y) d\sigma(y), \qquad x \in \frac{1}{\eta} Q_\fp \setminus \partial T,
\end{equation}
where $\varphi \in L^2(\partial T)$. In this paper, we only treat the case where the set $T$ has regular boundary, i.e. under the following assumption.
\begin{itemize}
\item[(T1)] $T$ is open bounded and \emph{connected} and $\partial T$ is $C^{1,\alpha}$, for some $\alpha \in (0,1)$.
\end{itemize}

In view of \eqref{eq:Geta_d}, the integrals above are regular integrals. It is easy to check that, $\cD^{(\eta)}_{T,\fp}[\varphi]$ is harmonic in $T$ and in $\eta^{-1}Q_\fp\setminus \ol T$ for any $\varphi \in L^2(\partial T)$, and the same holds for $\cS^{(\eta)}_{T,\fp}[\psi]$ but only if the further condition $\int_{\partial T} \psi = 0$ is imposed. We denote by $L^2_0(\partial T)$ the mean zero subspace of $L^2(\partial T)$.

In view of the decomposition formula \eqref{eq:Geta_d}, we find that, in some sense, the periodic potentials are ``perturbations'' to the classical potentials associated to the Laplace operator in the whole space. More precisely,
\begin{equation*}
\cS^{(\eta)}_{T,\fp}[\varphi] = \cS_T[\varphi] + \eta^{d-2} \cR_1[\varphi], \qquad \cD^{(\eta)}_{T,\fp}[\varphi] = \cD_T[\varphi] + \eta^{d-1} \cR_2[\varphi], \quad \text{in } \quad \frac{1}{\eta} Q_\fp\setminus \partial T.
\end{equation*}
where $\cS_T$ and $\cD_T$ are the classical single and double layer potentials respectively, defined similarly to \eqref{eq:psingle} and \eqref{eq:pdouble} with $G^\eta$ replaced by $\Gamma$. The operators $\cR_1$ and $\cR_2$ are well defined by
\begin{equation}
\label{eq:Rijdef}
\cR_1[\varphi](x) = \int_{\partial T} R(\eta(x-y))\varphi(y)d\sigma_y \quad \text{and} \quad \cR_2[\varphi](x) = \int_{\partial T} \nu_y \cdot \nabla R(\eta(x-y)) \varphi(y) d\sigma_y.
\end{equation}
In view of the smoothness of $R$, the operators $\cR_1$ and $\cR_2$ are smoothing.

We refer to \cite{Folland,AmmKan} for some practical guide to layer potentials techniques in solving Laplace equations, and to \cite{FabJod,Verchota} for some original developments. Many results there for the classical layer potentials can be carried out almost in parallel for periodic potentials. Here are some basic results.

\begin{theorem}\label{thm:dlp} Assume {\upshape(T1)}. Then for any $\psi \in L^2(\partial T)$, the following results hold.
\begin{enumerate}
\item The traces of the double layer potential $\cD^{(\eta)}_{T,\fp}[\varphi]$ from the outer and inner side of $\partial T$ exist and satisfy the jump relation
\begin{equation}
\label{eq:PNop}
\left.\cD^{(\eta)}_{T,\fp}[\psi] \right\rvert_{\pm}(x) = \lim_{t \to 0+} \cD^{(\eta)}_{T,\fp}[\psi](x+t\nu_x) = \mp \frac12 \psi(x) + \cK^{(\eta)}_{T,\fp}[\psi](x), \qquad x \in \partial T
\end{equation} 
where $\cK^{(\eta)}_{T,\fp}$ is the generalized Neumann-Poincar\'e operator defined by
\begin{equation}
\cK^{(\eta)}_{T,\fp}[\psi](x) = \int_{\partial T} \nu_x \cdot \nabla_y G^{\eta}(x-y) \psi(y) dy.
\end{equation}
For the single layer potential, we have
\begin{equation}
\cS^{(\eta)}_{T,\fp}[\psi] \big\lvert_+(x) =  \cS^{(\eta)}_{T,\fp}[\psi] \big\lvert_-(x), \qquad x \in \partial T.
\end{equation}
\item The inner and outer normal derivatives, that is
\begin{equation*}
\left.\frac{\partial}{\partial \nu}\right\rvert_\pm \cD^{(\eta)}_{T,\fp}[\psi](x) = \lim_{t\to 0+} \nu_x \cdot \nabla \cD^{(\eta)}_{T,\fp}[\psi](x+t\nu_x), \qquad x \in \partial T
\end{equation*}
exist and agree on $\partial T$. On the other hand, for the single layer potential, we have
\begin{equation}
\left.\frac{\partial}{\partial \nu}\right\rvert_\pm \cS^{(\eta)}_{T,\fp}[\psi](x) = \pm \frac{1}{2} \psi(x) + (\cK^{(\eta)}_{T,\fp})^*[\varphi](x), \qquad x \in \partial T.
\end{equation}
\end{enumerate}
\end{theorem}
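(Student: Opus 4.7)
The plan is to exploit the decomposition
\begin{equation*}
G^\eta(x) = \Gamma(x) + \eta^{d-2} R(\eta x), \qquad x \in \eta^{-1}Q \setminus \{0\},
\end{equation*}
with $R \in C^\infty(\ol Q)$ inherited from \eqref{eq:Geta_d}. This yields the operator-level identities
\begin{equation*}
\cD^{(\eta)}_{T,\fp}[\psi] = \cD_T[\psi] + \eta^{d-1}\cR_2[\psi], \qquad \cS^{(\eta)}_{T,\fp}[\psi] = \cS_T[\psi] + \eta^{d-2}\cR_1[\psi]
\end{equation*}
on $\eta^{-1}Q_\fp \setminus \partial T$, together with $\cK^{(\eta)}_{T,\fp}[\psi] = \cK_T[\psi] + \eta^{d-1}\cR_2[\psi]|_{\partial T}$. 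Since $\Gamma$ carries all singular behavior at the diagonal, the proof splits cleanly into two steps: (i) invoke the classical trace formulas for $\cD_T$ and $\cS_T$, and (ii) verify that the remainder operators contribute no jump.

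For step (i), hypothesis (T1) ensures $\partial T$ is $C^{1,\alpha}$, and the classical results of Verchota and Fabes--Jodeit (see \cite{FabJod,Verchota}, or the treatments in \cite{Folland,AmmKan}) provide the standard nontangential jump relations
\begin{equation*}
\cD_T[\psi]\bigr|_\pm(x) = \mp \tfrac12 \psi(x) + \cK_T[\psi](x), \qquad \cS_T[\psi]\bigr|_+(x) = \cS_T[\psi]\bigr|_-(x),
\end{equation*}
together with continuity of $\partial_\nu \cD_T[\psi]$ across $\partial T$ and the adjoint jump
\begin{equation*}
\partial_\nu \cS_T[\psi]\bigr|_\pm(x) = \pm \tfrac12 \psi(x) + \cK_T^*[\psi](x),
\end{equation*}
all interpreted in $L^2(\partial T)$.

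For step (ii), I would observe that $\ol T \subset B_{1/4}$ implies $\eta(x-y) \in B_{1/2} \subset Q$ for all $\eta \in (0,1]$ and all $(x,y)$ in a fixed tubular neighborhood $U$ of $\ol T$. Since $R$ is $C^\infty$ on $\ol Q$, both $R(\eta(x-y))$ and $\nu_y \cdot \nabla R(\eta(x-y))$ are smooth on $U\times U$. Consequently the remainder integrals $\cR_1[\psi](x)$ and $\cR_2[\psi](x)$ extend to smooth functions of $x$ in a neighborhood of $\partial T$, so both these functions and their normal derivatives extend continuously across $\partial T$ and introduce no jump.

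Combining steps (i) and (ii): the boundary traces of $\cD^{(\eta)}_{T,\fp}[\psi]$ from either side are the corresponding classical traces of $\cD_T[\psi]$ plus the continuous value $\eta^{d-1}\cR_2[\psi]|_{\partial T}$, and the latter bundles with $\cK_T[\psi]$ precisely to form $\cK^{(\eta)}_{T,\fp}[\psi]$, yielding the claimed formula \eqref{eq:PNop}. The single-layer continuity and the adjoint formula for $\partial_\nu \cS^{(\eta)}_{T,\fp}[\psi]$ follow by the same bookkeeping, with $(\cK^{(\eta)}_{T,\fp})^* = \cK_T^* + \eta^{d-1}\cR_2^*$, while continuity of $\partial_\nu \cD^{(\eta)}_{T,\fp}[\psi]$ across $\partial T$ is immediate since each summand is continuous. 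The only technically substantive ingredient imported from the literature is the nontangential trace theory on $C^{1,\alpha}$ domains; everything specific to the periodic setting is absorbed into the smooth remainder and is elementary.
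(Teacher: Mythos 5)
Your proposal is correct and takes essentially the same route the paper has in mind: the paper, immediately after the theorem, writes the perturbation identity $\cK^\eta_T[\varphi] = \cK_T[\varphi] + \eta^{d-1}\cR_2[\varphi]$ using the decomposition $G^\eta = \Gamma + \eta^{d-2}R(\eta\cdot)$, and states the theorem ``can be proved easily following the usual arguments.'' Your step-by-step argument—invoke the classical jump relations on $C^{1,\alpha}$ domains for $\cS_T$, $\cD_T$, then note that the remainder operators $\cR_1$, $\cR_2$ have smooth kernels (since $R$ is smooth on $\ol{Q}$ and $\eta(x-y)$ stays in a compact subset of $Q$ for $x,y$ near $\ol T$) and hence contribute continuously, so no new jump arises—is precisely the intended justification, spelled out in slightly more detail than the paper gives.
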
 

Here and in the sequel, the adjoint of an operator $\mathcal{A}$ is denoted by $\mathcal A^*$. For simplicity, we use the short-hand notation $\cK^\eta_T$ for $\cK^{(\eta)}_{T,\fp}$; the short-hand notations $\cS^\eta_T$ and $\cD^\eta_T$ are understood similarly. The theorem above can be proved easily following the usual arguments; see in particular \cite[Section 2.8]{AmmKan}. In view of \eqref{eq:Geta_d} again, $\cK^\eta_T$ can be viewed as a perturbation to the classical Neumann-Poincar\'e operator $\cK_T$, which is defined as in \eqref{eq:PNop} with $G^\eta$ replaced by $\Gamma$. More precisely, we have
\begin{equation}
\label{eq:NPop-d}
 \cK^\eta_T [\varphi] = \cK_T[\varphi] + \eta^{d-1}\cR_2[\varphi], \qquad \text{on } \partial T.
\end{equation}
Here, $\cR_2$ is defined as before but with $x \in \partial T$; it well defined in view of the smoothness of $\partial T$. It is a different operator but we abuse notations and still denote it by $\cR_2$. 

Thanks to the regularity of $\partial T$, the integrals in $\cK_T$ and $\cK^\eta_T$ are regular. Those operators map $L^2(\partial T)$ to $H^1(\partial T)$ and hence are compact in $L^2(\partial T)$. It follows that $\pm \frac12 I + \cK_T$ is Fredholm; the same statement holds for $-\frac12 I + \cK^\eta_T$ and for their adjoint operators. Those results can be found in e.g. \cite[Chapters 1 and 3]{Folland}.

\medskip


Finally, we present some further mapping properties regarding the Neumann-Poincar\'e operators.

\begin{theorem}\label{thm:layerp} Assume {\upshape(T1)} and $d\ge 3$. Then the following results hold.
\begin{enumerate}
\item $\mathrm{Ker}(-\frac12 I + \cK_T)$ is one dimensional and spanned by the constant function $\psi_* \equiv 1$.
\item $\mathrm{Ker}(-\frac12 I + \cK_T^*)$ is one dimensional and spanned by the unique function $\varphi_* \in L^2(\partial T)$ which satisfies: $(\varphi_*,a_*)$ is the unique pair of elements in $L^2(\partial T)\times \R$ verifying
\begin{equation*}
\mathcal S_T [\varphi_*] + a_* = 0 \quad \text{in } \R^d\setminus \overline{T}\qquad\text{and}\qquad \int_{\partial T} \varphi_* = 1.
\end{equation*}
Here, $\cS_T$ is understood as the trace of the single layer potentials from the exterior of $T$. 
\item The decomposition $L^2(\partial T) = \mathrm{Ran}(-\frac12 I + \cK_T) \oplus \mathrm{Ker}(-\frac12 I + \cK_T)$ holds.
\item The operator $-\frac12 I + \cK_T : L^2_0(\partial T) \to \mathrm{Ran}(-\frac12 I + \cK_T)$ is invertible.
\end{enumerate}
\end{theorem}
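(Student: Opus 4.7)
The plan is to invoke the Fredholm alternative for compact perturbations of the identity on $L^2(\partial T)$ (the operator $\cK_T$ is compact thanks to the $C^{1,\alpha}$ regularity of $\partial T$) and to translate each kernel/range statement for $-\tfrac12 I + \cK_T$ into a Dirichlet/Neumann uniqueness statement for harmonic functions in $T$ and in $\R^d \setminus \ol T$, the latter using decay at infinity in $\scD^{1,2}(\R^d)$ exactly as in the proof of Lemma~\ref{lem:celld3}; this is where the hypothesis $d\ge 3$ enters.

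For item (1), the inclusion $1 \in \ker(-\tfrac12 I + \cK_T)$ follows from the direct computation $\cD_T[1] \equiv 1$ in $T$ and $\equiv 0$ in $\R^d \setminus \ol T$ together with the jump formula \eqref{eq:PNop}, which gives $\cK_T[1] = \tfrac12$. Conversely, for $\psi$ in the kernel I would set $u := \cD_T[\psi]$: the exterior jump reads $u|_+ = 0$, so the harmonicity and decay of $u$ in $\R^d \setminus \ol T$ force $u \equiv 0$ outside; by Theorem~\ref{thm:dlp}(2) the normal derivative of $u$ is continuous, so $\partial_\nu u|_- = 0$, hence $u$ is constant in $T$; and the interior jump $u|_- = \tfrac12 \psi + \cK_T[\psi] = \psi$ then forces $\psi$ to be constant on $\partial T$. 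The geometry $\ol B_{1/16} \subset T \subset B_{1/4}$ makes $\partial T$ connected, so $\psi \in \mathrm{span}\{1\}$.

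For item (2), the Fredholm alternative immediately gives $\dim \ker(-\tfrac12 I + \cK_T^*) = 1$. To identify its generator I would take a nonzero $\varphi$ in the co-kernel; the jump formula for $\partial_\nu \cS_T$ yields $\partial_\nu \cS_T[\varphi]|_- = 0$, so $\cS_T[\varphi]$ is harmonic with zero Neumann data in $T$, hence constant, equal (by continuity of the single layer) to some $C$ on $\partial T$. In the exterior $\cS_T[\varphi]$ is harmonic, equals $C$ on $\partial T$, and decays at infinity, so by exterior Dirichlet uniqueness it coincides with $C\,w$, with $w$ as in \eqref{eq:exterior}. Matching the asymptotics $\cS_T[\varphi](x) \sim \Gamma(x)\int_{\partial T}\varphi$ against $w(x) \sim \mathrm{Cap}(T)\,\Gamma(x)$ at infinity gives $C\,\mathrm{Cap}(T) = \int_{\partial T}\varphi$, so the normalization $\int_{\partial T}\varphi_* = 1$ both pins down $\varphi_*$ uniquely and identifies the associated constant as $a_* = -C = -1/\mathrm{Cap}(T)$.

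For (3) and (4), the Fredholm alternative further gives $\mathrm{Ran}(-\tfrac12 I + \cK_T) = \{f \in L^2(\partial T) : \langle f, \varphi_*\rangle = 0\}$, closed of codimension one. Since $\langle 1, \varphi_*\rangle = \int_{\partial T}\varphi_* = 1 \ne 0$, the constant function $1$ does not lie in the range, so $\ker \cap \mathrm{Ran} = \{0\}$ and the topological direct sum decomposition in (3) follows by dimension count. For (4), injectivity of the restriction to $L^2_0(\partial T)$ holds because $\ker = \mathrm{span}\{1\}$ has non-zero mean, and surjectivity onto $\mathrm{Ran}$ follows by writing any $\psi \in L^2$ as $\psi_0 + c$ with $\psi_0 \in L^2_0$ and observing $(-\tfrac12 I + \cK_T)\psi = (-\tfrac12 I + \cK_T)\psi_0$; the bounded inverse then comes from the open mapping theorem. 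The main technical point is the far-field matching in (2): once one accepts the capacitary asymptotics $w(x) \sim \mathrm{Cap}(T)\,\Gamma(x)$ from \eqref{eq:capdef} and the exterior uniqueness established in Lemma~\ref{lem:celld3}, the rest is essentially Fredholm bookkeeping.
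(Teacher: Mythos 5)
Your reconstruction is correct and fills in the details that the paper intentionally leaves to the literature: the paper's ``proof'' of this theorem is simply a set of citations (items 1 and 3 to Folland, item 2 to Ammari--Kang, item 4 dismissed as ``Fredholm theory plus injectivity''), whereas you give a self-contained derivation via layer potentials, the Fredholm alternative, and exterior uniqueness. Both are the same classical route; what you buy is transparency about where each hypothesis is used, in particular that $d\ge 3$ enters only through decay and uniqueness in $\scD^{1,2}$ for the exterior problem, and that the far-field matching $\cS_T[\varphi_*]\sim\Gamma\int\varphi_*$ against $w\sim \mathrm{Cap}(T)\,\Gamma$ is what ties the normalization $\int\varphi_*=1$ to the capacity constant $a_*$. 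Your handling of items (3) and (4) (codimension-one range plus $\langle 1,\varphi_*\rangle\ne 0$, then open mapping) is exactly the ``Fredholm bookkeeping'' the paper alludes to.

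One small correction in item (1): the dimension count does not rest on $\partial T$ being connected, and the containment $\ol B_{1/16}\subset T\subset B_{1/4}$ does not force $\partial T$ to be connected (a $C^{1,\alpha}$ set with a small cavity in the annular region would still satisfy both inclusions). What is actually used is that $T$ itself is connected, per assumption (T1): the potential $u=\cD_T[\psi]$ is harmonic with zero Neumann data in $T$, hence equal to a \emph{single} constant throughout the connected set $T$, and then $u|_-=\psi$ forces $\psi$ to be that same constant on all of $\partial T$. (If the exterior has a bounded cavity component, $u$ still vanishes there by the maximum principle, so the rest of the argument is unchanged.) The paper's remark after the theorem makes precisely this point -- the one-dimensionality of the kernels is tied to the connectedness of $T$, not its boundary. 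Also, your computed sign $a_*=-1/\mathrm{Cap}(T)$ is the one consistent with the paper's own (positive) convention for $\Gamma$; the opposite sign quoted in Corollary~\ref{cor:Gam} comes from the convention $\Delta\Gamma=\delta$ used in the cited reference, so be aware of the mismatch rather than treating it as an error in your argument.
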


The results above are classical: item one is proved in \cite[Chapter 3]{Folland}; the characterization in item two can be found in \cite[Theorem 2.26]{AmmKan}; item three is proved in \cite[Corollary 3.39]{Folland}. Using Fredholm theory, the fourth term can be easily proved by showing the injectivity of the operator. We remark that the dimensions determined above are due to the assumption that $T$ is connected, which is imposed only for simplicity. If $T$ has multiple connected components, the above theorem can be generalized accordingly.

\begin{corollary}\label{cor:Gam}
Assume {\upshape(T1)} and $d\ge 3$. Then, for any $h \in L^2(\partial T)$, it can be written as
\begin{equation*}
h = h_1 + h_0, \qquad h_0 : = \langle h,\varphi_* \rangle_{L^2,L^2} \in \R \quad\text{and}\quad h_1 \in \mathrm{Ran}(-\frac{1}{2}I + \cK_T).
\end{equation*}
In particular, $\langle \Gamma\big\vert_{\partial T}, \varphi_* \rangle = - \frac{1}{\mathrm{Cap}(T)}$.
\end{corollary}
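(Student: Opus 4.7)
Both assertions rest on the Fredholm/potential-theoretic package provided by Theorem~\ref{thm:layerp}. For the decomposition, item~(3) gives the direct sum
\begin{equation*}
L^2(\partial T) = \mathrm{Ran}(-\tfrac12 I + \cK_T) \oplus \mathrm{Ker}(-\tfrac12 I + \cK_T),
\end{equation*}
and item~(1) identifies the kernel as the one-dimensional span of $\psi_* \equiv 1$. Hence any $h \in L^2(\partial T)$ writes uniquely as $h = h_1 + c$ with $h_1 \in \mathrm{Ran}(-\tfrac12 I + \cK_T)$ and $c \in \R$. The plan is to read off $c$ by pairing against $\varphi_*$: writing $h_1 = (-\tfrac12 I + \cK_T)u$ for some $u \in L^2(\partial T)$, duality and item~(2) give
\begin{equation*}
\langle h_1,\varphi_*\rangle = \langle u,(-\tfrac12 I + \cK_T^*)\varphi_*\rangle = 0,
\end{equation*}
while $\langle 1,\varphi_*\rangle = \int_{\partial T}\varphi_* = 1$ by the normalization in item~(2). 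Therefore $c = \langle h,\varphi_*\rangle$, which is exactly the claimed $h_0$.

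For the capacity identity, I will recognize $\langle \Gamma|_{\partial T},\varphi_*\rangle$ as the value of the single-layer potential $\cS_T[\varphi_*]$ at the origin. Since $(-\tfrac12 I + \cK_T^*)\varphi_* = 0$, the single-layer jump relation of Theorem~\ref{thm:dlp} gives the vanishing interior normal derivative $\partial_\nu \cS_T[\varphi_*]|_{-} = -\tfrac12 \varphi_* + \cK_T^*\varphi_* = 0$ on $\partial T$, so $\cS_T[\varphi_*]$ is a constant $c_0$ on $T$ by connectedness of $T$. Using the evenness of $\Gamma$ and the fact that $0 \in T$ (guaranteed by \eqref{eq:refhole}),
\begin{equation*}
c_0 \,=\, \cS_T[\varphi_*](0) \,=\, \int_{\partial T}\Gamma(-y)\varphi_*(y)\,d\sigma_y \,=\, \langle \Gamma|_{\partial T},\varphi_*\rangle.
\end{equation*}
It remains to identify $c_0 = -1/\mathrm{Cap}(T)$. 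By continuity of single layers, $\cS_T[\varphi_*] = c_0$ on $\partial T$; on $\R^d \setminus \ol T$ the function is harmonic, lies in $\scD^{1,2}(\R^d)$, and decays at infinity since $d \ge 3$. The exterior solution $w$ from \eqref{eq:exterior} shares these properties up to the multiplier $c_0$, so the uniqueness argument already used in the proof of Lemma~\ref{lem:celld3} yields $\cS_T[\varphi_*] = c_0 w$ on $\R^d\setminus\ol T$. The exterior jump relation then gives $\varphi_* = \partial_\nu \cS_T[\varphi_*]|_+ = c_0\,\partial_\nu w|_+$ on $\partial T$; integrating and invoking Green's identity in the form $\int_{\partial T}\partial_\nu w|_+\,d\sigma = -\int_{\R^d\setminus\ol T}|\nabla w|^2 = -\mathrm{Cap}(T)$ (the minus sign coming from the outward normal of $\R^d\setminus\ol T$ being $-\nu$), together with $\int_{\partial T}\varphi_* = 1$, forces $c_0 = -1/\mathrm{Cap}(T)$.

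The main obstacle is purely bookkeeping of normal orientations, particularly in Green's identity over the unbounded exterior region, where the outward normal of $\R^d\setminus\ol T$ is $-\nu$ and the $|x|^{2-d}$ decay of $w$ must be invoked to kill the boundary term at infinity; this is the one spot where a sign slip would flip the final identity.
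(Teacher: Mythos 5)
Your decomposition argument mirrors the paper's (direct sum from item~(3), kernel spanned by $1$ from item~(1), and the pairing computation via duality and $\langle 1,\varphi_*\rangle=1$ to isolate the constant component), so that half is the same approach, just spelled out in a bit more detail.

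For the capacity identity you take a genuinely different, more self-contained route. The paper reads the constant value of $\cS_T[\varphi_*]$ directly from the characterization in Theorem~\ref{thm:layerp}(2), namely $\cS_T[\varphi_*]+a_*=0$ in $\R^d\setminus\ol T$, then simply cites the known fact $a_*=1/\mathrm{Cap}(T)$ from the literature and finishes by evaluating $\cS_T[\varphi_*](0)$ using symmetry of $\Gamma$ and $0\in T$. You instead derive everything from scratch: you use $(-\tfrac12 I+\cK_T^*)\varphi_*=0$ and the interior jump relation to see that $\cS_T[\varphi_*]$ has zero Neumann data in $T$, hence is a constant $c_0$ there by connectedness; you identify the exterior part with $c_0 w$ by the uniqueness argument of Lemma~\ref{lem:celld3}; and you then compute $c_0$ via the exterior jump relation plus Green's identity and the normalization $\int_{\partial T}\varphi_*=1$. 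Your route avoids the external citation of $a_*=1/\mathrm{Cap}(T)$ and makes the sign structure explicit, at the cost of being longer and requiring care about the orientation of normals in the exterior Green identity (which you flag correctly). One cosmetic remark: you never explicitly rule out $c_0=0$ before dividing in the exterior jump step, although as you arrange it ($\cS_T[\varphi_*]=c_0 w$, then $\varphi_*=c_0\,\partial_\nu w|_+$, then integrate) the argument goes through with $c_0=0$ leading directly to the contradiction $1=\int_{\partial T}\varphi_*=0$, so no real gap results.
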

\begin{proof} The decomposition of $h$ follows from item two of the previous theorem. To find the constant component $h_0$, we use the fact that $\mathrm{Ran}(-\frac12 I + \cK_T)$ is closed (by Fredholm theory) and that it is orthogonal to $\mathrm{Ker}(-\frac12 I + \cK_T^*)$. By item three of the previous theorem, we get the result.

To find $\langle\Gamma\big\rvert_{\partial T},\varphi_*\rangle$, we first recall the fact that $a_* = 1/\mathrm{Cap}(T)$; see e.g. \cite{AmmKan}. Also, by the continuity of $\cS_T[\varphi_*]$ across $\partial T$ and that $\cS_T[\varphi_*]$ is harmonic in $T$, we conclude that $\cS_T[\varphi_*] = -a_*$ in $T$. Then by definition and the symmetry of $\Gamma(\cdot,\cdot)$, we have
\begin{equation*}
\langle\Gamma\big\rvert_{\partial T},\varphi_*\rangle = \int_{\partial T} \Gamma(0-y)\varphi_*(y) d\sigma_y = \cS_T[\varphi_*](0) = -a_* = -\frac{1}{\mathrm{Cap}(T)}. 
\end{equation*}
This completes the proof.
\end{proof}

Finally, we prove the following mapping property for the periodic double layer potential.

\begin{theorem}\label{thm:map_dl} Assume {\upshape(T1)} and $d\ge 3$. Then $\mathrm{Ran}(-\frac12 I + \cK^\eta_T) = L^2(\partial T)$. In particular, 
\begin{equation*}
(-\frac12 I + \cK^\eta_T)^{-1}[1] = (-\eta^d|T|)^{-1}.
\end{equation*}
\end{theorem}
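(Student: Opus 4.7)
The plan is to deduce surjectivity of $-\frac12 I + \cK^\eta_T$ from the Fredholm alternative after establishing injectivity, and then to identify the specific inverse image of the constant function $1$ by a direct computation using the divergence theorem.

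For the Fredholm setup, since $\partial T$ is $C^{1,\alpha}$ and $R$ is smooth near the origin, both $\cK_T$ and $\cR_2$ are compact on $L^2(\partial T)$; by the decomposition $\cK^\eta_T = \cK_T + \eta^{d-1}\cR_2$ from \eqref{eq:NPop-d}, the operator $-\frac12 I + \cK^\eta_T$ is a compact perturbation of $-\frac12 I$ and hence Fredholm of index $0$. It therefore suffices to prove injectivity.

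For the injectivity step, I would assume $\psi \in L^2(\partial T)$ satisfies $(-\frac12 I + \cK^\eta_T)[\psi] = 0$ and look at $u := \cD^{(\eta)}_{T,\fp}[\psi]$ on the flat torus $\eta^{-1}Q_\fp$. Since $\Delta_x G^\eta(x-y) = -\eta^d$ is constant away from the diagonal, differentiation under the integral gives $\Delta u \equiv 0$ on $\eta^{-1}Q_\fp \setminus \partial T$. The jump relation \eqref{eq:PNop} combined with our hypothesis gives $u\rvert_+ \equiv 0$ on $\partial T$. The exterior region $\eta^{-1}Q_\fp \setminus \ol T$ is an open bounded subset of the torus whose boundary is exactly $\partial T$ (the torus has no boundary at infinity), so the maximum principle forces $u \equiv 0$ on $\eta^{-1}Q_\fp \setminus \ol T$. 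Hence $\partial u/\partial \nu\rvert_+ = 0$, and by the continuity of the normal derivative of the double layer (item~(2) of Theorem~\ref{thm:dlp}) also $\partial u/\partial \nu\rvert_- = 0$. Since $T$ is connected and $u$ is harmonic in $T$ with vanishing Neumann data, $u\rvert_T$ is some constant $c$, and then the jump relation gives $\psi = u\rvert_- - u\rvert_+ = c$: the kernel reduces to constants.

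To close the injectivity argument and to compute $(-\frac12 I + \cK^\eta_T)^{-1}[1]$ in one stroke, I would evaluate $(-\frac12 I + \cK^\eta_T)[1]$ explicitly. Taking $\psi \equiv 1$ in the definition of $\cD^{(\eta)}_{T,\fp}$ and applying the divergence theorem on $T$ with the outer normal, for $x \notin \ol T$ one has
\begin{equation*}
\cD^{(\eta)}_{T,\fp}[1](x) = \int_{\partial T} \nu_y \cdot \nabla_y G^\eta(x-y)\, d\sigma(y) = \int_T \Delta_y G^\eta(x-y)\, dy = -\eta^d |T|,
\end{equation*}
since $\Delta_y G^\eta(x-y) = -\eta^d$ for $y \in T$ and $x \notin \ol T$. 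Letting $x$ approach $\partial T$ from outside and using \eqref{eq:PNop} yields $(-\frac12 I + \cK^\eta_T)[1] = -\eta^d|T|$, which forces $c=0$ in the kernel analysis and gives the explicit formula $(-\frac12 I + \cK^\eta_T)^{-1}[1] = (-\eta^d|T|)^{-1}$. The main technical subtlety lies in the injectivity step, specifically in verifying that the maximum principle applies on the exterior of $\ol T$ inside the torus without spurious boundary contributions, which is the reason for passing to the compact manifold formulation via $\eta^{-1}Q_\fp$.
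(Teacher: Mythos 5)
Your proposal is correct, but it is genuinely dual to the paper's argument rather than the same one. The paper proves surjectivity directly by showing $\mathrm{Ker}(-\tfrac12 I + (\cK^\eta_T)^*) = \{0\}$ via the \emph{single} layer potential $\cS^\eta_T[\varphi]$: it first deduces $\int_{\partial T}\varphi = 0$ from the computation $\cK^\eta_T[1] = \tfrac12 - \eta^d|T|$ (this is needed because $\cS^\eta_T[\varphi]$ is harmonic away from $\partial T$ \emph{only} when $\varphi$ has mean zero, since $\Delta \cS^\eta_T[\varphi] = -\eta^d\int_{\partial T}\varphi$), and then runs an energy/integration-by-parts argument on both sides of $\partial T$ to conclude $\varphi = 0$. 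You instead show $\mathrm{Ker}(-\tfrac12 I + \cK^\eta_T) = \{0\}$ via the \emph{double} layer potential $u = \cD^\eta_T[\psi]$ together with index-zero Fredholmness. Your route has the pleasant feature that $\cD^\eta_T[\psi]$ is automatically harmonic on the torus minus $\partial T$ because the constant $-\eta^d$ is annihilated by $\nu_y\cdot\nabla_y$, so the mean-zero reduction the paper needs is absorbed for free; the constant coming out of the kernel is then killed by the same divergence-theorem computation the paper uses. One small caveat: where you invoke ``the maximum principle'' on $\eta^{-1}Q_\fp\setminus\ol T$ with $L^2(\partial T)$ Dirichlet data, what is really being used is uniqueness for the $L^2$ Dirichlet problem on a bounded $C^{1,\alpha}$ domain (nontangential boundary values, not continuity up to the closure); this is standard and could alternatively be replaced by the same energy identity the paper uses, namely $\int_{\eta^{-1}Q_\fp\setminus\ol T}|\nabla u|^2 = -\int_{\partial T} u\rvert_+\,\partial_\nu u\rvert_+ = 0$. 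With that clarification the proof is sound, and it offers a slightly leaner path than the paper's adjoint-plus-single-layer approach.
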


\begin{proof} First, by direct computation we check that $\cK^\eta_T[1] = \frac12 -\eta^d|T|$, so we only need to prove the first statement of the theorem. By the compactness of $\cK^\eta_T$ in $L^2(\partial T)$, the range of $-\frac12 I + \cK^\eta_T$ is closed and equals the orthogonal complement of $\mathrm{Ker}(-\frac12 I + (\cK^\eta_T)^*)$. Take an arbitrary element $\varphi$ from this kernel, then we have
\begin{equation*}
0 = \int_{\partial T} (-\frac12 I + (\cK^\eta_T)^*)[\varphi] = \langle (-\frac12 I + \cK^\eta_T)[1], \varphi\rangle_{L^2,L^2} = -\eta^d|T| \int_{\partial T} \varphi.
\end{equation*}
We conclude that $\varphi \in L^2_0(\partial T)$. It follows that $\cS^\eta_T[\varphi]$ is harmonic in $\frac{1}{\eta}Q_\fp \setminus \ol T$ and in $T$. Let $u_i$ and $u_e$ denote the restriction of $\cS^\eta_T[\varphi]$ in and outside $T$. We then have
\begin{equation*}
\int_T |\nabla u_i|^2 = \int_{\partial T} u_i \left.\frac{\partial u_i}{\partial \nu}\right\rvert_- = \int_{\partial T} u_i \left(-\frac12 I + (\cK^\eta_T)^*\right)[\varphi] = 0.
\end{equation*}
Hence, $u_i = C$ is a constant function. Similarly, we have
\begin{equation*}
\begin{aligned}
\int_T |\nabla u_e|^2 &= -\int_{\partial T} u_e \left.\frac{\partial u_i}{\partial \nu}\right\rvert_+ = -\int_{\partial T} u_e \left(\frac12 I + (\cK^\eta_T)^*\right)[\varphi]\\
& = -\int_{\partial T} u_e \varphi = -\int_{\partial T} u_i \varphi = -C\int_{\partial T} \varphi = 0.
\end{aligned}
\end{equation*}
In the second line above, we used the continuity of $\cS^\eta_T [\varphi]$ across $\partial T$, and that $\varphi \in L^2_0$. It follows that $u_e = u_i = C$, and by the jump relation in Theorem \ref{thm:dlp} we get $\varphi = \partial_\nu u_e\big\rvert_+ - \partial_\nu u_i\big\rvert_- = 0$. We conclude that $\mathrm{Ker}(-\frac12 I + (\cK^\eta_T)^*)$ contains only $0$, and that $\mathrm{Ran}(-\frac12 I + \cK^\eta_T)$ is the whole space.
\end{proof}

\bibliographystyle{abbrv}
\bibliography{j_ph}

\begin{thebibliography}{10}

\bibitem{Allaire91-1}
G.~Allaire.
\newblock Homogenization of the {N}avier-{S}tokes equations in open sets
  perforated with tiny holes. {I}. {A}bstract framework, a volume distribution
  of holes.
\newblock {\em Arch. Rational Mech. Anal.}, 113(3):209--259, 1990.

\bibitem{Allaire91-2}
G.~Allaire.
\newblock Homogenization of the {N}avier-{S}tokes equations in open sets
  perforated with tiny holes. {II}. {N}oncritical sizes of the holes for a
  volume distribution and a surface distribution of holes.
\newblock {\em Arch. Rational Mech. Anal.}, 113(3):261--298, 1990.

\bibitem{Allaire-3}
G.~Allaire.
\newblock Continuity of the {D}arcy's law in the low-volume fraction limit.
\newblock {\em Ann. Scuola Norm. Sup. Pisa Cl. Sci. (4)}, 18(4):475--499, 1991.

\bibitem{AllMur}
G.~Allaire and F.~Murat.
\newblock Homogenization of the {N}eumann problem with nonisolated holes.
\newblock {\em Asymptotic Anal.}, 7(2):81--95, 1993.
\newblock With an appendix written jointly with A. K. Nandakumar.

\bibitem{AGGJS}
H.~Ammari, J.~Garnier, L.~Giovangigli, W.~Jing, and J.-K. Seo.
\newblock Spectroscopic imaging of a dilute cell suspension.
\newblock {\em J. Math. Pures Appl. (9)}, 105(5):603--661, 2016.

\bibitem{AmmKan}
H.~Ammari and H.~Kang.
\newblock {\em Polarization and moment tensors}, volume 162 of {\em Applied
  Mathematical Sciences}.
\newblock Springer, New York, 2007.
\newblock With applications to inverse problems and effective medium theory.

\bibitem{BLP}
A.~Bensoussan, J.-L. Lions, and G.~C. Papanicolaou.
\newblock Boundary layers and homogenization of transport processes.
\newblock {\em Publ. Res. Inst. Math. Sci.}, 15(1):53--157, 1979.

\bibitem{CafMel}
L.~A. Caffarelli and A.~Mellet.
\newblock Random homogenization of an obstacle problem.
\newblock {\em Ann. Inst. H. Poincar\'{e} Anal. Non Lin\'{e}aire},
  26(2):375--395, 2009.

\bibitem{CioMur-1}
D.~Cioranescu and F.~Murat.
\newblock Un terme \'{e}trange venu d'ailleurs.
\newblock In {\em Nonlinear partial differential equations and their
  applications. {C}oll\`ege de {F}rance {S}eminar, {V}ol. {II} ({P}aris,
  1979/1980)}, volume~60 of {\em Res. Notes in Math.}, pages 98--138, 389--390.
  Pitman, Boston, Mass.-London, 1982.

\bibitem{CioMur-2}
D.~Cioranescu and F.~Murat.
\newblock A strange term coming from nowhere [ {MR}0652509 (84e:35039a);
  {MR}0670272 (84e:35039b)].
\newblock In {\em Topics in the mathematical modelling of composite materials},
  volume~31 of {\em Progr. Nonlinear Differential Equations Appl.}, pages
  45--93. Birkh\"{a}user Boston, Boston, MA, 1997.

\bibitem{DenLio}
J.~Deny and J.~L. Lions.
\newblock Les espaces du type de {B}eppo {L}evi.
\newblock {\em Ann. Inst. Fourier, Grenoble}, 5:305--370 (1955), 1953--54.

\bibitem{FabJod}
E.~B. Fabes, M.~Jodeit, Jr., and N.~M. Rivi\`ere.
\newblock Potential techniques for boundary value problems on
  {$C^{1}$}-domains.
\newblock {\em Acta Math.}, 141(3-4):165--186, 1978.

\bibitem{FeiLu-1}
E.~Feireisl and Y.~Lu.
\newblock Homogenization of stationary {N}avier-{S}tokes equations in domains
  with tiny holes.
\newblock {\em J. Math. Fluid Mech.}, 17(2):381--392, 2015.

\bibitem{Folland}
G.~B. Folland.
\newblock {\em Introduction to partial differential equations}.
\newblock Princeton University Press, Princeton, NJ, second edition, 1995.

\bibitem{GiuHof-2}
A.~Giunti and R.~H\"offer.
\newblock Homogenization for the stokes equations in randomly perforated
  domains under almost minimal assumptions on the size of the holes.
\newblock {\em arXiv:1809.04491}, 2018.

\bibitem{GiuHof-1}
A.~Giunti, R.~H\"offer, and V.~J.~J. L.
\newblock Homogenization for the poisson equation in randomly perforated
  domains under minimal assumptions on the size of the holes.
\newblock {\em arXiv:1803.10214}, 2018.

\bibitem{Hoang}
V.~H. Ho\`ang.
\newblock Random homogenization and singular perturbations in perforated
  domains.
\newblock {\em Comm. Math. Phys.}, 214(2):411--428, 2000.

\bibitem{Hoang-2}
V.~H. Ho\`ang.
\newblock Homogenization of the {S}tokes flow with small viscosity in a
  non-periodic porous medium.
\newblock {\em Rend. Mat. Appl. (7)}, 22:223--248 (2003), 2002.

\bibitem{Jikov}
V.~V. Jikov, S.~M. Kozlov, and O.~A. Ole{\u\i}nik.
\newblock {\em Homogenization of differential operators and integral
  functionals}.
\newblock Springer-Verlag, Berlin, 1994.

\bibitem{J-CMS}
W.~Jing.
\newblock Homogenization of randomly deformed conductivity resistant membranes.
\newblock {\em Commun. Math. Sci.}, 14(5):1237--1268, 2016.

\bibitem{Kacimi_Murat}
H.~Kacimi and F.~Murat.
\newblock Estimation de l'erreur dans des probl\`emes de {D}irichlet o\`u
  apparait un terme \'{e}trange.
\newblock In {\em Partial differential equations and the calculus of
  variations, {V}ol. {II}}, volume~2 of {\em Progr. Nonlinear Differential
  Equations Appl.}, pages 661--696. Birkh\"{a}user Boston, Boston, MA, 1989.

\bibitem{Kato}
T.~Kato.
\newblock {\em Perturbation theory for linear operators}.
\newblock Classics in Mathematics. Springer-Verlag, Berlin, 1995.
\newblock Reprint of the 1980 edition.

\bibitem{Keller}
J.~B. Keller.
\newblock Darcy's law for flow in porous media and the two-space method.
\newblock In {\em Nonlinear partial differential equations in engineering and
  applied science ({P}roc. {C}onf., {U}niv. {R}hode {I}sland, {K}ingston,
  {R}.{I}., 1979)}, volume~54 of {\em Lecture Notes in Pure and Appl. Math.},
  pages 429--443. Dekker, New York, 1980.

\bibitem{Masmoudi}
N.~Masmoudi.
\newblock Homogenization of the compressible {N}avier-{S}tokes equations in a
  porous medium.
\newblock {\em ESAIM Control Optim. Calc. Var.}, 8:885--906, 2002.
\newblock A tribute to J. L. Lions.

\bibitem{Mikelic}
A.~Mikeli\'{c}.
\newblock Homogenization of nonstationary {N}avier-{S}tokes equations in a
  domain with a grained boundary.
\newblock {\em Ann. Mat. Pura Appl. (4)}, 158:167--179, 1991.

\bibitem{Monsur}
S.~Monsurr{\`o}.
\newblock Homogenization of a two-component composite with interfacial thermal
  barrier.
\newblock {\em Adv. Math. Sci. Appl.}, 13(1):43--63, 2003.

\bibitem{Sanchez}
E.~S\'{a}nchez-Palencia.
\newblock {\em Nonhomogeneous media and vibration theory}, volume 127 of {\em
  Lecture Notes in Physics}.
\newblock Springer-Verlag, Berlin-New York, 1980.

\bibitem{Tartar}
L.~Tartar.
\newblock Incompressible fluid flow in a porous media - convergence of the
  homogenization process.
\newblock {\em Appendix to \cite{Sanchez}}, pages 368--377, 1980.

\bibitem{Verchota}
G.~Verchota.
\newblock Layer potentials and regularity for the {D}irichlet problem for
  {L}aplace's equation in {L}ipschitz domains.
\newblock {\em J. Funct. Anal.}, 59(3):572--611, 1984.

\end{thebibliography}
\end{document}